\algrenewcommand{\algorithmiccomment}[1]{\hskip3em// \textit{#1}}
\DeclareMathSymbol{\mathdblquotechar}{\mathalpha}{letters}{`"}
\numberwithin{equation}{section}
\newtheorem{theorem}{Theorem}[section]
\newtheorem{lemma}[theorem]{Lemma}
\newtheorem{corollary}[theorem]{Corollary}
\newtheorem{remark}[theorem]{Remark}
\newtheorem{proposition}[theorem]{Proposition}
\newtheorem{definition}[theorem]{Definition}
\renewcommand{\epsilon}{\varepsilon}
\title[OSP and Higher Rank Signatures]{Optimal Stopping via Distribution Regression: a Higher Rank Signature Approach}
\author[Horvath]{Blanka Horvath}
\address{Blanka Horvath, Technical University of Munich, The Munich Data Science Institute, \& The Alan Turing Institute}
\email{blanka.horvath@tum.de}
\author[Lemercier]{Maud Lemercier}
\address{Maud Lemercier, University of Oxford \& The Alan Turing Institute}
\email{maud.lemercier@warwick.ac.uk}
\author[Liu]{Chong Liu}
\address{Chong Liu, ShanghaiTech University}
\email{liuchong@shanghaitech.edu.cn}
\author[Lyons]{Terry Lyons}
\address{Terry Lyons, University of Oxford \& The Alan Turing Institute}
\email{tlyons@maths.ox.ac.uk}
\author[Salvi]{Cristopher Salvi}
\address{Cristopher Salvi, Imperial College London \& The Alan Turing Institute}
\email{c.salvi@imperial.ac.uk}
\date{\today}
\begin{document}

\begin{abstract}
  Distribution Regression on path-space refers to the task of learning functions mapping the law of a stochastic process to a scalar target. The learning procedure based on the notion of path-signature, i.e. a classical transform from rough path theory, was widely used to approximate weakly continuous functionals, such as the pricing functionals of path--dependent options' payoffs. However, this approach fails for Optimal Stopping Problems arising from mathematical finance, such as the pricing of American options, because the corresponding value functions are in general discontinuous with respect to the weak topology. In this paper we develop a rigorous mathematical framework to resolve this issue by recasting an Optimal Stopping Problem as a higher order kernel mean embedding regression based on the notions of higher rank signatures of measure--valued paths and adapted topologies. The core computational component of our algorithm consists in solving a family of two--dimensional hyperbolic PDEs.
\end{abstract}

\maketitle

\noindent \textbf{Key words:} Optimal Stopping Problem, Adapted Weak Topology, Higher Rank Signatures, Kernel Regression

\noindent \textbf{MSC 2020 Classification:} Primary 60L10; Secondary 60L20, 60G40, 91G60


\section{Introduction}

The notion of signature of a path was first introduced in 1954 by K. T. Chen \cite{Chen1954iterated} and has been deployed in a variety of mathematical contexts, in particular in the study of dynamical systems driven by irregular signals. 

Given a continuous path $x:[0,T] \to \mathbb{R}^d$ with bounded variation, the signature $S(x)$ consists of the collection of iterated integrals of $x$ forming a sequence of tensors in the space $\prod_{k=0}^\infty (\mathbb{R}^d)^{\otimes k}$ defined as follows
\begin{equation}\label{eqn:signature}
	S(x) = \left(1, \int_{0<u_1<T} dx_{u_1}, ..., \int_{0<u_1<...<u_k<T} dx_{u_1}\otimes...\otimes dx_{u_k}, ... \right),
\end{equation}
where $\otimes$ denotes the standard tensor product.

Hambly and Lyons \cite{Hambly2010uniqueness} showed in 2010 that the signature characterises a path up to an equivalence relation on path-space (called tree-like equivalence), making it an injective map on the the corresponding set of equivalence classes commonly referred to as unparameterised paths. More recently, the signature has become a popular tool in the context of data science as its rich algebraic structure allows to build a powerful approximation theory for learning in presence of time series data \cite{arribas2020sig,kidger2019deep,lemercier2021siggpde}. In effect, a straightforward application of the Stone-Weierstrass theorem yields that any continuous, real-valued functions $f: K \to \mathbb{R}$ on a compact set $K$ of unparameterised (resp. parametrised) paths can be approximated uniformly well by a linear functional on $\bigoplus_{k=0}^\infty (\mathbb{R}^d)^{\otimes k}$ acting on the signature (resp. time--augmented signature) \cite{lyons2014rough}, i.e. for any $x \in K$
\begin{equation}\label{eq: universality of signature}
	f(x) \approx \langle \ell, S(x) \rangle.
\end{equation}
In the context of mathematical finance, this universal approximation property of the signature allows us to recast many option pricing problems with path-dependent payoffs as a linear regression on the expected signature of the price process. To give an idea of how this approach might work and without specifying any technical assumption for now, suppose that $f$ is the payoff function of some path-dependent option and $X$ is some price process, then the corresponding option price $\mathbb{E}[f(X)]$ can be approximated by performing linear regression on the expected signature \cite{Arribas2018pricing}:
\begin{equation}\label{eq:linear regression of expected signature}
	\mathbb{E}[f(X)] \approx \langle \ell, \mathbb{E}[S(X)] \rangle.
\end{equation}
Equation (\ref{eq:linear regression of expected signature}) can be interpreted as a regression on the law of the stochastic process $X$, and is an instance of Distribution Regression on path-space, that has been studied extensively in \cite{lemercier2021distribution} for approximating weakly continuous, real-valued functions on path-valued random variables. The core tool deployed in the above work is the signature kernel \cite{cass2020computing} defined as the inner product of two signatures and that will be extensively discussed in the sequel of this paper.

Although the above approach has been successfully applied for the pricing of financial derivatives, it has a major limitation which prevents its application to tackle an important class of optimization problems from mathametical finance, that is Optimal Stopping Problems, such as the pricing of American options.  Indeed, in such problems, the value function $v(\cdot)$ 
$$
v(\mathbb{P}) = \sup\{\mathbb{E}_{\mathbb{P}}[\gamma(X_\tau)]: \tau \text{ is stopping time} \}
$$
as a function on $\mathcal{P}(\mathcal{X})$ (the set of probability measures on pathspace $\mathcal{X}$), can neither be expressed as the supremum of expectations of continuous path--dependent functions (because stopping times are in general not continuous functions on pathspace) nor be formulated as a continuous function for the \textit{weak topology}, see a counterexample in Figure 1. This is because the weak topology completely ignores the filtration, which plays a crucial role in Optimal Stopping Problems. Hence, the Distribution Regression approach from \cite{lemercier2021distribution} cannot be used for pricing American options due to the lack of continuity of the value function $v(\cdot)$ with respect to the weak topology.\\ 
\begin{figure}
	\begin{center}
		\begin{tikzpicture}[shorten >=1pt,draw=black!50,scale=0.62]
			\draw[black, fill=black, thick] (0,0) circle (2pt);
			\draw[black, fill=black, thick] (3,.2) circle (2pt);
			\draw[black, fill=black, thick] (3,-.2) circle (2pt);
			\draw[black, thick] (0,0) -- (3,.2);
			\node () at (1.5,.4) {\footnotesize$ p=0.5 $};
			\draw[black, thick] (0,0) -- (3,-.2);
			\node () at (1.5,-.5) {\footnotesize$ p=0.5 $};
			\draw [black, thick, decorate,decoration={brace,amplitude=4pt}] (3.15,0.2) -- (3.15,-.2) node [black,midway,xshift=9pt] {\footnotesize$\frac{2}{n}$};
			\draw[black, thick] (3,.2) -- (6,1);
			\node () at (4.5,.9) {\footnotesize$ p=1 $};
			\draw[black, thick] (3,-.2) -- (6,-1);
			\node () at (4.5,-1) {\footnotesize$ p=1 $};
			\draw[black, fill=black, thick] (6,1) circle (2pt);
			\draw[black, fill=black, thick] (6,-1) circle (2pt);
			
			\draw[black, fill=black, thick] (7,0) circle (2pt);
			\draw[black, fill=black, thick] (10,0) circle (2pt);
			\draw[black, thick] (7,0) -- (10,0);
			\node () at (8.5,.25) {\footnotesize$ p=1 $};
			\draw[black, thick] (10,0) -- (13,1);
			\node () at (11.5,.90) {\footnotesize$ p=0.5 $};
			\draw[black, thick] (10,0) -- (13,-1);
			\node () at (11.5,-1.) {\footnotesize$ p=0.5 $};
			\draw[black, fill=black, thick] (13,1) circle (2pt);
			\draw[black, fill=black, thick] (13,-1) circle (2pt);
		\end{tikzpicture}
	\end{center}
	\caption{\small $\mathbb{P}_n$ (left) converges to $\mathbb{P}$ (right) weakly, but $v(\mathbb{P}_n) \nrightarrow v(\mathbb{P})$.}
	\label{fig:example}
\end{figure}
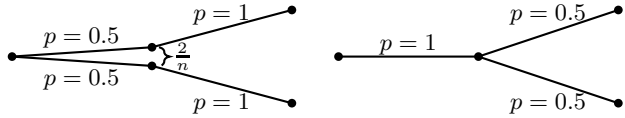
In this present paper we resolve this problem by making use of the adapted topology which was initiated by D. Aldous \cite{Aldous81adapt}, later developed by Hoover and Keisler \cite{Hoover84adapt}, and recently has been deeply studied by Beiglb\"ock et al \cite{backhoff2020all}, \cite{Backhoff2020distance},  \cite{Bartl2021Wasserstein}, \cite{Gudi2022adapted}, together with the ``higher rank signature'' approach proposed in \cite{bonnier2020adapted}. Roughly speaking, our new scheme consists of the following steps.
\begin{enumerate}
	\item \textbf{New input space and new topology:}  We adopt the suggestion from \cite{Bartl2021Wasserstein} and consider the input space (where the value function of an Optimal Stopping problem is defined) as the set of all filtered processes $\mathbb{X} = (X, \mathbb{F}, \mathbb{P})$ on discrete--time interval equipped with the so--called adapted topology $\hat \tau_1$ (i.e., the extended weak convergence introduced by \cite{Aldous81adapt}, see also \cite{bonnier2020adapted}). This topology $\hat \tau_1$ is induced by the weak convergence of the \textit{measure--valued martingale} $\hat X^1_t = \mathbb{P}[X \in \cdot|\mathcal{F}_t]$ and is stronger than the weak topology so that it can also reflect the difference of filtrations attached to underlying processes; in particular, the value function $v(\cdot)$ of the Optimal Stopping Problem now is continuous for the topology $\hat \tau_1$.
	\item \textbf{Higher rank signatures as universal features:} On the market $\mathbb{X}$ we apply the rank $2$ signature $S^2$ introduced in \cite{bonnier2020adapted}, which has the same universal properties as the usual signature of $\mathbb{R}^d$--valued paths, to the \textit{measure--valued martingale} $\hat X^1_t = \mathbb{P}[X \in \cdot|\mathcal{F}_t]$.
	\item \textbf{Distribution Regression with higher rank signatures:} we show that now the price of American option can be approximated by performing Kernel Regression on the image of (expected) rank $2$ signature $S^2$.
	
\end{enumerate}
Additionally, another contribution of the present paper is providing a feasible and explicit algorithm to implement the above theoretical framework numerically. In particular, by using the so--called signature kernel PDE trick \cite{cass2020computing}, which is based on the dynamics of signatures as collections of iterated integrals, together with the higher order kernel mean embedding method studied in \cite{Salvi2021HighRank}, we can construct a consistent estimator $\hat v(\cdot)$ for $v(\cdot)$ without using truncation of signatures, which eventually reduces the above infinite dimensional regression problem to solving a family of simple two--dimensional hyperbolic linear PDEs. \\
We summarize the main results obtained in the present paper into the following proposition, which will be proved at the end of this paper, see Appendix \ref{appendix: Proof}.
\begin{proposition}\label{prop: main result}
	Let $\mathbb{X} = (X, \mathbb{F}, \mathbb{P})$ be a discrete--time stochastic process taking values in a compact subset of $\mathbb{R}^d$ and adapted to the natural filtration $\mathbb{F}$\footnote{In fact our result also holds for general filtration, see Appendix \ref{sect: higher rank adapted topologies}.} with a underlying probabilistic model $\mathbb{P}$.  Let $v(\mathbb{X}) = \sup\{\mathbb{E}_{\mathbb{P}}[\gamma(X_\tau)]: \tau \text{ is } \mathbb{F}\text{--stopping time} \}$ be the value function of Optimal Stopping Problem attached to a bounded continuous function $\gamma$. Then we have:
	\begin{enumerate}
		\item There exists a family of consistent empirical estimator $\hat v_{(x^i)_{i\le m}}$, $m \ge 1$ for $v(\cdot)$, which are constructed by Distribution Regression with higher rank signature\footnote{this notion will be precisely defined in Section \ref{sect: preliminaries}.} such that
		$$
		\lim_{m \to \infty} \hat v_{(x^i)_{i\le m}}(\mathbb{X}) = v(\mathbb{X})
		$$
		in probability for random generations of i.i.d. samples $(x^i)_{i \le m} \sim \mathbb{P}_{X}$.
		\item The computation of $\hat v_{(x^i)_{i\le m}}$ boils down to solving a family of two--dimensional linear PDEs and the coefficient of each equation can be computed by evaluating the Euclidean inner product of $\mathbb{R}^d$--vectors.
	\end{enumerate}
\end{proposition}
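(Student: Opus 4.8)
The plan is to split the statement into a \emph{representation} step, a \emph{universality} step, a \emph{consistency} step, and a \emph{computational} step, reading off part~(2) from the last one. For the first step I would use that, for a discrete--time process adapted to its own natural filtration on a compact state space, the optimal stopping value $v(\mathbb{X})$ depends on $\mathbb{X}$ only through the law of the measure--valued martingale (prediction process) $\hat X^1_t = \mathbb{P}[X\in\cdot\mid\mathcal{F}_t]$, and that the resulting map is continuous for the adapted topology $\hat\tau_1$ --- which, by construction, is precisely the topology of weak convergence of $\mathrm{law}(\hat X^1)$. This is the whole point of passing to $\hat\tau_1$ recalled in the introduction (cf.\ \cite{Bartl2021Wasserstein, bonnier2020adapted, backhoff2020all}): $\hat X^1$ is a path with values in the compact Polish space $\mathcal{P}(\mathcal{X})$, and $v$ descends to a $\hat\tau_1$--continuous functional on filtered processes modulo adapted isomorphism, which is exactly what fails for the plain weak topology (Figure~\ref{fig:example}).

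Next I would feed the time--augmented path $\hat X^1$ into the rank $2$ signature $S^2$ of \cite{bonnier2020adapted}. Since $S^2$ inherits the shuffle and Chen identities and is injective up to tree--like equivalence, a Stone--Weierstrass argument on a compact set of prediction--process paths yields, for every $\epsilon>0$, a linear functional $\ell_\epsilon$ with $|v(\mathbb{X}) - \langle \ell_\epsilon, \mathbb{E}\, S^2(\hat X^1)\rangle| < \epsilon$ uniformly; equivalently, $v$ lies in the closure of the RKHS of the rank $2$ signature kernel, i.e.\ it is a regular functional of the higher--order kernel mean embedding $\mu(\mathbb{X}) := \mathbb{E}\, S^2(\hat X^1)$ in the sense of \cite{Salvi2021HighRank}. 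The estimator is then defined as a Tikhonov--regularised kernel ridge regression functional in this RKHS, evaluated at an empirical surrogate of $\mu(\mathbb{X})$ formed from the $m$ samples (the regression part being learned on a corpus of filtered processes with known values, or taken at its population optimum --- only the test--time embedding is estimated).

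For consistency I would, from the $m$ i.i.d.\ path samples $(x^i)_{i\le m}\sim\mathbb{P}_X$, build the empirical measure $\hat{\mathbb{P}}_m$, extract its prediction process $\hat X^{1,m}$ (in the natural--filtration discrete--time setting this is an explicit, finitely--branching function of the samples), and form $\hat\mu_m$ as the empirical mean of the rank $2$ signatures of the resulting paths, all accessed only through the rank $2$ signature kernel. The conclusion $\hat v_{(x^i)_{i\le m}}(\mathbb{X}) \to v(\mathbb{X})$ in probability then follows by chaining: (a) $\hat{\mathbb{P}}_m \to \mathbb{P}$ weakly a.s.\ (Varadarajan / Glivenko--Cantelli on a Polish space); (b) convergence of the induced prediction processes, hence $\hat\mu_m \to \mu(\mathbb{X})$ in RKHS norm by a law of large numbers for kernel mean embeddings; (c) continuity of the regression functional in its argument; and (d) a diagonal argument letting the regularisation parameter and the approximation level $\epsilon$ of $\ell_\epsilon$ vanish slowly in $m$, which promotes $\langle \ell_\epsilon, \mu(\mathbb{X})\rangle$ to $v(\mathbb{X})$.

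Part~(2) then comes from the signature kernel PDE trick of \cite{cass2020computing}: the rank $1$ signature kernel $\langle S(x), S(y)\rangle$ is the value at the terminal corner of a two--dimensional Goursat (hyperbolic linear) PDE with coefficient $\langle \dot x_s, \dot y_t\rangle_{\mathbb{R}^d}$, and the rank $2$ kernel of \cite{Salvi2021HighRank, bonnier2020adapted} is obtained by iterating this construction one level up, again producing a family of two--dimensional linear hyperbolic PDEs whose coefficients are inner products --- ultimately Euclidean inner products of the $\mathbb{R}^d$--valued increments of the sample paths, since the metric on $\mathcal{P}(\mathcal{X})$ is itself realised through the base signature kernel. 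Assembling the Gram matrix and solving the ridge system thus reduces to solving such PDEs. The hard part will be step~(b) of the consistency argument: one must establish quantitative stability of the rank $2$ signature (kernel) under replacing the true prediction process $\hat X^1$ by the empirically estimated $\hat X^{1,m}$ --- a continuity estimate for the higher rank signature with respect to perturbations of the measure--valued driving path --- and combine it with the rate at which empirical conditional laws converge. This is exactly where the discrete--time and compactness hypotheses are used, to secure uniform tightness of the family of prediction processes and a finite, tree--structured conditioning, and where the adapted rather than the plain weak topology is indispensable throughout.
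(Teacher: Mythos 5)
Your high-level architecture (continuity of $v$ for $\hat\tau_1$, rank~$2$ signatures, kernel regression, Goursat PDEs) matches the paper, but two of your steps contain genuine gaps.

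First, the universality step. You claim Stone--Weierstrass produces a \emph{linear} functional $\ell_\epsilon$ with $|v(\mathbb{X})-\langle\ell_\epsilon,\mathbb{E}[S^2(\hat X^1)]\rangle|<\epsilon$. This does not follow. The functions $\mathbb{X}\mapsto\langle\ell,\mathbb{E}_{\mathbb{P}}[S^2(\hat X^1)]\rangle$ separate points of $\mathcal{F}\mathcal{P}_I(\mathcal{K})$ modulo $\sim_1$ (Theorem \ref{thm: main theorem 1}), but they do \emph{not} form an algebra: the shuffle identity \eqref{eq: shuffle product relation} gives multiplicativity pointwise in the path $\hat x$, and this is destroyed by the outer expectation over $\mathcal{L}(\hat X^1)$, so Stone--Weierstrass cannot be applied at the level of laws. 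Moreover $v$ is a supremum of affine functionals of the law, hence genuinely convex rather than affine, so there is no reason for it to lie in the closed linear span of the coordinates of the mean embedding. This is precisely why the paper inserts a second, nonlinear layer: Theorem \ref{thm: main theorem 3} establishes density of the RKHS of $K^2_{\mathcal{S}}=\exp(-\sigma^2\mathcal{D}^2_{\mathcal{S}}(\cdot,\cdot)^2)$ via \cite{christmann2010universal}, and---because $\Lambda_{\text{plain}}(\mathcal{K})$ is \emph{not} compact for $\hat\tau_1$---only in the strict topology. The correct approximant is $\sum_i a_i\exp(-\sigma^2\mathcal{D}^2_{\mathcal{S}}(\cdot,\mathbb{Y}^i)^2)$; your subsequent appeal to ``the closure of the RKHS of the rank~$2$ signature kernel'' is not equivalent to the linear statement you start from, and conflates a kernel on path space with the kernel on the space of filtered processes.

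Second, the consistency step. Taking the prediction process of the empirical measure $\hat{\mathbb{P}}_m$ is the wrong estimator: if the marginals of $X$ are atomless, the $m$ sampled paths are a.s.\ pairwise distinct from time $t_1$ onwards, so under $\hat{\mathbb{P}}_m$ conditioning on the natural filtration collapses to the Dirac mass $\delta_{x^i}$ at the observed path. Hence $\mathcal{L}(\hat X^{1,m})$ does \emph{not} converge to $\mathcal{L}(\hat X^1)$, and your chain (a)$\Rightarrow$(b) fails---weak convergence of empirical measures does not upgrade to convergence in $\hat\tau_1$, which is exactly the discontinuity the whole construction is designed to circumvent and which reappears here at the estimation stage. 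The paper's estimator instead replaces $\mathbb{E}_{\mathbb{P}}[k_S(X,\cdot)|\mathcal{F}_t]$ by the regularized vector-valued kernel ridge regressor $\widehat{F}_{t,k_S}$ of \eqref{eq: minimizer for emirical loss}, with $\lambda_M\to 0$ strictly slower than $\mathcal{O}(M^{-1/2})$, and imports consistency (Theorems \ref{thm: convergence of empirical estimates} and \ref{thm: convergence of empirical 2nd MMD}) from \cite{park2021LS} and \cite{Salvi2021HighRank} rather than from a Varadarajan-type argument. Your representation step and the reduction of part~(2) to two-dimensional Goursat PDEs with Euclidean inner-product coefficients do agree with the paper (Lemma \ref{lemma: MMD by RKHS} and Section \ref{sect: PDE trick}), but without repairing the two steps above the proposed proof of part~(1) does not go through.
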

To our best knowledge, this is the first work on applying kernel regression to solve Optimal Stopping Problem in terms of signature kernels and adapted topology. The only existing relevant work in this direction is the recent work \cite{Bayer2021stopping}. In that paper the authors reformulate the Optimal Stopping problem as a deterministic optimization problem on the linear functionals of the usual expected signatures, which is based on the method of ``signature stopping policies'' introduced by \cite{Kalsi2020Optimal} and is significantly different from our approach as their algorithm was not designed from the perspective of dealing with continuous functions for adapted topologies. Furthermore, our algorithm still inherits all advantages of classical signature kernel learning approaches: for instance, it works well for high dimensional stochastic processes.\\
Finally, note that although we choose Optimal Stopping Problem as the example for the application of this algorithm, the methodology developed in this paper can also be applied to approximate other types of functions on stochastic processes that are bounded and continuous for adapted topologies: for example the ones arising in the context of utility maximization problems 
studied in \cite{backhoff2020finance} or the target function in the utility maximization problem with proportional transaction costs considered in \cite{bayraktar2020finance}, provided the underlying utility functions are bounded.

\medskip

\noindent\textbf{Outline:} In Section \ref{sect: preliminaries} we introduce adapted topologies and the definition of higher rank signatures along with their universality with respect to adapted topologies, and then illustrate how they can be used to form a Distribution Regression scheme for solving Optimal Stopping problem. Section \ref{sect: Regression algorithm} is concerned with an explicit algorithm for implementing our theoretical framework into practice, a consistent estimator for approximating the target function will be constructed. In Section \ref{sect: examples} we provide two concrete examples for pricing American options under the multi--dimensional Black--Scholes model and stopped fractional Brownian motion model to justify the validity of the algorithm. Some basic knowledge on signatures and kernel learning and all proofs for claims in Sections \ref{sect: preliminaries} and \ref{sect: Regression algorithm} are recorded in Appendix, and one can also find a short discussion on applying the algorithm from Section \ref{sect: Regression algorithm} to solve Optimal Stopping problem with general filtrations in Appendix \ref{sect: higher rank adapted topologies}. Brief concluding remarks and outlooks for the future work are given in Section \ref{sect: summary and future work}.\\

\medskip

\noindent\textbf{Acknowledgment:} BH and CL acknowledge financial support from the 2021 Seed Fund project \href{SyBenDaFin}{https://www.mdsi.tum.de/en/mdsi/research/seed-funds-2021/sybendafin/} of the Munich Data Science Institute. ML, TL and CS were supported by DataSig under the EPSRC grant EP/S026347/1.

\section{Theoretical Background}\label{sect: preliminaries}
For the reader's convenience, we first introduce the notations which will be used throughout the whole paper.
\begin{itemize}
	\item  $I= \{0= t_0 < t_1 < \ldots < t_N = T\}$ is a given discrete time interval.
	\item $(\Omega,\mathcal{F},\mathbb{F}=(\mathcal{F}_t)_{t \in I},\mathbb{P})$ denotes a filtered probability space, and $X = (X_t)_{t \in I}$ denotes an $\mathbb{R}^d$--valued $\mathbb{F}$--adapted process defined on $\Omega$, i.e., $X_t$ is $\mathcal{F}_t$--measurable for every $t$.  Following the notation from \cite{Bartl2021Wasserstein} we call the five tuple
	$$
	\mathbb{X} := (\Omega,\mathcal{F},\mathbb{F}=(\mathcal{F}_t)_{t \in I},\mathbb{P}, X)
	$$
	a \textit{filtered} (or \textit{adapted}) process.
	\item $\mathbb{P}_X$ denotes the law of $X$, i.e., $\mathbb{P}_X = \mathbb{P} \circ X^{-1}$.
	\item $\mathcal{F}\mathcal{P}_I$ denotes the collection of all filtered processes indexed by $I$.
	\item $\Lambda_{\text{plain}} \subset \mathcal{F}\mathcal{P}_I$ denotes the subset of $\mathcal{F}\mathcal{P}_I$ consisting of filtered processes with \textit{natural filtration}, that is, if $\mathbb{X} := (\Omega,\mathcal{F},\mathbb{F}=(\mathcal{F}_t)_{t \in I},\mathbb{P}, X) \in \Lambda_{\text{plain}}$, then the filtration $\mathbb{F}$ is generated by the process $X$.
	\item $\mathcal{K} \subset \mathbb{R}^d$ denotes a compact subset of $\mathbb{R}^d$, and  $\mathcal{F}\mathcal{P}_I(\mathcal{K})$ denotes the set of all filtered processes taking values in $\mathcal{K}$. The notation $\Lambda_{\text{plain}}(\mathcal{K})$ is then self--explanatory.
	\item Given an $\mathbb{X} := (\Omega,\mathcal{F},\mathbb{F}=(\mathcal{F}_t)_{t \in I},\mathbb{P}, X)$, we use $\mathcal{T}_{\mathbb{X}}$ to denote the set of all stopping times (bounded by $T$) with respect to the filtration $\mathbb{F}$; that is, $\tau \in \mathcal{T}_{\mathbb{X}}$ iff $\{\tau \le t\} \in \mathcal{F}_t$ holds for every $t \in I$.
	\item For any Polish space $\mathcal{X}$, $\mathcal{P}(\mathcal{X})$ denotes the set of (Borel) probability measures on $\mathcal{X}$, which will always be equipped with the weak topology. 
	\item $C_b(\mathcal{X})$ denotes the space of real--valued bounded continuous functions defined on $\mathcal{X}$, which is always endowed with the strict topology generated by seminorms $p_{\psi}(f) := \sup_{x \in \mathcal{X}}|f(x)\psi(x)|$ for $\psi$ functions vanishing at infinity (see \cite[Definition 8]{chevyrev2018signature}). If $\mathcal{X}$ is compact, then the strict topology coincides with the uniform convergence topology.
\end{itemize}

\subsection{Prediction Processes and Adapted Topologies}
As we have seen in the introduction, the weak topology is too coarse for applications in stochastic optimal control problem such as Optimal Stopping Problem (OSP). Hence, as the first step, we need to find a stronger topology on $\mathcal{F}\mathcal{P}_I$ which gives continuity of the value function $v(\cdot)$ of OSP. Towards this aim, we introduce
the following notions which were defined by D. Aldous in \cite{Aldous81adapt} and will be the central objects in the present paper.
\begin{definition}\label{def: Prediction process}
	For any $\mathbb{X} \in \mathcal{F}\mathcal{P}_I$, where $I = \{0= t_0 < t_1 < \ldots < t_N = T\}$, the $\mathcal{P}((\mathbb{R}^d)^I)$--valued discrete process $\hat X = (\hat X_t)_{t \in I}$ such that $\hat X_t = \mathbb{P}[X \in \cdot|\mathcal{F}_t]$ is called the prediction process of $\mathbb{X}$.
\end{definition}

\begin{remark}\label{remark: basic properties of prediction process}
	
	The notion of prediction process can be extended to stochastic processes with c\`adl\`ag trajectories on continuous time interval. For a detailed introduction of prediction processes for the continuous time case, in particular their existence and uniqueness, we refer readers to \cite[Sect. 13]{Aldous81adapt}. 
	
\end{remark}

Clearly, from the above definition,  $\hat X_t = \mathbb{P}[X \in \cdot| \mathcal{F}_t]$ describes how well one can predict the trajectories of $X$ given the information $\mathcal{F}_t$ and therefore the prediction process $\hat X = (\hat X_t)_{t \in I}$ reveals the evolution style of the filtration $\mathbb{F}$ in terms of how its ``prediction ability'' varies in time. On the other hand, if $\mathcal{F}_0$ is trivial, then $\hat X_0 = \mathbb{P}_X$ is the law of $X$; that is, the law of $X$ corresponds only to the initial value of the prediction process $\hat X$. Based on the above observation, we may believe that one can obtain a deeper insight of the essential difference of the information flows induced by two filtered processes via investigating their prediction processes, whilst these differences cannot be reflected by comparing their laws at all. Therefore we introduce the following strengthened version of weak topology:

\begin{definition}\label{def: extended weak convergence}
	\begin{enumerate}
		\item We say that two filtered processes $\mathbb{X}, \mathbb{Y} \in \mathcal{F}\mathcal{P}_I$ are synonymous if their prediction processes $\hat X$ and $\hat Y$ have the same laws:
		$$ 
		\mathcal{L}(\hat X) = \mathcal{L}(\hat Y).
		$$
		\item We say that a sequence of filtered processes $\mathbb{X}^n$ converges to $\mathbb{X}$ in the sense of  extended weak convergence if 
		$$
		\mathcal{L}(\hat X^n) \to \mathcal{L}(\hat X),
		$$
		where the convergence happens in the weak topology of $\mathcal{P}(\mathcal{P}((\mathbb{R}^d)^I)^I)$.
	\end{enumerate}
\end{definition}
The next theorem confirms that extended weak convergence topology is really stronger than the usual weak topology in the sense that it gives a continuity of value functions of OSP when we are using natural filtration. Recall that $\Lambda_{\text{plain}} \subset \mathcal{F}\mathcal{P}_I$ denotes the subset consisting of all filtered processes equipped with their natural filtration.
\begin{theorem}\label{theorem: continuity of OST for extended weak convergence}
	Let $\gamma: \mathbb{R}^d \to \mathbb{R}$ be a continuous bounded function, and $v: \mathcal{F}\mathcal{P}_I \to \mathbb{R}$ be the value function of the following Optimal Stopping Problem:
	$$
	v(\mathbb{X}) := \sup_{\tau \in \mathcal{T}_{\mathbb{X}}}\mathbb{E}_{\mathbb{P}}[\gamma(X_{\tau})].
	$$
	Then
	$v$ is continuous for the extended weak convergence topology on $\Lambda_{\text{plain}}$, that is, if $\mathbb{X}^n$ and $\mathbb{X}$ are elements in $\Lambda_{\text{plain}}$ such that $\mathcal{L}(\hat X^n) \to \mathcal{L}(\hat X)$, then $v(\mathbb{X}^n) \to v(\mathbb{X})$.
	\end{theorem}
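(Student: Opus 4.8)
The plan is to reduce the statement, via dynamic programming, to a continuity property of a functional of the law of the prediction process, and then to appeal to the structural features of prediction processes that are exactly the reason the extended weak topology is the natural one here. Since we work in discrete time and $\gamma$ is bounded, the value admits the Snell--envelope representation $v(\mathbb{X}) = \mathbb{E}_{\mathbb{P}}[Z_{t_0}]$, where $Z_{t_N}=\gamma(X_{t_N})$ and $Z_{t_k}=\max\!\big(\gamma(X_{t_k}),\,\mathbb{E}_{\mathbb{P}}[Z_{t_{k+1}}\mid\mathcal{F}_{t_k}]\big)$; the envelope is bounded by $\|\gamma\|_\infty$, so there is no integrability obstruction. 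The goal thus becomes: show that $\mathbb{X}\mapsto\mathbb{E}_{\mathbb{P}}[Z_{t_0}]$ depends on $\mathbb{X}$ only through $\mathcal{L}(\hat X)$ and is continuous in it.

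First I would make the reduction to the prediction process precise. On $\Lambda_{\mathrm{plain}}$ the natural filtration of $X$ agrees with that of $\hat X$: each $\hat X_{t_j}$ is $\sigma(X_{t_0},\dots,X_{t_j})$--measurable, while $X_{t_j}$ is read off $\hat X_{t_j}$ as the atom of its time--$t_j$ marginal (which is $\delta_{X_{t_j}}$, because $X_{t_j}$ is $\mathcal{F}_{t_j}$--measurable); hence $\mathcal{F}_{t_k}=\sigma(\hat X_{t_0},\dots,\hat X_{t_k})$. For any bounded measurable path functional $\phi$ one has $\mathbb{E}_{\mathbb{P}}[\phi(X)\mid\mathcal{F}_{t_k}]=\int\phi\,\mathrm{d}\hat X_{t_k}$, and by consistency of conditional probabilities $\hat X_{t_{k+1}}$ is the disintegration of $\hat X_{t_k}$ along its time--$t_{k+1}$ coordinate, evaluated at $X_{t_{k+1}}$. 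Substituting these facts into the Snell recursion gives $Z_{t_k}=\psi_k(\hat X_{t_k})$ for a \emph{fixed, model--independent} family of bounded functions on $\mathcal{P}((\mathbb{R}^d)^I)$, namely $\psi_N(\mu)=\int\gamma(y_{t_N})\,\mu(\mathrm{d}y)$ and $\psi_k(\mu)=\max\!\big(\int\gamma(y_{t_k})\,\mu(\mathrm{d}y),\,(\mathcal{Q}_k\psi_{k+1})(\mu)\big)$, where $\mathcal{Q}_k$ is the universal ``disintegrate along coordinate $t_{k+1}$, evaluate $\psi_{k+1}$, integrate back'' operator. Therefore $v(\mathbb{X})=\int\psi_0\,\mathrm{d}\mathcal{L}(\hat X_{t_0})$, which manifestly depends only on $\mathcal{L}(\hat X)$.

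The core of the argument, and the step I expect to be genuinely hard, is the continuity of this representation. The delicate point is that the continuation--value maps $\mathcal{Q}_k\psi_{k+1}$ are \emph{not} weakly continuous on $\mathcal{P}((\mathbb{R}^d)^I)$, because disintegration of a measure is not a weakly continuous operation; this is precisely the mechanism behind the counterexample in Figure~\ref{fig:example}, and it already shows that ordinary weak convergence of path laws cannot suffice. What rescues the proof is that, because \emph{both} $\mathbb{X}^n$ and $\mathbb{X}$ lie in $\Lambda_{\mathrm{plain}}$, the hypothesis $\mathcal{L}(\hat X^n)\to\mathcal{L}(\hat X)$ forces weak convergence of the full joint laws $\mathcal{L}(\hat X^n_{t_k},\dots,\hat X^n_{t_N})$ for each $k$ — and this is exactly the statement that the conditional laws carried by the prediction process converge, which is what $\mathcal{Q}_k\psi_{k+1}$ reads off. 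Concretely I would use the Skorokhod representation to realise $(\hat X^n)$ and $\hat X$ on one probability space with $\hat X^n_{t_k}\to\hat X_{t_k}$ weakly for every $k$, almost surely, re--equip with the natural filtrations of $X^n$ and $X$, and then show by backward induction that $Z^n_{t_k}\to Z_{t_k}$ in $L^1$; the inductive step is the identity $\mathbb{E}[Z^n_{t_{k+1}}\mid\mathcal{F}^n_{t_k}]=(\mathcal{Q}_k\psi_{k+1})(\hat X^n_{t_k})$ together with the convergence of its right--hand side, which is where the full strength of extended weak convergence (as opposed to weak convergence of path laws) is consumed, and which is exactly the continuity property of prediction processes along extended--weakly convergent sequences going back to Aldous~\cite{Aldous81adapt} (see also~\cite{backhoff2020all} for the statement in the language of adapted topologies). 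Taking expectations at $k=0$ then yields $v(\mathbb{X}^n)\to v(\mathbb{X})$.

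An alternative, quantitative route bypasses the Skorokhod step: on $\Lambda_{\mathrm{plain}}(\mathcal{K})$ the adapted Wasserstein distance $\mathcal{AW}_1$ metrizes extended weak convergence \cite{Backhoff2020distance,Bartl2021Wasserstein}, so it is enough to bound $|v(\mathbb{X})-v(\mathbb{Y})|$ in terms of $\mathcal{AW}_1(\mathbb{X},\mathbb{Y})$ and the modulus of continuity of $\gamma$ on $\mathcal{K}$; one fixes a near--optimal bicausal coupling of $\mathbb{X}$ and $\mathbb{Y}$ and runs the same backward induction for the two Snell envelopes on the product space, using bicausality to transfer conditional expectations between the two filtrations with controlled error. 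In either approach the only nontrivial ingredient is that extended weak convergence, unlike plain weak convergence, governs the conditional structure of the process, which is precisely what turns the nested $\max$/conditional--expectation recursion defining $v$ into a continuous operation; the remaining points (the Snell identity, the measurability and boundedness of the $\psi_k$) are routine.
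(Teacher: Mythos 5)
You should first be aware that the paper's own ``proof'' of this theorem is a two-line citation: it attributes the statement to \cite[Theorem 1.2]{backhoff2020all} and \cite[Theorem 17.2]{Aldous81adapt} and proves nothing itself. Your reduction is therefore more detailed than what the paper offers, and the parts of it that are actually carried out are correct: the Snell recursion, the identification $\mathcal{F}_{t_k}=\sigma(\hat X_{t_0},\dots,\hat X_{t_k})$ on $\Lambda_{\text{plain}}$, the representation $Z_{t_k}=\psi_k(\hat X_{t_k})$ with a model-independent family $\psi_k$ built from the universal disintegration operators $\mathcal{Q}_k$, and the observation that $\mathcal{L}(\hat X^n)\to\mathcal{L}(\hat X)$ yields weak convergence of all finite-dimensional joint laws of the prediction processes. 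This correctly isolates where the difficulty lives.

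The gap is that the isolated difficulty is then not resolved: the inductive step of your backward induction does not follow from what you have set up. After Skorokhod you know $\hat X^n_{t_k}\to\hat X_{t_k}$ almost surely in the weak topology, but the function $\mathcal{Q}_k\psi_{k+1}$ is, as you yourself note, \emph{not} weakly continuous, so $(\mathcal{Q}_k\psi_{k+1})(\hat X^n_{t_k})\to(\mathcal{Q}_k\psi_{k+1})(\hat X_{t_k})$ does not follow. Equivalently, writing the continuation value as $\mathbb{E}[\psi_{k+1}(\hat X^n_{t_{k+1}})\mid\sigma(\hat X^n_{t_k})]$, the inductive hypothesis gives $L^1$-convergence of the integrands, but the conditioning $\sigma$-fields $\sigma(\hat X^n_{t_k})$ need not converge to $\sigma(\hat X_{t_k})$ merely because the generating random measures converge almost surely -- this is exactly the mechanism of the counterexample in Figure~\ref{fig:example} transplanted one level up, and it is not neutralized by passing from $X$ to $\hat X^1$ unless one actually uses the joint convergence of $(\hat X^n_{t_k},\hat X^n_{t_{k+1}})$ together with the martingale/consistency structure of prediction processes. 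Your proposal covers this step with the phrase ``which is exactly the continuity property of prediction processes along extended-weakly convergent sequences going back to Aldous,'' but that property \emph{is} the content of \cite[Theorem 17.2]{Aldous81adapt}; invoking it here makes the argument circular as a self-contained proof, and reduces it to the same citation the paper makes. The alternative route via $\mathcal{A}\mathcal{W}_1$ has the same character: the Lipschitz estimate for $v$ in terms of a bicausal coupling is genuinely elementary, but the identification of the $\mathcal{A}\mathcal{W}_1$-topology with $\hat\tau_1$ on $\Lambda_{\text{plain}}(\mathcal{K})$ is the main theorem of \cite{backhoff2020all}, and moreover that route needs a compact state space (or at least uniform continuity of $\gamma$ and tightness), whereas the theorem is stated on all of $\Lambda_{\text{plain}}$. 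In short: either you cite the deep external result, in which case your write-up agrees with (and usefully elaborates) the paper's proof, or you intend the Skorokhod-plus-induction argument to be self-contained, in which case the inductive step is a genuine unproved claim and the proof does not close.
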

\begin{proof}
	The statement (1) was proved in \cite[Theorem 1.2]{backhoff2020all}, and the second statement follows from \cite[Theorem 17.2]{Aldous81adapt}.
\end{proof}
Here we note that the assumption that filtrations are the natural ones in the above theorem cannot be removed. In fact, even for the discrete case $I  = \{0=t_0 <t_1 < \ldots < t_N=T\}$ with $N \ge 2$, one can construct a continuous bounded function $\gamma$ and two filtered processes $\mathbb{X}, \mathbb{Y}$ with non--natural filtrations such that $\mathbb{X}$ and $\mathbb{Y}$ are synonymous (i.e., $\mathcal{L}(\hat X) = \mathcal{L}(\hat Y)$), but $v(\mathbb{X}) \neq v(\mathbb{Y})$, see \cite[Sect.6]{Bartl2021Wasserstein}. This counterexample illustrates that in general the weak topology induced by prediction processes is still not enough to describe full details of filtrations attached to filtered processes, and we still have motivations to improve the extended weak convergence topology. For this purpose we introduce the following definitions, which was initiated by Hoover and Keisler (\cite{Hoover84adapt}) based on the philosophy that more knowledge regarding filtrations can be revealed by an iteration of taking conditional expectations, see also \cite{bonnier2020adapted} and \cite{Bartl2021Wasserstein}: 
\begin{definition}\label{def: higher rank prediction processes} 
	\begin{enumerate}
		\item For $r \ge 1$ and $\mathbb{X} \in \mathcal{F}\mathcal{P}_I$, the rank $r$ prediction process $\hat X^r$ is defined recursively as the prediction process of the rank $r-1$ prediction process, i.e.,
		$$
		\hat X^r_t = \mathbb{P}[\hat X^{r-1} \in \cdot| \mathcal{F}_t], \quad  t \in I.
		$$
		By convention we set $\hat X^0 = X$.
		\item For $r \ge 1$ and $\mathbb{X}^n, \mathbb{X} \in \mathcal{F}\mathcal{P}_I$, we say that $\mathbb{X}^n$ converges to $\mathbb{X}$ in the rank $r$ adapted topology if the weak convergence holds for the law of their rank $r$ prediction processes, i.e., 
		$$
		\mathcal{L}(\hat X^{n,r}) \to \mathcal{L}(\hat X^r).
		$$
		We use $\hat \tau_r$ to denote the rank $r$ adapted topology on $\mathcal{F}\mathcal{P}_I$, and $\mathbb{X}^n \xrightarrow{r} \mathbb{X}$ to denote the corresponding convergence. Moreover, we use $\mathbb{X} \sim_r \mathbb{Y}$ to denote the relation that $\mathcal{L}(\hat X^r) = \mathcal{L}(\hat Y^r)$, and call that $\mathbb{X}$ and $\mathbb{Y}$ have the same adapted distribution of rank $r$.
	\end{enumerate}
\end{definition}
Clearly, for $r = 0$ the rank $0$ adapted topology $\hat\tau_0$ is nothing but the usual weak convergence (or convergence in law), while for $r=1$ that the rank $1$ adapted topology $\hat\tau_1$ gives Aldous' extended weak convergence.

\medskip

For technical reasons, from now on, unless explicitly stated, we will mainly consider the discrete time filtered processes taking values in a compact subset $\mathcal{K} \subset \mathbb{R}^d$ and equipped with their natural filtration, namely elements in $\Lambda_{\text{plain}}(\mathcal{K}) \subset \mathcal{F}\mathcal{P}_I(\mathcal{K})$ for $I =\{0=t_0 <t_1 < \ldots < t_N=T\}$. We believe this is the most relevant case for designing any numerical scheme for solving Optimal Stopping Problem (OSP): First, in most financial applications the filtration attached to stochastic processes are generated by themselves, i.e., the filtrations are natural; secondly, if we want to compute the value function of OSP for continuous time models numerically, we usually need to discretize them in time and localize them in space, and the resulting discretized models are indeed elements in $\Lambda_{\text{plain}}(\mathcal{K}) \subset \mathcal{F}\mathcal{P}_I(\mathcal{K})$, and in Appendix \ref{subsect: discretize} we can see that the value functions of OSP on these discrete models converge to their counterparts for continuous time markets, at least from a theoretical perspective.   \\
As a consequence, by Theorem \ref{theorem: continuity of OST for extended weak convergence}, in this case we only need to consider the extended weak convergence, i.e., the rank $1$ adapted topology $\hat \tau_1$.
\begin{remark}\label{remark: counterexample of OSP for general filtrations}
	Although in the present paper we will mainly focus on filtered processes equipped with natural filtration, let us mention here a remarkable result obtained in \cite{Bartl2021Wasserstein}: for the discrete case $I = \{0=t_0 <t_1 < \ldots < t_N=T\}$, in the contrast to Theorem \ref{theorem: continuity of OST for extended weak convergence} on adapted processes with natural filtration, the correct topology for \textbf{any} value function $v$ of OSP on the whole set $\mathcal{F}\mathcal{P}_I$ (i.e., the set of all adapted processes with general filtration) to be continuous is the rank $N$ adapted topology $\hat \tau_N$; that is, if $\mathbb{X}^n \xrightarrow{N} \mathbb{X}$ then $v(\mathbb{X}^n) \to v(\mathbb{X})$ and there exist a continuous bounded function $\gamma$ and a sequence of filtered processes $\mathbb{X}^n$, $n \ge 1$ such that $\mathbb{X}^n \to \mathbb{X}$ in the rank $N-1$ adapted topology for some $\mathbb{X} \in \mathcal{F}\mathcal{P}_I$ with $v(\mathbb{X}^n) \nrightarrow v(\mathbb{X})$ (where $v(\cdot)$ denotes the value function of OSP related to $\gamma$), see \cite[Theorem 6.1]{Bartl2021Wasserstein}. This means that for $N+1$ steps discrete filtered processes with general filtration, one needs to study their rank $N$ prediction processes to find out all essential information hidden inside these filtrations. A more detailed discussion on OSP for filtered processes with general filtration can be found in Appendix \ref{sect: higher rank adapted topologies}.
\end{remark}

\subsection{Rank 2 Signatures and Distribution Regression for Solving Optimal Stopping Problem  }\label{sect: signatures and rank 2 signatures}

Recall that by a classical result in probability theory, the law of bounded $\mathbb{R}$--valued random variables are uniquely determined by their moments. This claim remains valid for the multidimensional case, namely for $\mathcal{K} \subset \mathbb{R}^d$ a compact subset and $X,Y$ two $\mathcal{K}$--valued random variables whose distributions are denoted by $\mathbb{P}_X$ and $\mathbb{P}_Y$, it holds that $\mathbb{P}_X  = \mathbb{P}_Y$ iff
$$
\int_{\mathcal{K}} \exp_{\otimes}(x) \mathbb{P}_X(dx) = \int_{\mathcal{K}} \exp_{\otimes}(y) \mathbb{P}_Y(dy),
$$
where $\exp_{\otimes}$ is the tensor exponential map from $\mathbb{R}^d$ to the tensor algebra $\textbf{T}((\mathbb{R}^d))$ over $\mathbb{R}^d$, $\exp_{\otimes}(x) = \sum_{n=0}^\infty \frac{x^{\otimes n}}{n!}$ with $x^{\otimes n} = \sum_{i_1,\ldots,i_n=1}^d x^{i_1}\ldots x^{i_n}e_{i_1}\otimes \ldots\otimes e_{i_n}$ ($e_i$'s are the canonical basis of $\mathbb{R}^d$). Here we remark that $\textbf{T}((\mathbb{R}^d))$ contains a Hilbert space $\mathcal{H}^1$ (see Appendix \ref{subsect: hilbert space on tensor algebra}) such that it includes the image of $\exp_{\otimes}$ and therefore the above integrals are $\mathcal{H}^1$--valued Bochner integrals and always well--defined. Since $\int_{\mathcal{K}} \exp_{\otimes}(x) \mathbb{P}_X(dx)$ can be written as $\mathbb{E}_{\mathbb{P}}[\exp_{\otimes}(X)] = \sum_{n=0}^\infty \frac{1}{n!}\mathbb{E}_{\mathbb{P}}[X^{\otimes n}]$, we also call this integral as the moment generating function of $X$.\\
The above results on characterizing distributions in terms of moment generating functions can be further extended to the case of discrete time filtered processes taking values in compact sets. More precisely, for $I = \{0=t_0 <t_1 < \ldots < t_N=T\}$ there exists a mapping $S: (\mathbb{R}^d)^I \to T((\mathbb{R}^{d+1}))$ such that for $\mathbb{X}, \mathbb{Y} \in \Lambda_{\text{plain}}(\mathcal{K})$, one has 
\begin{equation}\label{eq: characterization of law by expected signature}
	\mathbb{X} \sim_0 \mathbb{Y} \iff \mathcal{L}(X) = \mathcal{L}(Y) \iff \mathbb{E}_{\mathbb{P}}[S(X)] = \mathbb{E}_{\mathbb{Q}}[S(Y)].
\end{equation}
Such a mapping $S$ is defined via
\begin{definition}\label{def: signature rank 0}
	For $x = (x_t)_{t \in I}$ a discrete time path defined on $I=\{0=t_0 <t_1 < \ldots < t_N=T\}$ such that $x_t \in \mathcal{K}$ for all $t$, its (time--augmented) signature $S(x)$ is defined as
	$$
	S(x) = \prod_{i=0}^N \exp_{\otimes}((t_{i}-t_{i-1}, x_{t_{i}}-x_{t_{i-1}})) \in \mathcal{H}^1, \quad t_{-1} := 0, x_{t_{-1}} :=0,
	$$
	where $\mathcal{H}^1$ is a Hilbert space defined as in Appendix \ref{subsect: hilbert space on tensor algebra}.
	For $\mathbb{X} \in \mathcal{F}\mathcal{P}_I(\mathcal{K})$ a discrete time filtered process, we call $\mathbb{E}_{\mathbb{P}}[S(X)]$ the expected signature of $\mathbb{X}$.
\end{definition}
\begin{remark}
	In fact, the notion of (expected) signature and the relation \eqref{eq: characterization of law by expected signature} can be further generalized to all continuous filtered processes defined on continuous time interval $[0,T]$ satisfying certain regularity (e.g., having bounded variation), see e.g. \cite{chevyrev2018signature} and the relevant literature therein. For a quick review of basic properties on the signature $S$ we also refer readers to Appendix \ref{appendix: signature}.
\end{remark}
In this section, we will generalize the notion of (expected) signature and the relation \eqref{eq: characterization of law by expected signature} to filtered processes with extended weak convergence topology. Recall that the extended weak convergence is given in terms of prediction process, which is a measure--valued martingale, we need to construct a ``signature'' map for measure--valued paths.\\
First let us introduce some notations: Let $$\mathcal{H}^2 := \Big\{\textbf{h} \in  \prod_{n=0}^\infty(\mathbb{R} \oplus \mathcal{H}^1)^{\hat \otimes n}: \sum_{n=0}^\infty \langle\textbf{h}_n, \textbf{h}_n\rangle_{(\mathbb{R} \oplus \mathcal{H}^1)^{\hat \otimes n}} < \infty \Big\}
$$ be the Hilbert space embedded in the product space $\prod_{n=0}^\infty(\mathbb{R} \oplus \mathcal{H}^1)^{\hat \otimes n}$, see Appendix \ref{subsect: hilbert space on tensor algebra}. We also use $\exp_1$ to denote the tensor exponential on $\mathcal{H}^1$.
\begin{definition}\label{def: rank 2 sig}
	Let $I =\{0=t_0 <t_1 < \ldots < t_N=T\}$, let $\mu = (\mu_t)_{t \in I}$ be a discrete time path such that $\mu_t \in \mathcal{P}((\mathcal{K})^I)$ is a distribution on $\mathcal{K}$ for each $t$. Let $S: (\mathbb{R}^d)^I \to \mathcal{H}^1$ be the signature map as in Definition \ref{def: signature rank 0} and set $\bar \mu_t := \int S(x) \mu_t(dx) \in \mathcal{H}^1$ to be the expected signature of $\mu_t$. Let $S_{\mathcal{H}^1}$ be the (time--augmented) signature map defined on $(\mathcal{H}^1)^I$, then the mapping $S^2: \mathcal{P}((\mathcal{K})^I)^I \to \mathcal{H}^2$,
	$$
	S^2(\mu) :=  S_{\mathcal{H}^1}(t \mapsto \bar \mu_t) = \prod_{i=0}^N \exp_{1}(t_{i} - t_{i-1}, \bar \mu_{t_{i}} - \bar \mu_{t_{i-1}}).
	$$
\end{definition}

For $\mathbb{X} \in \mathcal{F}\mathcal{P}_I(\mathcal{K})$ with prediction process is $\hat X^1$, by definition we have
$$
S^2(\hat X^1) = S_{\mathcal{H}^1}(t \mapsto \mathbb{E}[S(X) | \mathcal{F}_t]).
$$
The following theorem shows that the expectation of $S^2(\hat X^1)$ plays the role of ``moment generating function'' for adapted distribution of rank $1$ on $\mathcal{F}\mathcal{P}_I(\mathcal{K})$. One can find a generalized version of this theorem in \cite[Theorem 3]{bonnier2020adapted}. For readers' convenience we also give a short proof in this paper, see
Appendix \ref{appendix: Proof}. 

\begin{theorem}\label{thm: main theorem 1}
	For $\mathbb{X} = (X, \mathbb{F},\mathbb{P}), \mathbb{Y} = (Y, \mathbb{G}, \mathbb{Q}) \in \mathcal{F}\mathcal{P}_I(\mathcal{K})$ with $I = \{0 = t_0 < t_1 < \ldots < t_N =T\}$ we have
	\begin{itemize}
		\item $
		\mathbb{X} \sim _0\mathbb{Y} \iff \mathcal{L}(\hat X^0) = \mathcal{L}(\hat Y^0) \iff \mathbb{E}_{\mathbb{P}}[S(X)] = \mathbb{E}_{\mathbb{Q}}[S(Y)],
		$
		\item $
		\mathbb{X} \sim _1\mathbb{Y} \iff \mathcal{L}(\hat X^1) = \mathcal{L}(\hat Y^1) \iff \mathbb{E}_{\mathbb{P}}[S^{2}(\hat X^1)] = \mathbb{E}_{\mathbb{Q}}[S^{2}(\hat Y^1)],
		$
	\end{itemize}
	where $\hat X^r$ and $\hat Y^r$, $r= 0,1$ denote the rank $r$ prediction process of $\mathbb{X}$ and $\mathbb{Y}$, respectively, see Definition \ref{def: higher rank prediction processes}.
\end{theorem}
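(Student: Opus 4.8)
The plan is to isolate one \emph{moment characterization lemma} and apply it twice, once in finite and once in infinite dimension. The lemma I would state is: given a separable Hilbert space $H$, a compact $\mathcal{C}\subset H$, and $I=\{0=t_0<\dots<t_N=T\}$, write $S_H$ for the time--augmented signature map on $H$--valued discrete paths, built exactly as $S$ in Definition \ref{def: signature rank 0} with $\mathbb{R}^d$ replaced by $H$ (so $S_{\mathbb{R}^d}=S$ lands in $\mathcal{H}^1$, and $S_{\mathcal{H}^1}$ lands in $\mathcal{H}^2$, via the construction of Appendix \ref{subsect: hilbert space on tensor algebra}); then two Borel probability measures $\nu_1,\nu_2$ on $\mathcal{C}^I$ are equal iff $\int S_H\,d\nu_1=\int S_H\,d\nu_2$ as Bochner integrals. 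Granting this, statement (1) is the lemma with $(H,\mathcal{C})=(\mathbb{R}^d,\mathcal{K})$ together with the tautologies $\mathbb{X}\sim_0\mathbb{Y}\iff\mathcal{L}(\hat X^0)=\mathcal{L}(\hat Y^0)$ and $\hat X^0=X$; it is exactly the already--recorded relation \eqref{eq: characterization of law by expected signature}. Statement (2) will follow from the lemma with $(H,\mathcal{C})=(\mathcal{H}^1,\mathcal{K}_1)$, where $\mathcal{K}_1:=\{\int S\,d\nu:\nu\in\mathcal{P}(\mathcal{K}^I)\}\subset\mathcal{H}^1$.

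To prove the lemma I would argue as follows. On $\mathcal{C}^I$ the map $S_H$ is continuous with bounded image (each $\exp_\otimes$ factor is evaluated at an increment in a bounded set, and there are finitely many factors), so $\nu\mapsto\int S_H\,d\nu$ is a well-defined continuous map into the relevant Hilbert space and the ``only if'' direction is trivial. For the converse, I would look at the family $\mathcal{A}$ of real coordinate functionals $x\mapsto\langle\ell,S_H(x)\rangle$ as functions on the compact metric space $\mathcal{C}^I$: it is linear in $\ell$, it contains the constants (the level--$0$ term), and it is closed under products because the shuffle identity $\langle\ell_1,S_H(x)\rangle\,\langle\ell_2,S_H(x)\rangle=\langle\ell_1\shuffle\ell_2,S_H(x)\rangle$ holds verbatim for $H$--valued paths; hence $\mathcal{A}$ is a unital subalgebra of $C(\mathcal{C}^I)$. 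It separates points because $S_H$ is injective on discrete $H$--valued paths (the tensor exponential is group--like, hence invertible: from $S_H(x)$ one recovers each increment $x_{t_i}-x_{t_{i-1}}$, and hence $x$). Stone--Weierstrass then gives $\overline{\mathcal{A}}=C(\mathcal{C}^I)$, so equality of all coordinate integrals of $S_H$ forces $\int f\,d\nu_1=\int f\,d\nu_2$ for every $f\in C(\mathcal{C}^I)$, i.e. $\nu_1=\nu_2$.

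It then remains to feed statement (2) through the lemma. Put $\Phi:\mathcal{P}(\mathcal{K}^I)\to\mathcal{H}^1$, $\Phi(\nu)=\bar\nu=\int S\,d\nu$. Since $\mathcal{K}^I$ is compact, $\mathcal{P}(\mathcal{K}^I)$ is weakly compact (Prokhorov); $\Phi$ is weakly continuous because $S$ is continuous and bounded on $\mathcal{K}^I$; and $\Phi$ is injective by statement (1). Hence $\Phi$ is a homeomorphism onto the compact set $\mathcal{K}_1$, and so is the coordinatewise map $\Phi^{\times I}:\mathcal{P}(\mathcal{K}^I)^I\to\mathcal{K}_1^I$. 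By Definition \ref{def: rank 2 sig} and the identity $\int g\,d\,\mathbb{P}[X\in\cdot\mid\mathcal{F}_t]=\mathbb{E}_{\mathbb{P}}[g(X)\mid\mathcal{F}_t]$ (applied to the bounded continuous $g=S$), one has $\Phi^{\times I}(\hat X^1)=(\,\mathbb{E}_{\mathbb{P}}[S(X)\mid\mathcal{F}_t]\,)_{t\in I}$ and $S^2(\hat X^1)=S_{\mathcal{H}^1}\!\big(\Phi^{\times I}(\hat X^1)\big)$, and likewise for $\mathbb{Y}$, whence
\begin{align*}
\mathcal{L}(\hat X^1)=\mathcal{L}(\hat Y^1)
&\iff \mathcal{L}\big(\Phi^{\times I}(\hat X^1)\big)=\mathcal{L}\big(\Phi^{\times I}(\hat Y^1)\big)\\
&\iff \mathbb{E}_{\mathbb{P}}\big[S_{\mathcal{H}^1}(\Phi^{\times I}(\hat X^1))\big]=\mathbb{E}_{\mathbb{Q}}\big[S_{\mathcal{H}^1}(\Phi^{\times I}(\hat Y^1))\big]\\
&\iff \mathbb{E}_{\mathbb{P}}[S^2(\hat X^1)]=\mathbb{E}_{\mathbb{Q}}[S^2(\hat Y^1)],
\end{align*}
the first step since $\Phi^{\times I}$ is a homeomorphism (hence injective on pushforward measures), the second by the lemma with $(H,\mathcal{C})=(\mathcal{H}^1,\mathcal{K}_1)$.

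The hard part will not be any single new idea but checking that the $\mathbb{R}^d$ theory genuinely transfers to the infinite--dimensional coefficient space $\mathcal{H}^1$. Three points need care: (a) that $\mathcal{K}_1$ is compact in the \emph{norm} topology of $\mathcal{H}^1$, which relies on the boundedness and weak continuity of $\nu\mapsto\int S\,d\nu$ — i.e. on the properties of $S$ collected in Appendix \ref{appendix: signature}; (b) that $S_{\mathcal{H}^1}$ is well defined into $\mathcal{H}^2$, that the expectations above are bona fide $\mathcal{H}^2$--valued Bochner integrals (the integrand is continuous in $\hat X^1$ and bounded on the compact range), and that the shuffle identity and the invertibility of the tensor exponential hold for $\mathcal{H}^1$--valued paths — precisely the facts the construction of $\mathcal{H}^2$ in Appendix \ref{subsect: hilbert space on tensor algebra} is arranged to supply; and (c) that Stone--Weierstrass is applicable, which it is, since $\mathcal{C}^I$ is a compact metric space whatever the dimension of $H$. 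With these bookkeeping points settled, the argument above closes.
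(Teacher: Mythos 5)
Your proposal is correct and follows essentially the same route as the paper's proof: both establish the rank-0 case via the shuffle-product point-separating algebra plus Stone--Weierstrass, then use the injectivity of $\nu\mapsto\int S\,d\nu$ to replace the measure-valued prediction process by an $\mathcal{H}^1$-valued path and repeat the same argument one level up. Your version is merely more explicit about the bookkeeping the paper leaves implicit (norm-compactness of the image $\mathcal{K}_1$, the homeomorphism property of $\Phi^{\times I}$), which is a welcome but not substantively different refinement.
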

In terms of terminologies from Kernel Learning, Theorem \ref{thm: main theorem 1} tells us that the signature map $S$ and the rank $2$ signature map $S^2$ are universal feature mappings (see Appendix \ref{sect: kernel learning} for the notion of ``universal feature map'') on $\mathbb{R}^d$--valued path space $(\mathbb{R}^d)^I$ and measure--valued path space $(\mathcal{P}((\mathbb{R}^d)^I))^I$ respectively. As a consequence, they induce the following Signature Maximum Mean Discrepancies (Signature MMDs) for adapted topology of rank $0$ and rank $1$, respectively:
\begin{theorem}\label{thm: main theorem 2}
	Let $I = \{0 = t_0 < t_1 < \ldots < t_N =T\}$ and $\mathcal{K} \subset \mathbb{R}^d$ be a compact subset. Then
	\begin{enumerate}
		\item The rank $1$ Signature MMD 
		$$
		\mathcal{D}^1_{\mathcal{S}}(\mathbb{X}, \mathbb{Y}) := \|\mathbb{E}_{\mathbb{P}}[S(X)] - \mathbb{E}_{\mathbb{Q}}[S(Y)]\|_{\mathcal{H}_1}
		$$
		metrizes the rank $0$ adapted topology $\hat \tau_0$ (i.e., the usual weak topology on laws of stochastic processes) on $\mathcal{F}\mathcal{P}_I(\mathcal{K})$.
		\item The rank $2$ Signature MMD 
		$$
		\mathcal{D}^2_{\mathcal{S}}(\mathbb{X}, \mathbb{Y}) := \|\mathbb{E}_{\mathbb{P}}[S^{2}(\hat X^1)] - \mathbb{E}_{\mathbb{Q}}[S^{2}(\hat Y^1)]\|_{\mathcal{H}_2}
		$$
		metrizes the rank $1$ adapted topology $\hat \tau_1$ (i.e., the Aldous' extended weak convergence, see Definition \ref{def: extended weak convergence}) on $\mathcal{F}\mathcal{P}_I(\mathcal{K})$.
	\end{enumerate}
\end{theorem}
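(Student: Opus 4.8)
The plan is to derive both statements from a single mechanism: showing that a certain \emph{kernel mean embedding} is a homeomorphism onto its image. Set $\mathcal{Y}_0 := \mathcal{K}^I$ and $\mathcal{Y}_1 := \big(\mathcal{P}(\mathcal{K}^I)\big)^I$, so that $\mathcal{L}(\hat X^0) = \mathcal{L}(X) \in \mathcal{P}(\mathcal{Y}_0)$ and $\mathcal{L}(\hat X^1) \in \mathcal{P}(\mathcal{Y}_1)$; recall from Definition~\ref{def: higher rank prediction processes} that for $r \in \{0,1\}$ the topology $\hat\tau_r$ on $\mathcal{F}\mathcal{P}_I(\mathcal{K})$ is, by construction, the one induced from the weak topology of $\mathcal{P}(\mathcal{Y}_r)$ along the map $\mathbb{X} \mapsto \mathcal{L}(\hat X^r)$ (stated at the level of convergent sequences, but since these spaces are metrizable this determines the topology). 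Write $\phi_0 := S : \mathcal{Y}_0 \to \mathcal{H}^1$ and $\phi_1 := S^2 : \mathcal{Y}_1 \to \mathcal{H}^2$ (Definitions~\ref{def: signature rank 0} and~\ref{def: rank 2 sig}) and define
$$
\Phi_r : \mathcal{P}(\mathcal{Y}_r) \to \mathcal{H}^{r+1}, \qquad \Phi_r(\mu) := \int_{\mathcal{Y}_r} \phi_r \, d\mu .
$$
Then $\mathbb{E}_{\mathbb{P}}[S(X)] = \Phi_0(\mathcal{L}(X))$ and $\mathbb{E}_{\mathbb{P}}[S^2(\hat X^1)] = \Phi_1(\mathcal{L}(\hat X^1))$, so $\mathcal{D}^{r+1}_{\mathcal{S}}$ is exactly the pullback, along $\mathbb{X}\mapsto\mathcal{L}(\hat X^r)$, of the Hilbert-norm distance on $\Phi_r(\mathcal{P}(\mathcal{Y}_r))$ (it is a pseudometric on $\mathcal{F}\mathcal{P}_I(\mathcal{K})$, invariant under $\sim_r$, and ``metrizes $\hat\tau_r$'' should be read in this sense). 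Hence it suffices to show: \emph{for $r = 0,1$, the map $\Phi_r$ is a homeomorphism from $\mathcal{P}(\mathcal{Y}_r)$ with the weak topology onto its image in $\mathcal{H}^{r+1}$.}

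I would obtain this from three ingredients. First, \emph{compactness}: $\mathcal{Y}_0 = \mathcal{K}^I$ is a compact metric space, so by Prokhorov's theorem $\mathcal{P}(\mathcal{K}^I)$ is compact metrizable, hence so is the finite product $\mathcal{Y}_1$, and applying Prokhorov once more $\mathcal{P}(\mathcal{Y}_r)$ is compact metrizable for $r = 0,1$. Second, \emph{continuity of $\Phi_r$}: it is enough that $\phi_r$ be continuous and bounded into $\mathcal{H}^{r+1}$. Indeed, for $\mu_n \to \mu$ weakly and any $\ell \in \mathcal{H}^{r+1}$ one has $\langle \ell, \Phi_r(\mu_n)\rangle = \int \langle \ell, \phi_r(y)\rangle\, \mu_n(dy) \to \int \langle \ell, \phi_r(y)\rangle\, \mu(dy) = \langle \ell, \Phi_r(\mu)\rangle$ (the integrand is scalar, continuous, bounded), so $\Phi_r(\mu_n) \rightharpoonup \Phi_r(\mu)$ weakly; since $\phi_r(\mathcal{Y}_r)$ is norm-compact its closed convex hull $C$ is norm-compact, each $\Phi_r(\mu_n)$ and $\Phi_r(\mu)$ lies in $C$, and on $C$ the weak and norm topologies coincide, so the convergence is in norm. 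For $\phi_0 = S$, continuity and boundedness hold because $S(x)$ is a finite product of tensor exponentials of increments ranging over a bounded set, using that $\mathcal{H}^1$ carries a continuous product for which these operations are continuous (Appendix~\ref{subsect: hilbert space on tensor algebra}). For $\phi_1 = S^2$ the same reasoning applies, provided $\mu \mapsto \bar\mu_t = \int S(x)\,\mu_t(dx)$ is continuous and bounded as an $\mathcal{H}^1$-valued map --- which is precisely the $r=0$ statement just proved --- whence $S^2(\mu) = \prod_i \exp_1(t_i-t_{i-1},\, \bar\mu_{t_i}-\bar\mu_{t_{i-1}})$ is continuous and bounded into $\mathcal{H}^2$. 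Third, \emph{injectivity}, which is exactly Theorem~\ref{thm: main theorem 1}: $\Phi_0(\mathcal{L}(X)) = \Phi_0(\mathcal{L}(Y)) \Rightarrow \mathcal{L}(X) = \mathcal{L}(Y)$, and $\Phi_1(\mathcal{L}(\hat X^1)) = \Phi_1(\mathcal{L}(\hat Y^1)) \Rightarrow \mathcal{L}(\hat X^1) = \mathcal{L}(\hat Y^1)$.

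With these three facts in hand, $\Phi_r$ is a continuous injection from a compact Hausdorff space into a metric space, hence a homeomorphism onto its image; transporting this identification back along $\mathbb{X}\mapsto\mathcal{L}(\hat X^r)$ shows that $\mathcal{D}^{r+1}_{\mathcal{S}}$ induces exactly $\hat\tau_r$, which gives (1) and (2). Part (1) is essentially the classical signature-MMD metrization of the weak topology (cf.\ \cite{chevyrev2018signature,cass2020computing,lemercier2021distribution}) and serves as the base case feeding into part (2).

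The step I expect to be the main obstacle is the continuity and boundedness of the rank-$2$ feature map $\phi_1 = S^2$: one must verify carefully that the whole chain --- forming the measure-valued martingale $\hat X^1$, passing to its expected-signature path in $\mathcal{H}^1$, then applying the time-augmented signature $S_{\mathcal{H}^1}$ --- is continuous end to end, which leans on the Hilbert-algebra estimates of Appendix~\ref{subsect: hilbert space on tensor algebra} and on chaining the $r=0$ mean embedding into the $r=1$ feature map. A secondary subtlety is that Theorem~\ref{thm: main theorem 1} delivers injectivity of $\Phi_1$ only on the locus $\Pi_1 := \{\mathcal{L}(\hat X^1) : \mathbb{X} \in \mathcal{F}\mathcal{P}_I(\mathcal{K})\}$, so to run the compactness argument one should either note that $\Pi_1$ is closed in $\mathcal{P}(\mathcal{Y}_1)$ --- equivalently, that $\mathcal{F}\mathcal{P}_I(\mathcal{K})$ modulo $\sim_1$ is compact, see \cite{backhoff2020all,Bartl2021Wasserstein} --- or, alternatively, observe that $S^2$ is in fact a universal feature map on all of $\big(\mathcal{P}(\mathcal{K}^I)\big)^I$, via the compact embedding $\mu \mapsto \bar\mu$ of $\mathcal{P}(\mathcal{K}^I)$ into $\mathcal{H}^1$ together with the law-characterising property of the expected time-augmented signature for discrete paths valued in a compact subset of a Hilbert space (cf.\ \cite{bonnier2020adapted,chevyrev2018signature}).
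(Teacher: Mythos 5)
Your proof is correct and rests on the same two pillars as the paper's own argument: compactness of $\mathcal{P}(\mathcal{K}^I)$ and of $\mathcal{P}\big(\mathcal{P}(\mathcal{K}^I)^I\big)$, and the injectivity of the expected rank $1$ and rank $2$ signature maps supplied by Theorem \ref{thm: main theorem 1}. The packaging differs. The paper argues through a chain of equivalences driven by the Stone--Weierstrass theorem applied to the point-separating algebra $\{\langle \ell, S(\cdot)\rangle\}$ of \eqref{eq: point separating algebra}: weak convergence of laws is equivalent to convergence of all linear functionals of the expected signature, which is then identified with convergence in $\mathcal{D}^{r+1}_{\mathcal{S}}$. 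You instead abstract everything into the statement that the kernel mean embedding $\Phi_r$ is a continuous injection from a compact Hausdorff space, hence a homeomorphism onto its image. The two routes are logically interchangeable, but yours has the merit of making explicit a step that the paper's final equivalence glosses over: upgrading weak convergence of $\mathbb{E}_{\mathbb{P}^n}[S(X^n)]$ in $\mathcal{H}^1$ (which is all that convergence of the functionals $\langle \ell, \cdot\rangle$ for $\ell$ in a dense subspace delivers) to norm convergence, which you obtain from norm-compactness of the closed convex hull of $S(\mathcal{K}^I)$ together with the coincidence of the weak and norm topologies on norm-compact sets. Your closing remark on the injectivity domain of $\Phi_1$ is also well taken; the paper implicitly adopts the second of your two resolutions (universality of $S^2$ on all of $\mathcal{P}(\mathcal{K}^I)^I$, via \eqref{eq: point separating algebra} applied to $\mathcal{H}^1$-valued paths), which is the cleaner option since it avoids having to show that the image locus of $\mathbb{X}\mapsto\mathcal{L}(\hat X^1)$ is closed.
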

One can find a proof of the above theorem in \cite[Theorem 4]{bonnier2020adapted}. For reader's convenience, We will give a short proof of Theorem \ref{thm: main theorem 2} in Appendix \ref{appendix: Proof}.

\begin{remark}\label{remark: a discussion on general filtration cases}
	In fact, given the above rank $2$ signature map $S^2$, one can construct the rank $3$ signature map $S^3$ for rank $2$ prediction processes in terms of $S^2$ and the (time--augmented) signature mapping $S_{\mathcal{H}^2}$ on paths valued in $\mathcal{H}^2$ exactly as we construct $S^2$ out of $S = S^1$. Continuing the procedure, we can recursively obtain rank $r$ signature maps for rank $r-1$ prediction processes and the resulting rank $r$ Signature MMD $\mathcal{D}_{\mathcal{S}}^r$ metrizes the rank $r-1$ adapted topology for all $r \ge 1$, see Appendix \ref{sect: higher rank adapted topologies} for more details.
\end{remark}

\begin{remark}\label{remark: relation with adapted Wasserstein metric}
	Another popular metric for characterizing adapted topologies is the so--called adapted (or, bi--causal) Wasserstein distance (see e.g. \cite{backhoff2020all} and \cite{Bartl2021Wasserstein}) which reads that
	$$
	\mathcal{A}\mathcal{W}_p(\mathbb{X}, \mathbb{Y}) = \inf_{\pi \in \text{CPL}_{\text{bc}}(\mathbb{X}, \mathbb{Y})} \int d_{(\mathbb{R}^d)^I} (x, y)^p \pi(dx, dy),
	$$
	where $\text{CPL}_{\text{bc}}$ is the set of bi--causal couplings such that the constraints regarding filtrations are contained in this bi--causality requirement, see \cite[Definition 2.1]{Bartl2021Wasserstein}. In the seminal work \cite{backhoff2020all} the authors proved that for $I = \{0 = t_0 < t_1 < \ldots < t_N = T\}$ the extended weak convergence topology $\hat \tau_1$ is equivalent to the topology induced by $\mathcal{A}\mathcal{W}_p$ on $\Lambda_{\text{plain}}(\mathcal{K})$, which means that the rank $2$ Signature MMD $\mathcal{D}^2_{\mathcal{S}}$ generates the same topology as adapted Wasserstein metric on $\Lambda_{\text{plain}}(\mathcal{K})$. However, this equivalence fails when the filtrations are not natural, see \cite{Bartl2021Wasserstein}. In this case the adapted Wasserstein metric is equivalent to rank $N+1$ Signature MMD in the sense that they generate the same topology $\hat \tau_N$, see Theorem \ref{thm: equivalence of adapted wasserstein metric and higher rank MMD} in Appendix \ref{sect: higher rank adapted topologies}. One can find a nice survey of various topologies which can characterize the difference of filtrations (including adapted topologies and the topology metrized by the adapted Wasserstein distance) in the recent preprint \cite{Gudi2022adapted}.
\end{remark}
Since the value function in Optimal Stopping Problem (OSP)
$$
v(\mathbb{X}) := \sup_{\tau \in \mathcal{T}_{\mathbb{X}}}\mathbb{E}_{\mathbb{P}}[\gamma(X_{\tau})].
$$
is continuous on $\Lambda_{\text{plain}}$ with respect to the Aldous' extended weak convergence (see Theorem \ref{theorem: continuity of OST for extended weak convergence}), using Theorem \ref{thm: main theorem 2} above we obtain that $v(\mathbb{X}^n) \to v(\mathbb{X})$ provided $\mathcal{D}^{2}_{\mathcal{S}}(\mathbb{X}^n, \mathbb{X}) \to 0$ for $\mathbb{X}^n, \mathbb{X} \in \Lambda_{\text{plain}}(\mathcal{K})$. This observation together with a classical result from Kernel Learning implies the following important claim, which is the theoretical foundation of our numerical scheme for solving Optimal Stopping Problem. We postpone the proof of the following theorem to Appendix \ref{appendix: Proof}. \\
Let us recall the notion of strict topology introduced at the beginning of Chapter 2. According to \cite{Giles1972strict} (see also \cite[Definition 8]{chevyrev2018signature}), for a topological space $\mathcal{X}$, the strict topology on $C_b(\mathcal{X},\mathbb{R})$ is the topology generated by seminorms $p_\psi(f) = \sup_{x \in \mathcal{X}}|f(x)\psi(x)|$, where $\psi$ vanishes at infinity in the sense that for any $\varepsilon>0$ there is a compact set $K \subset \mathcal{X}$ such that $\sup_{x \notin K}|\psi(x)|\le \varepsilon$. Clearly the strict topology is weaker than the uniform topology but stronger than the topology of uniform convergence on compact sets.
\begin{theorem}\label{thm: main theorem 3}
	Let $\sigma > 0$ be fixed. We define the kernel $K^2_{\mathcal{S}}: \Lambda_{\text{plain}}(\mathcal{K}) \times \Lambda_{\text{plain}}(\mathcal{K}) \to \mathbb{R}$,
	$$
	K^2_{\mathcal{S}}(\mathbb{X}, \mathbb{Y}) := \exp(-\sigma^2 \mathcal{D}^2_{\mathcal{S}}(\mathbb{X}, \mathbb{Y})^2)
	$$
	Then the Reproducing Kernel Hilbert Space (RKHS) generated by $K^2_{\mathcal{S}}$ is dense in the space $C_b(\Lambda_{\text{plain}}(\mathcal{K}),\hat \tau_{1})$ with respect to the strict topology. In particular, if $v(\cdot)$ denotes the value function in OSP, then for any $\varepsilon > 0$ and $\mathbb{X} \in \Lambda_{\text{plain}}(\mathcal{K})$, there exist an $M \in \mathbb{N}$, $\mathbb{Y}^i \in \Lambda_{\text{plain}}(\mathcal{K})$ and scalars $a_i \in \mathbb{R}$, $i=1,\ldots,M$ such that 
	$$
	\Big|v(\mathbb{X}) - \sum_{i=1}^M a_i \exp(-\sigma^2 \mathcal{D}^2_{\mathcal{S}}(\mathbb{X}, \mathbb{Y}^i)^2)\Big| \le \varepsilon,
	$$
	and this inequality holds uniformly on every compact set on $\Lambda_{\text{plain}}(\mathcal{K})$ with respect to $\hat \tau_1$.
\end{theorem}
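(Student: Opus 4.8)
The plan is to recognise $K^2_{\mathcal S}$ as a Gaussian kernel composed with a universal, injective, continuous feature map, and then to transport the known universality of Gaussian kernels on separable Hilbert spaces back to $\Lambda_{\text{plain}}(\mathcal K)$ through that map. Concretely, I would set $\Phi:\Lambda_{\text{plain}}(\mathcal K)\to\mathcal H^2$, $\Phi(\mathbb X):=\mathbb E_{\mathbb P}[S^2(\hat X^1)]$, so that $K^2_{\mathcal S}=k_\sigma\circ(\Phi\times\Phi)$ with $k_\sigma(a,b):=\exp(-\sigma^2\|a-b\|_{\mathcal H^2}^2)$ the Gaussian kernel on $\mathcal H^2$. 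By Theorem \ref{thm: main theorem 1} the map $\Phi$ is injective (on the state space on which $\hat\tau_1$ is a genuine topology, i.e. up to $\sim_1$), and by Theorem \ref{thm: main theorem 2} the map $\mathbb X\mapsto\|\Phi(\mathbb X)-\Phi(\mathbb Y)\|_{\mathcal H^2}=\mathcal D^2_{\mathcal S}(\mathbb X,\mathbb Y)$ metrizes $\hat\tau_1$; hence $K^2_{\mathcal S}$ is a continuous kernel and $\Phi$ is a homeomorphism of $(\Lambda_{\text{plain}}(\mathcal K),\hat\tau_1)$ onto its image $E:=\Phi(\Lambda_{\text{plain}}(\mathcal K))\subseteq\mathcal H^2$. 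Moreover $(\Lambda_{\text{plain}}(\mathcal K),\hat\tau_1)$ is Polish (it carries the adapted Wasserstein topology, cf. Remark \ref{remark: relation with adapted Wasserstein metric}), and $E$ is a bounded subset of $\mathcal H^2$ since $X$ is $\mathcal K$-valued and $I$ finite, so $\|S^2(\hat X^1)\|_{\mathcal H^2}$ is uniformly bounded.

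Next I would invoke the universality of the Gaussian kernel. The elementary fact is that $k_\sigma$ restricted to any \emph{compact} subset of $\mathcal H^2$ has RKHS dense in the continuous functions there; the sharper statement needed here --- density of $\mathcal H_{k_\sigma|_E}$ in $C_b(E)$ for the \emph{strict} topology on the non-compact set $E$ --- amounts to $k_\sigma$ being a \emph{characteristic} kernel on $E$, which I would obtain from the characterisation of characteristic kernels by strict-topology density on Polish spaces together with the fact that Gaussian kernels remain characteristic on separable Hilbert spaces (see \cite{chevyrev2018signature} and the references therein, and \cite{Salvi2021HighRank} for the higher-order mean-embedding setting). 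Granting this, and using the standard description of the RKHS of a pulled-back kernel, $\mathcal H_{K^2_{\mathcal S}}=\{g\circ\Phi:g\in\mathcal H_{k_\sigma|_E}\}$. Since $\Phi$ is a homeomorphism onto $E$, precomposition by $\Phi$ is a linear bijection $C_b(E)\to C_b(\Lambda_{\text{plain}}(\mathcal K),\hat\tau_1)$ that is bicontinuous for the respective strict topologies (vanishing-at-infinity and compactness are topological notions, so the two seminorm systems correspond). Therefore $\mathcal H_{K^2_{\mathcal S}}$ is dense in $C_b(\Lambda_{\text{plain}}(\mathcal K),\hat\tau_1)$ for the strict topology.

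For the displayed inequality: $\gamma$ is bounded and continuous, so by Theorem \ref{theorem: continuity of OST for extended weak convergence} the value function $v$ lies in $C_b(\Lambda_{\text{plain}}(\mathcal K),\hat\tau_1)$. As the strict topology is finer than uniform convergence on $\hat\tau_1$-compact sets, $v$ can be approximated within $\varepsilon/2$, uniformly on any prescribed $\hat\tau_1$-compact set, by some $g\in\mathcal H_{K^2_{\mathcal S}}$; finite linear combinations $\sum_{i=1}^M a_i K^2_{\mathcal S}(\cdot,\mathbb Y^i)$ with $\mathbb Y^i\in\Lambda_{\text{plain}}(\mathcal K)$ form a dense subspace of $\mathcal H_{K^2_{\mathcal S}}$, and the reproducing property gives $\|f\|_\infty\le\|f\|_{\mathcal H_{K^2_{\mathcal S}}}\sup_{\mathbb Y}K^2_{\mathcal S}(\mathbb Y,\mathbb Y)^{1/2}=\|f\|_{\mathcal H_{K^2_{\mathcal S}}}$, so one may replace $g$ by such a combination within another $\varepsilon/2$, which yields exactly the claimed estimate, uniform on the given compact set.

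The main obstacle is the universality step of the second paragraph: upgrading the textbook universality of Gaussian kernels on \emph{compact} subsets of a Hilbert space to a \emph{strict-topology} density statement on the non-compact space $\Lambda_{\text{plain}}(\mathcal K)$. This is precisely where a Stone--Weierstrass argument is insufficient and one must bring in the characteristic-kernel machinery, as well as the (dimension-robust) characteristicness of Gaussian kernels in infinite dimensions; much of this is available off the shelf, but the $\hat\tau_1$-specific input --- that $\mathcal D^2_{\mathcal S}$ genuinely metrizes $\hat\tau_1$ so that $\Phi$ is a homeomorphism onto its image (Theorem \ref{thm: main theorem 2}), not merely continuous and injective --- is what makes the RKHS pull-back identity and the continuity of precomposition by $\Phi$ for the strict topologies go through.
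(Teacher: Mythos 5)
Your overall architecture (feature map $\Phi(\mathbb{X})=\mathbb{E}_{\mathbb{P}}[S^2(\hat X^1)]$, injectivity from Theorem \ref{thm: main theorem 1}, metrization from Theorem \ref{thm: main theorem 2}, pull-back of the RKHS, and the final $\varepsilon/2$-splitting via kernel sections and the sup-norm bound) parallels the paper's, and your last paragraph is fine. The genuine gap is exactly the step you flag as ``the main obstacle'' and then declare available off the shelf: the strict-topology density of the RKHS of the Gaussian kernel $k_\sigma$ on the non-compact, non-closed image $E=\Phi(\Lambda_{\text{plain}}(\mathcal{K}))\subset\mathcal{H}^2$. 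Two of your supporting claims do not hold up. First, $(\Lambda_{\text{plain}}(\mathcal{K}),\hat\tau_1)$ is not known to be Polish: the paper explicitly records (in the remark following this theorem) that it is not complete for $\hat\tau_1$, and carrying a metric does not give Polishness --- you would need it to be $G_\delta$ in its compact completion $\mathcal{F}\mathcal{P}_I(\mathcal{K})$, which is not established --- so the ``characteristic $\Leftrightarrow$ strictly dense'' equivalence you want to invoke on Polish spaces is not licensed. Second, ``Gaussian kernels remain characteristic on separable Hilbert spaces'' in the strict-topology sense needed here is not contained in the references you cite; the Fourier-analytic proofs of characteristicness are finite-dimensional, and the Christmann--Steinwart universality of Gaussian-type kernels in Hilbert spaces is a statement about \emph{compact} subsets only.

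The paper closes precisely this gap by a different mechanism. It first works on the \emph{compact} ambient space $(\mathcal{F}\mathcal{P}_I(\mathcal{K}),\hat\tau_1)$ (compactness is \cite[Theorem 5.1, Lemma 4.7]{Bartl2021Wasserstein}, an input your argument never uses) and with the un-normalized kernel $K_{\exp}(\mathbb{X},\mathbb{Y})=\exp(\langle\mathbb{E}_{\mathbb{P}}[S^2(\hat X^1)],\mathbb{E}_{\mathbb{Q}}[S^2(\hat Y^1)]\rangle_{\mathcal{H}^2})$, whose RKHS contains a point-separating \emph{algebra} of linear functionals of $\mathbb{E}_{\mathbb{P}}[S^2(\hat X^1)]$ (the algebra structure coming from the shuffle identity \eqref{eq: shuffle product relation}); Giles' strict Stone--Weierstrass theorem applied on the non-compact subset $\Lambda_{\text{plain}}(\mathcal{K})$ then yields strict density of this RKHS in $C_b(\Lambda_{\text{plain}}(\mathcal{K}),\hat\tau_1)$. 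Finally the normalization $\alpha(\mathbb{X})=K_{\exp}(\mathbb{X},\mathbb{X})^{-1/2}$, bounded thanks to the compactness of the ambient space, converts the canonical feature map of $K_{\exp}$ into that of $K^2_{\mathcal{S}}=\exp(-\sigma^2\mathcal{D}^2_{\mathcal{S}}(\cdot,\cdot)^2)$ while preserving strict density. If you wish to keep your Gaussian-on-$E$ formulation, you must reproduce essentially this argument on $E$: expand $\exp(2\sigma^2\langle a,b\rangle)$ into tensor powers, apply Giles to the resulting polynomial algebra on the bounded set $E$, and normalize by $e^{-\sigma^2\|a\|^2}$, which is bounded away from $0$ there. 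As written, the universality step is asserted rather than proved.
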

\begin{remark}
	Note that in the above theorem the RKHS of $K^2_{\mathcal{S}}$ is only dense in the space $C_b(\Lambda_{\text{plain}}(\mathcal{K}),\hat \tau_1)$ with respect to the strict topology instead of the uniform topology. This is because the set $\Lambda_{\text{plain}}(\mathbb{R}^d)$ of stochastic processes with their natural filtrations is not complete with respect to the rank $1$ topology (see e.g. \cite{backhoff2020all} and \cite{Backhoff2020distance}) and therefore the set $\Lambda_{\text{plain}}(\mathcal{K})$ is in general not compact for the rank $1$ topology although the state space $\mathcal{K}$ is compact (this observation again reveals the difference between the usual weak topology and the higher rank adapted topology). Hence the strict topology introduced in \cite{Giles1972strict} seems to be the correct notion for the approximation to the value functions of OSP related to natural filtration. For a concrete criterion for characterizing the compactness of $\Lambda_{\text{plain}}(\mathbb{R}^d)$ we refer readers to \cite{Eder2019compactness}.
\end{remark}

\begin{remark}\label{remark: OSP for general filtered processes}
	Thanks to Remarks \ref{remark: a discussion on general filtration cases} and \ref{remark: relation with adapted Wasserstein metric} the above theorem can be generalized to rank $r$ adapted topologies for any $r \ge 2$. In particular, our kernel learning approach remains valid for OSP defined on filtered processes with general filtration. Again, we refer readers to Appendix \ref{sect: higher rank adapted topologies} or \cite{bonnier2020adapted} for a more  detailed discussion of higher rank signatures and their applications in solving OSP.
\end{remark}
The rest of this paper will be devoted to provide an efficient numerical scheme based on Theorem \ref{thm: main theorem 3} to perform a signature kernel regression of value functions $v(\cdot)$ from OSP; in particular, we will give a simple and precise way to compute the rank $2$ Signature MMD $\mathcal{D}^2_{\mathcal{S}}$. 

\section{Algorithm}\label{sect: Regression algorithm}
In view of Theorem \ref{thm: main theorem 3}, we see that the value function $v(\cdot)$ of OSP defined on $\Lambda_{\text{plain}}(\mathcal{K}) \subset \mathcal{F}\mathcal{P}_I(\mathcal{K})$ can be approximated by performing the so--called \textit{Distribution Regression} on kernels
$$
\exp(-\sigma^2 \mathcal{D}^2_{\mathcal{S}}(\cdot,\cdot)^2).
$$
In particular, to find a numerical solution for value functions $v(\cdot)$ from OSP, our task now boils down to compute the rank $2$ Signature MMD $\mathcal{D}^2_{\mathcal{S}}(\cdot,\cdot)$ numerically.\\
However, the calculation of the rank $2$ Signature MMD $\mathcal{D}^2_{\mathcal{S}}(\cdot,\cdot)$ is highly non--trivial: By the definition of $\mathcal{D}^2_{\mathcal{S}}(\cdot,\cdot)$ given in Theorem \ref{thm: main theorem 2}, namely
$$
\mathcal{D}^2_{\mathcal{S}}(\mathbb{X},\mathbb{Y}) = \|\mathbb{E}_{\mathbb{P}}[S^2(\hat X^1)] - \mathbb{E}_{\mathbb{Q}}[S^2(\hat Y^1)]\|_{\mathcal{H}^2},
$$
where $S^2(\hat X^1) = S_{\mathcal{H}^1}(t \mapsto \mathbb{E}_{\mathbb{P}}[S(X)|\mathcal{F}_t])$, one may expect to split the numerical scheme for getting $\mathcal{D}^2_{\mathcal{S}}(\mathbb{X},\mathbb{Y})$ into the following steps:
\begin{enumerate}
	\item Estimate the conditional distribution $\mathbb{P}(X \in \cdot| \mathcal{F}_t)$ for each $t \in I$ via sampling paths $X|_{I_t}$ under $\mathbb{P}$ (where $I_t := \{s \in I: s \le t\}$).
	\item Compute the conditional expectation $\mathbb{E}_{\mathbb{P}}[S(X)|\mathcal{F}_t]$ for each $t \in I$ through sampling paths according to the conditional distribution $\mathbb{P}(X \in \cdot| \mathcal{F}_t)$. 
	\item Compute the signature of the $\mathcal{H}^1$--valued path $t \mapsto \mathbb{E}_{\mathbb{P}}[S(X)|\mathcal{F}_t]$ and the expected rank $2$ signature $\mathbb{E}_{\mathbb{P}}[S^2(\hat X^1)]$ by suing sample paths of $X$ obtained in step 1.
\end{enumerate}
The most crucial weakness of the above approach are two fold: First, the sampling of the conditional distribution $\mathbb{P}(X \in \cdot| \mathcal{F}_t)$ is usually not easy; secondly, as signature maps take values in infinite dimensional spaces, one has to do truncation of signatures in order to find numerical solutions; so, if the dimension $d$ of the codomain of $X$ is high, one needs to pay a huge cost to compute truncated signatures (as the dimension of truncated signatures grows exponentially). Even worse, here we also need to consider the truncation of rank $2$ signature $S^2$ which takes values in the tensor algebra of an infinite dimensional space with very complicated algebraic structure (see \cite[Appendix B.2]{bonnier2020adapted}). As a consequence, the above approach involved with truncation procedure seems not feasible in practice. \\
In this section we will show that, thanks to the Hilbert space structures of $\mathcal{H}^1$ and $\mathcal{H}^2$ and the special dynamics satisfied by signatures $S$ and $S^2$, we can compute $\mathcal{D}^2_{\mathcal{S}}(\cdot,\cdot)$ \textit{without} sampling of conditional distributions or doing any truncation of signatures; instead, one only needs to generate samples of $X$ under $\mathbb{P}$ and solve a finite families of $2$--dimensional linear hyperbolic PDEs of the same type. The main recipe of our new algorithm is the so--called Higher Order Conditional signature kernel Mean Embedding (Higher Order CKME) introduced in \cite{Salvi2021HighRank} and the Signature PDE trick developed in \cite{cass2020computing}, as we will see below.

\subsection{Conditional Signature Kernel Mean Embedding}\label{sect: conditional KME}
In order to compute the rank $2$ Signature MMD $\mathcal{D}_{\mathcal{S}}^2(\mathbb{X},\mathbb{Y}) = \|\mathbb{E}_{\mathbb{P}}[S^2(\hat X^1)] - \mathbb{E}_{\mathbb{Q}}[S^2(\hat Y^1)]\|_{\mathcal{H}^2}$ numerically, we first reformulate this distance in terms of conditional signature kernel mean embedding (rather than using rank $2$ signatures directly), which will allow us to design an efficient numerical scheme via tools from Kernel Learning, see the next subsection.\\
Let $S$ be the (time--augmented) signature for $\mathbb{R}^d$--valued paths defined as in Definition \ref{def: signature rank 0}, the associated signature kernel $k_S$ is defined by
\begin{equation}\label{eq: first order signature kernel}
	k_S(x,y) := \langle S(x), S(y) \rangle_{\mathcal{H}^1}, \quad x,y \in (\mathbb{R}^d)^I.
\end{equation}
Let $\mathcal{H}_{\mathcal{S}}$ denote the Reproducing Kernel Hilbert Space (RKHS) with $k_S$ being the reproducing kernel. Then for filtered process $\mathbb{X} \in \Lambda_{\text{plain}}(\mathcal{K}) \subset \mathcal{F}\mathcal{P}_I(\mathcal{K})$ with $I=\{0=t_0 <t_1 < \ldots < t_N = T\}$ we define its (Signature) Kernel Mean Embedding (KME) as
$$
\mu_X := \int_{(\mathcal{K})^I} k_S(x, \cdot) \mathbb{P}_X (dx),
$$
which is a well--defined $\mathcal{H}_{\mathcal{S}}$--valued Bochner integral as $\mathcal{K}$ is compact and the signature map $S$ is continuous (see sect. \ref{subsect: analytic signature}). Then for another filtered process $\mathbb{Y} \in \Lambda_{\text{plain}}(\mathcal{K})$, using the reproducing property 
\begin{equation}\label{eq: reproducing property}
	\langle k_S(x, \cdot), k_S(y,\cdot) \rangle_{\mathcal{H}_{\mathcal{S}}} = \langle S(x), S(y) \rangle_{\mathcal{H}^1}    
\end{equation}
we obtain another formulation of $1$--st rank MMD:
\begin{align*}
	\mathcal{D}^1_{\mathcal{S}}(\mathbb{X}, \mathbb{Y})^2 &=  \|\mathbb{E}_{\mathbb{P}}[S(X)] - \mathbb{E}_{\mathbb{Q}}[S(Y)]\|_{\mathcal{H}^1}^2\\
	&= \int  \langle S(x), S(x^\prime) \rangle_{\mathcal{H}^1} \mathbb{P}_X\otimes \mathbb{P}_X(dx, dx^\prime) \\
	&\quad + \int \langle S(y), S(y^\prime) \rangle_{\mathcal{H}^1} \mathbb{Q}_Y\otimes \mathbb{Q}_Y(dy, dy^\prime)\\
	&\quad - 2\int  \langle S(x), S(y) \rangle_{\mathcal{H}^1} \mathbb{P}_X\otimes \mathbb{Q}_Y(dx, dy)\\
	&=\|\mu_X - \mu_Y\|_{\mathcal{H}_{\mathcal{S}}}^2.
\end{align*}
To obtain a similar expression for $2$--nd rank MMD, we define the Conditional Kernel Mean Embedding (CKME) of $\hat X^1_t = \mathbb{P}(X \in \cdot |\mathcal{F}_t)$ via
\begin{equation}\label{eq: CKME}
	\mu_{\hat X^1_t}   =  \int k_S(x, \cdot) \hat X^1_t(dx) = \mathbb{E}_{\mathbb{P}}[k_S(X,\cdot) |\mathcal{F}_t].
\end{equation}
Clearly $\mu_{\hat X} = (\mu_{\hat X^1_t})_{t \in I}$ is an $\mathcal{H}_{\mathcal{S}}$--valued stochastic process. Let $S_{\mathcal{H}_{\mathcal{S}}}$ be the (time--augmented) signature map on $\mathcal{H}_{\mathcal{S}}$--valued paths and $k_{S_{\mathcal{H}_{\mathcal{S}}}}$ be the associated signature kernel, which results in a ``higher rank'' RKHS $\mathcal{H}_{\mathcal{S}}^2$ with reproducing kernel $k_{S_{\mathcal{H}_{\mathcal{S}}}}$. More explicitly, $\mathcal{H}^2_{\mathcal{S}}$ is a Hilbert space contained in the space of real--valued continuous functions defined on $(\mathcal{H}_{\mathcal{S}})^I$ such that the following reproducing property holds true for all $\hat x, \hat y \in (\mathcal{H}_{\mathcal{S}})^I$:
\begin{equation}\label{eq: second order signature kernel}
	\langle k_{S_{\mathcal{H}_{\mathcal{S}}}}(\hat x, \cdot), k_{S_{\mathcal{H}_{\mathcal{S}}}}(\hat y,\cdot) \rangle_{\mathcal{H}_{\mathcal{S}}^2} = \langle S_{\mathcal{H}_{\mathcal{S}}}(\hat x), S_{\mathcal{H}_{\mathcal{S}}}( \hat y) \rangle_{\widetilde{\mathbb{R} \oplus \mathcal{H}_{\mathcal{S}}}}
\end{equation}
where the Hilbert space $\widetilde{\mathbb{R} \oplus \mathcal{H}_{\mathcal{S}}} \subset \textbf{T}((\mathbb{R} \oplus \mathcal{H}_{\mathcal{S}}))$ is defined as in Appendix \ref{subsect: hilbert space on tensor algebra}.
We define the $2$--nd  order (signature) Kernel Mean Embedding of $\mathbb{X} \in \mathcal{F}\mathcal{P}_I(\mathcal{K})$ by
$$
\mu^2_{\hat X^1} := \int_{\hat x  \in (\mathcal{H}_{\mathcal{S}})^I} k_{S_{\mathcal{H}_{\mathcal{S}}}}(\hat x, \cdot) \mathbb{P}_{\mu_{\hat X^1}}(d\hat x) \in \mathcal{H}_{\mathcal{S}}^2.
$$
\begin{lemma}\label{lemma: MMD by RKHS}
	For $\mathbb{X}, \mathbb{Y} \in \mathcal{F}\mathcal{P}_I(\mathcal{K})$, 
	$$
	\mathcal{D}^2_{\mathcal{S}}(\mathbb{X}, \mathbb{Y}) = \|\mu^2_{\hat X^1} - \mu^2_{\hat Y^1}\|_{\mathcal{H}_{\mathcal{S}}^2}.
	$$
\end{lemma}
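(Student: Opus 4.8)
The plan is to reduce the identity to the two reproducing properties by exhibiting a single isometric embedding that simultaneously identifies the conditional kernel mean embedding with the expected signature and the rank-$2$ signature with a first-order signature of an $\mathcal{H}_{\mathcal{S}}$-valued path. By Moore--Aronszajn the RKHS $\mathcal{H}_{\mathcal{S}}$ of $k_S$ is $\overline{\mathrm{span}}\{k_S(x,\cdot):x\in(\mathcal{K})^I\}$, and the assignment $k_S(x,\cdot)\mapsto S(x)$ extends to a linear isometry $\iota:\mathcal{H}_{\mathcal{S}}\hookrightarrow\mathcal{H}^1$ onto $\overline{\mathrm{span}}\{S(x)\}$, precisely because $\langle k_S(x,\cdot),k_S(y,\cdot)\rangle_{\mathcal{H}_{\mathcal{S}}}=\langle S(x),S(y)\rangle_{\mathcal{H}^1}$. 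Since $\iota$ is bounded it commutes with Bochner integrals and with conditional expectations, so applying it to the defining formula \eqref{eq: CKME} gives $\iota(\mu_{\hat X^1_t})=\int S(x)\,\hat X^1_t(dx)=\mathbb{E}_{\mathbb{P}}[S(X)\mid\mathcal{F}_t]=\bar{\hat X}^1_t$ for every $t\in I$; that is, $\iota$ carries the $\mathcal{H}_{\mathcal{S}}$-valued path $t\mapsto\mu_{\hat X^1_t}$ onto the $\mathcal{H}^1$-valued path $t\mapsto\bar{\hat X}^1_t$ of Definition \ref{def: rank 2 sig}, and likewise for $\mathbb{Y}$.

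Next I would invoke the functoriality of the (time-augmented) signature under isometric changes of the underlying Hilbert space. The isometry $\iota$ induces an isometry $\mathrm{id}_{\mathbb{R}}\oplus\iota$ of $\mathbb{R}\oplus\mathcal{H}_{\mathcal{S}}$ into $\mathbb{R}\oplus\mathcal{H}^1$, hence an isometric algebra homomorphism $\hat\iota:\widetilde{\mathbb{R}\oplus\mathcal{H}_{\mathcal{S}}}\to\mathcal{H}^2$ of the corresponding tensor-algebra Hilbert spaces, respecting tensor powers, the concatenation product and the tensor exponential. Because $S_{\mathcal{H}_{\mathcal{S}}}(\hat x)$ and $S_{\mathcal{H}^1}(\iota\circ\hat x)$ are both finite products of tensor exponentials of the (time-augmented) increments of $\hat x$ resp. $\iota\circ\hat x$, one gets $\hat\iota\big(S_{\mathcal{H}_{\mathcal{S}}}(\hat x)\big)=S_{\mathcal{H}^1}(\iota\circ\hat x)$ for every discrete $\mathcal{H}_{\mathcal{S}}$-valued path $\hat x$. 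Taking $\hat x=(\mu_{\hat X^1_t})_{t\in I}$ and using $\iota(\mu_{\hat X^1_t})=\bar{\hat X}^1_t$ from the previous step yields $\hat\iota\big(S_{\mathcal{H}_{\mathcal{S}}}(\mu_{\hat X})\big)=S^2(\hat X^1)$, and since $\hat\iota$ is an isometry,
\[
\langle S^2(\hat X^1),S^2(\hat Y^1)\rangle_{\mathcal{H}^2}
=\langle S_{\mathcal{H}_{\mathcal{S}}}(\mu_{\hat X}),S_{\mathcal{H}_{\mathcal{S}}}(\mu_{\hat Y})\rangle_{\widetilde{\mathbb{R}\oplus\mathcal{H}_{\mathcal{S}}}}
=\langle k_{S_{\mathcal{H}_{\mathcal{S}}}}(\mu_{\hat X},\cdot),k_{S_{\mathcal{H}_{\mathcal{S}}}}(\mu_{\hat Y},\cdot)\rangle_{\mathcal{H}^2_{\mathcal{S}}},
\]
the last equality being the reproducing property \eqref{eq: second order signature kernel}; the analogous identities hold with $\mathbb{X}$ or $\mathbb{Y}$ in both slots.

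Finally I would expand both squared norms and push the expectations through the inner products. Writing $\mathcal{D}^2_{\mathcal{S}}(\mathbb{X},\mathbb{Y})^2=\|\mathbb{E}_{\mathbb{P}}[S^2(\hat X^1)]\|^2-2\langle\mathbb{E}_{\mathbb{P}}[S^2(\hat X^1)],\mathbb{E}_{\mathbb{Q}}[S^2(\hat Y^1)]\rangle+\|\mathbb{E}_{\mathbb{Q}}[S^2(\hat Y^1)]\|^2$ in $\mathcal{H}^2$ and using that the inner product of two Bochner integrals equals the double integral of the inner products --- legitimate since $\mathcal{K}$ is compact and $S$ is continuous, so $\|S^2(\hat X^1)\|_{\mathcal{H}^2}$ is a.s.\ bounded and $\omega\mapsto S^2(\hat X^1)$ is measurable --- each term becomes a double integral of $\langle S^2(\hat X^1),S^2(\hat Y^1)\rangle_{\mathcal{H}^2}$ against the appropriate product law on $\mathcal{P}((\mathcal{K})^I)^I$. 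Substituting the identity from the previous step, performing the change of variables pushing $\mathcal{L}(\hat X^1)$ forward to the law $\mathbb{P}_{\mu_{\hat X^1}}$ of $\mu_{\hat X}$ on $(\mathcal{H}_{\mathcal{S}})^I$, and recognising $\int k_{S_{\mathcal{H}_{\mathcal{S}}}}(\hat x,\cdot)\,\mathbb{P}_{\mu_{\hat X^1}}(d\hat x)=\mu^2_{\hat X^1}$, these three terms become exactly $\|\mu^2_{\hat X^1}\|^2_{\mathcal{H}^2_{\mathcal{S}}}$, $\langle\mu^2_{\hat X^1},\mu^2_{\hat Y^1}\rangle_{\mathcal{H}^2_{\mathcal{S}}}$ and $\|\mu^2_{\hat Y^1}\|^2_{\mathcal{H}^2_{\mathcal{S}}}$, whence $\mathcal{D}^2_{\mathcal{S}}(\mathbb{X},\mathbb{Y})^2=\|\mu^2_{\hat X^1}-\mu^2_{\hat Y^1}\|^2_{\mathcal{H}^2_{\mathcal{S}}}$.

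The main obstacle I anticipate is the second step: making the functoriality statement $\hat\iota\circ S_{\mathcal{H}_{\mathcal{S}}}=S_{\mathcal{H}^1}\circ(\iota\,\cdot\,)$ precise at the level of the Hilbert completions of the tensor algebras, so that $\hat\iota$ is genuinely norm-preserving on the subspaces where the signatures live rather than merely algebraically defined, given the delicate Hilbert-tensor-algebra structure recalled in Appendix \ref{subsect: hilbert space on tensor algebra}; once that is in place, the remaining measurability and Bochner-integrability points are routine consequences of the compactness of $\mathcal{K}$ and the continuity of $S$.
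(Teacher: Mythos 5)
Your proof is correct, but the mechanism you use for the central identification is genuinely different from the paper's. Both arguments share the same skeleton: polarize the two squared norms via the reproducing properties \eqref{eq: first order signature kernel} and \eqref{eq: second order signature kernel}, reduce the claim to the pathwise identity $\langle S_{\mathcal{H}_{\mathcal{S}}}(\mu_{\hat X^1}),S_{\mathcal{H}_{\mathcal{S}}}(\mu_{\hat Y^1})\rangle=\langle S^2(\hat X^1),S^2(\hat Y^1)\rangle_{\mathcal{H}^2}$, and observe that at the level of single time points the two families of inner products agree (your statement that $\iota$ is an isometry intertwining $\mu_{\hat X^1_t}$ with $\mathbb{E}_{\mathbb{P}}[S(X)\mid\mathcal{F}_t]$ is exactly the paper's equation \eqref{eq: two inner products are same}, proved there by the same double Bochner-integral computation). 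Where you diverge is in lifting this pointwise agreement to the signatures of the whole paths: the paper interpolates the discrete paths to piecewise linear continuous paths and invokes uniqueness for the Goursat PDE of \cite[Theorem 2.5]{cass2020computing} --- both inner products solve the same two-dimensional hyperbolic equation because the driving increments have equal inner products --- whereas you lift the base isometry $\iota$ to an isometric algebra homomorphism $\hat\iota$ of the Hilbert tensor algebras and use that the time-augmented signature, being a finite product of tensor exponentials of increments, is natural under such embeddings, so that $\hat\iota(S_{\mathcal{H}_{\mathcal{S}}}(\mu_{\hat X}))=S^2(\hat X^1)$ literally. The obstacle you flag is the only real one and it does go through: the tensor power of a Hilbert-space isometry is an isometry of the completed tensor powers, hence $\hat\iota$ is norm-preserving on all of $\widetilde{\mathbb{R}\oplus\mathcal{H}_{\mathcal{S}}}$, and the factorial decay of the exponentials keeps the signatures inside that space. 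Your route is more algebraic and self-contained --- it avoids the linear-interpolation step and the imported PDE well-posedness result, and it exhibits the two signatures as the \emph{same} element under a canonical identification rather than merely as objects with equal pairwise inner products --- while the paper's Goursat argument has the virtue of reusing exactly the machinery that Section \ref{sect: PDE trick} then turns into the numerical algorithm.
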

The proof of the above lemma relies heavily on the fact that the inner products appeared in \eqref{eq: first order signature kernel} and \eqref{eq: second order signature kernel} are solutions of the so--called Goursat PDEs, see Appendix \ref{appendix: Proof}.
\begin{remark}\label{remark: generalization of CKME to higher order}
	Although we write the $2$--nd order KME as $\mu^2_{\hat X^1}$, one should keep in mind that it actually depends on the law of $\hat X^1$. Then since it holds that $\hat X^2_t$ takes image in the same space which contains $\mathcal{L}(\hat X^1)$ for each $t \in I$ by the definition of prediction processes, the $2$--nd order CKME $\mu^2_{\hat X^2_t}$ is well--defined, just like how we define $\mu_{\hat X^1}$ in \eqref{eq: CKME}. Consequently we can define the $3$--rd order KME $\mu^3_{\hat X^2}$ which takes values in $\mathcal{H}_{\mathcal{S}}^3 = ``\mathcal{H}_{\mathcal{S}} \circ \mathcal{H}_{\mathcal{S}} \circ \mathcal{H}_{\mathcal{S}}"$. Continuing this procedure we can construct $r$--th order KME $\mu^{r}_{\hat X^{r-1}}$ for any $r \ge 1$ which takes values in $\mathcal{H}_{\mathcal{S}}^{r}$, and we can prove that $\mathcal{D}^r_{\mathcal{S}}(\mathbb{X},\mathbb{Y}) = \|\mu^r_{\hat X^{r-1}} - \mu^r_{\hat Y^{r-1}}\|_{\mathcal{H}^r_{\mathcal{S}}}$ by an induction argument together with the argument used in the proof of Lemma \ref{lemma: MMD by RKHS}, where $\mathcal{D}^r_{\mathcal{S}}$ denotes the rank $r$ Signature MMD which metrizes the rank $r$ adapted topology, see Appendix \ref{sect: higher rank adapted topologies}.
\end{remark}

\subsection{Empirical Estimation of Higher order KME}\label{sect: estimation of CKME}
In this section we propose a  feasible numerical implementation of rank $2$ Signature MMD $\mathcal{D}^2_{\mathcal{S}}(\cdot,\cdot)$ in terms of $2$--nd order KME introduced in the last subsection. Here we emphasize again that all filtered processes are equipped with \textit{natural filtrations}, i.e., we will focus on elements from the set $\Lambda_{\text{plain}}(\mathcal{K})$.  This setup allows us to estimate the higher order MMD $\mathcal{D}^2_{\mathcal{S}}(\mathbb{X},\mathbb{Y})$ out of sample paths generated from $\mathbb{X}$ and $\mathbb{Y}$. \\
Recall that by Lemma \ref{lemma: MMD by RKHS} we have $\mathcal{D}^2_{\mathcal{S}}(\mathbb{X},\mathbb{Y})^2 = \|\mu^2_{\hat X^1} - \mu^2_{\hat Y^1}\|_{\mathcal{H}^2_{\mathcal{S}}}^2$ and
$$
\mu^2_{\hat X^1} = \mathbb{E}_{\mathbb{P}}[k_{S_{\mathcal{H}_{\mathcal{S}}}}(\mu_{\hat X^1}, \cdot)], \quad \mu^2_{\hat Y^1} = \mathbb{E}_{\mathbb{Q}}[k_{S_{\mathcal{H}_{\mathcal{S}}}}(\mu_{\hat Y^1}, \cdot)]
$$
with $\mu_{\hat X^1} = (\mu_{\hat X^1_t})_{t \in I}$, $\mu_{\hat X^1_t} = \mathbb{E}_{\mathbb{P}}[k_S(X,\cdot)| \mathcal{F}_t]$ and similar formulations hold for $\mu_{\hat Y^1}$.
From the above equations, we can see that $\mu^2_{\hat X^1}$ is the expectation of the kernel $k_{S_{\mathcal{H}_{\mathcal{S}}}}$ with respect to $\mathcal{L}(\mu_{\hat X^1})$. As we required that $\mathcal{F}_t = \sigma(X_s, s \le t)$, for each $t \in I$ there exists a measurable function $F_{t, k_S} : \mathcal{K}^{I_t} \to \mathcal{H}_{\mathcal{S}}$ such that 
$$
\mathbb{E}_{\mathbb{P}}[k_S(X,\cdot)| \mathcal{F}_t] = F_{t,k_S} \circ X|_{I_t}
$$
where $I_t = \{s \in I: s \le t\}$ and $X|_{I_t} = (X_s)_{s \in I_t}$ is the restriction of $X$ onto $I_t$. Hence, in order to compute $\mu^2_{\hat X^1}$, the first and the most important task is to construct a consistent estimator $\widehat{F_{t,k_S}}: \mathcal{K}^{I_t} \to \mathcal{H}_{\mathcal{S}}$ for the measurable function $F_{t,k_S}$. \\
For the construction of such an estimator we will adopt the scheme proposed in \cite{park2020measure} and \cite{park2021LS}: Roughly speaking, since $\mathbb{E}_{\mathbb{P}}[k_S(X,\cdot)| \mathcal{F}_t] = F_{t,k_S} \circ X|_{I_t}$, $F_{t,k_S}$ is the minimizer of the loss function $\mathbb{E}[\|k_S(X,\cdot) - F(X|_{I_t})\|_{\mathcal{H}_{\mathcal{S}}}^2]$ over all measurable mapping $F \in L^2(\mathcal{K}^{I_t}, \mathcal{H}_{\mathcal{S}}, \mathbb{P}_{X|_{I_t}})$, which suggests us to solve this minimization problem within this $L^2$--space. For a further reduction of the complexity, we want to consider only candidates from a smaller subspace.\\
	Towards this aim, let $k_{S_t}(\cdot,\cdot)$ be the signature kernel on $\mathcal{K}^{I_t}$, which is defined by $k_{S_t}(v, w) = \langle S(v), S(w) \rangle_{\mathcal{H}^1}$ for paths $v,w \in \mathcal{K}^{I_t}$ and can be viewed as the restriction of signature kernel $k_S(\cdot,\cdot)$ onto the pathspace defined on subinterval $I_t$.
	Then we define $\mathcal{G}^t_{\mathcal{H}_S}$ to be the $\mathcal{H}_{\mathcal{S}}$--valued RKHS induced by the  $\mathcal{H}_S$--kernel $\Gamma_t = k_{S_t}(\cdot,\cdot)\text{Id}_{\mathcal{H}_{\mathcal{S}}}$. This means that $\mathcal{G}^t_{\mathcal{H}_S}$ is a Hilbert space whose elements are functions from $\mathcal{K}^{I_t}$ to $\mathcal{H}_S$ and its inner product satisfies that
	$$
	\langle f(z), h \rangle_{\mathcal{H}_S}  = \langle f, k_{S_t}(\cdot, z)h \rangle_{\mathcal{G}^t_{\mathcal{H}_S}},  \quad \langle h, k_{S_t}(z, z^\prime)h^\prime \rangle_{\mathcal{H}_S}  = \langle k_{S_t}(\cdot, z)h, k_{S_t}(\cdot, z^\prime)h^\prime \rangle_{\mathcal{G}^t_{\mathcal{H}_S}}
	$$
	for all $h, h^\prime \in \mathcal{H}_S$ and $z, z^\prime \in \mathcal{K}^{I_t}$. 
	Thanks to the nice properties of signature kernel $k_{S_t}$, one can show that the Hilbert space $\mathcal{G}^t_{\mathcal{H}_S}$ is dense in $L^2(\mathcal{K}^{I_t}, \mathcal{H}_{\mathcal{S}}, \mathbb{P}_{X|_{I_t}})$,
	see Appendix \ref{sect: kernel learning} for more detailed explanations on these arguments regarding vector--valued RKHS.\\
Now, due to the density of $\mathcal{G}^t_{\mathcal{H}_S}$, it is enough to consider the minimization problem $\mathbb{E}[\|k_S(X,\cdot) - F(X|_{I_t})\|_{\mathcal{H}_{\mathcal{S}}}^2]$ over all elements $F \in \mathcal{G}^t_{\mathcal{H}_S}$. As a consequence, it is plausible that given i.i.d. sample paths $(\bar x^i)_{i \le M} \sim \mathbb{P}_X$, the unique minimizer of the regularized empirical surrogate loss with regularization parameter $\lambda_M >0$ (Here, we need $\lambda_M$ decays to $0$ slower than $\mathcal{O}(M^{-1/2})$ to guarantee the convergence in Theorem \ref{thm: convergence of empirical estimates} below, see \cite[Theorem 4.4]{park2021LS}) on $\mathcal{G}^t_{\mathcal{H}_{\mathcal{S}}}$
$$
\hat{\mathcal{E}}_{X|_{I_t}, M, \lambda_M}(F) := \frac{1}{M}\sum_{i=1}^M \|k_{S}(\bar x^i, \cdot) - F(\bar x^i|_{I_t})\|_{\mathcal{H}_S}^2 + \lambda_M \|F\|^2_{\mathcal{G}^t_{\mathcal{H}_{\mathcal{S}}}}
$$
should be a good estimator for the desired function $F_{t,k_S}$. 
In view of \cite[Sect. 4]{park2020measure}, the minimizer is of the form that
\begin{equation}\label{eq: minimizer for emirical loss}
	\widehat{F}_{t,k_S}((\bar x^i)_{i \le M},\cdot)  = \textbf{k}^X_t(\cdot)(\textbf{K}^X_{t,t} + M\lambda_M \text{I}_M)^{-1}\textbf{k}^X_T,
\end{equation}	
where $\textbf{k}^X_t(\cdot) = (k_{S_t}(\bar x^i|_{I_t}, \cdot))_{i=1,\ldots,M}:\mathcal{K}^{I_t} \to \mathbb{R}^M$ is an $\mathbb{R}^M$--valued (continuous) function on $\mathcal{K}^{I_t}$, $\textbf{K}^X_{t,t}= ([\textbf{K}^X_{t,t}]_{i,j})_{i,j=1,\ldots,M}$ is an $M \times M$ matrix with entries $[\textbf{K}^X_{t,t}]_{i,j} = k_{S_t}(\bar x^i|_{I_t}, \bar x^j|_{I_t})$, $\textbf{k}^X_T = (k_S(\bar x^i,\cdot))_{i=1,\ldots,M}$ is a vector in $(\mathcal{H}_{\mathcal{S}})^M$ and $\text{I}_M$ is the $M \times M$--identity matrix. Note that $\textbf{k}^X_t(\cdot)$, $\textbf{K}^X_{t,t}$ and $\textbf{k}^X_T$ are all dependent on samples paths $(\bar x^i)_{i\le M}$ (but for simplicity of notations we omit them on the right hand side in \eqref{eq: minimizer for emirical loss}), the subscript $t$ for $\textbf{k}^X_t(\cdot)$ emphasizes that this function only depends on the sample paths on sub--interval $I_t$ while the subscript $T$ for $\textbf{k}_T^X$ means that this vector depends on the whole trajectories of sample paths on $I$. In particular, we use the notation $\widehat{F_{t,k_S}}((\bar x^i)_{i \le M},\cdot)$ to emphasize the dependence of this estimator on the sample paths  $(\bar x^i)_{i \le M} \sim \mathbb{P}_X$.\\
Given i.i.d. sample paths $(\bar x^i)_{i \le M} \sim \mathbb{P}_X$, we define the $\mathcal{H}_{\mathcal{S}}$--valued stochastic process $$
\hat \mu_{\hat X^1_t}((\bar x^i)_{i \le M},\cdot) := \widehat{F_{t,k_S}}((\bar x^i)_{i \le M},\cdot), 
$$
$t \in I$ (defined on the pathspace $\mathcal{K}^I$). The following theorem justifies that $\hat \mu_{\hat X^1_t}((\bar x^i)_{i \le M},\cdot)$ is indeed a consistent estimator of our CKME process $\mu_{\hat X^1_t} = \mathbb{E}_{\mathbb{P}}[k_S(X,\cdot)|\mathcal{F}_t]$, $t \in I$. 
\begin{theorem}\label{thm: convergence of empirical estimates}
	For any $\mathbb{X} \in \Lambda_{\text{plain}}(\mathcal{K})$, it holds that
	$$
	\lim_{M \to \infty} \|\mathbb{E}_{\mathbb{P}_X}[k_{S_{\mathcal{H}_{\mathcal{S}}}}(t \mapsto \hat \mu_{\hat X^1_t}((\bar x^i)_{i \le M},\cdot))] - \mu^2_{\hat X^1}\|_{\mathcal{H}^2_{\mathcal{S}}} = 0
	$$
	in probability for i.i.d. realizations of $(\bar x^i)_{i \le M} \sim \mathbb{P}_X$, provided $\lambda_M$ decays to $0$ at a slower rate than $\mathcal{O}(M^{-1/2})$.
\end{theorem}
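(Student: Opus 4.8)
The plan is to propagate the $L^2$-consistency of the inner conditional kernel mean embedding estimators $\widehat{F_{t,k_S}}$ through the second order signature feature map, using the reproducing identity \eqref{eq: second order signature kernel} and the boundedness of the true CKME path. For $x\in\mathcal{K}^I$ write $\mu_x:=\big(F_{t,k_S}(x|_{I_t})\big)_{t\in I}\in(\mathcal{H}_{\mathcal{S}})^I$ for the true CKME path (so that $\mu_{\hat X^1_t}=F_{t,k_S}\circ X|_{I_t}$) and $\hat\mu^M_x:=\big(\widehat{F_{t,k_S}}((\bar x^i)_{i\le M},x|_{I_t})\big)_{t\in I}$ for its empirical version. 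By the definition of $\mu^2_{\hat X^1}$ (and the fact that $\mathbb{P}_{\mu_{\hat X^1}}$ is the pushforward of $\mathbb{P}_X$ under $x\mapsto\mu_x$) one has $\mu^2_{\hat X^1}=\int_{\mathcal{K}^I}k_{S_{\mathcal{H}_{\mathcal{S}}}}(\mu_x,\cdot)\,\mathbb{P}_X(dx)$, while $\mathbb{E}_{\mathbb{P}_X}\big[k_{S_{\mathcal{H}_{\mathcal{S}}}}(t\mapsto\hat\mu_{\hat X^1_t}((\bar x^i)_{i\le M},\cdot))\big]=\int_{\mathcal{K}^I}k_{S_{\mathcal{H}_{\mathcal{S}}}}(\hat\mu^M_x,\cdot)\,\mathbb{P}_X(dx)$. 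Using the triangle inequality for Bochner integrals in $\mathcal{H}^2_{\mathcal{S}}$ and the isometry $\|k_{S_{\mathcal{H}_{\mathcal{S}}}}(\hat x,\cdot)-k_{S_{\mathcal{H}_{\mathcal{S}}}}(\hat y,\cdot)\|_{\mathcal{H}^2_{\mathcal{S}}}=\|S_{\mathcal{H}_{\mathcal{S}}}(\hat x)-S_{\mathcal{H}_{\mathcal{S}}}(\hat y)\|_{\widetilde{\mathbb{R}\oplus\mathcal{H}_{\mathcal{S}}}}$ that follows from \eqref{eq: second order signature kernel}, the claim reduces to showing $\int_{\mathcal{K}^I}\|S_{\mathcal{H}_{\mathcal{S}}}(\hat\mu^M_x)-S_{\mathcal{H}_{\mathcal{S}}}(\mu_x)\|_{\widetilde{\mathbb{R}\oplus\mathcal{H}_{\mathcal{S}}}}\,\mathbb{P}_X(dx)\to 0$ in probability.

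For the inner step I would invoke the consistency of regularized conditional mean embedding estimation. Since $\mathcal{F}_t=\sigma(X_s:s\le t)$ and $k_S$ is bounded and continuous on the compact set $\mathcal{K}^I$, the target $F_{t,k_S}$ lies in $L^2(\mathcal{K}^{I_t},\mathcal{H}_{\mathcal{S}},\mathbb{P}_{X|_{I_t}})$, and \cite[Theorem 4.4]{park2021LS} (together with the density of $\mathcal{G}^t_{\mathcal{H}_{\mathcal{S}}}$ in this $L^2$ space and the representer formula \eqref{eq: minimizer for emirical loss}) gives, for every $t\in I$ and any $\lambda_M$ decaying to $0$ more slowly than $\mathcal{O}(M^{-1/2})$, that $\varepsilon_M^t:=\|\widehat{F_{t,k_S}}((\bar x^i)_{i\le M},\cdot)-F_{t,k_S}\|_{L^2(\mathbb{P}_{X|_{I_t}},\mathcal{H}_{\mathcal{S}})}^2\to 0$ in probability. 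As $I$ is finite, $\varepsilon_M:=\sum_{t\in I}\varepsilon_M^t\to 0$ in probability, and $\int_{\mathcal{K}^I}\max_{t\in I}\|\hat\mu^M_x(t)-\mu_x(t)\|_{\mathcal{H}_{\mathcal{S}}}^2\,\mathbb{P}_X(dx)\le\varepsilon_M$.

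It remains to turn this $L^2$ closeness of the $\mathcal{H}_{\mathcal{S}}$-valued paths into $\widetilde{\mathbb{R}\oplus\mathcal{H}_{\mathcal{S}}}$ closeness of their signatures. Two ingredients feed in: (i) the true CKME path is uniformly bounded, $\|\mu_x(t)\|_{\mathcal{H}_{\mathcal{S}}}=\|\mathbb{E}_{\mathbb{P}}[k_S(X,\cdot)\mid\mathcal{F}_t]\|_{\mathcal{H}_{\mathcal{S}}}\le\sup_{x\in\mathcal{K}^I}k_S(x,x)^{1/2}=:C_0<\infty$; and (ii) the signature map $S_{\mathcal{H}_{\mathcal{S}}}$ on discrete $\mathcal{H}_{\mathcal{S}}$-valued paths, being a finite product of tensor exponentials of the increments, is locally Lipschitz: by the standard tensor-exponential estimates recalled in Appendix \ref{subsect: analytic signature} there is a nondecreasing $\kappa$ with $\|S_{\mathcal{H}_{\mathcal{S}}}(\hat x)-S_{\mathcal{H}_{\mathcal{S}}}(\hat y)\|_{\widetilde{\mathbb{R}\oplus\mathcal{H}_{\mathcal{S}}}}\le\kappa(R)\max_{t\in I}\|\hat x_t-\hat y_t\|_{\mathcal{H}_{\mathcal{S}}}$ whenever $\max_t\|\hat x_t\|_{\mathcal{H}_{\mathcal{S}}},\max_t\|\hat y_t\|_{\mathcal{H}_{\mathcal{S}}}\le R$. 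I would then split the outer integral over $A_\delta:=\{x:\max_t\|\hat\mu^M_x(t)-\mu_x(t)\|_{\mathcal{H}_{\mathcal{S}}}\le\delta\}$ and $A_\delta^c$. On $A_\delta$ both paths have nodes bounded by $C_0+\delta$, so by (ii) and Cauchy--Schwarz the contribution is at most $\kappa(C_0+\delta)\,\varepsilon_M^{1/2}$; on $A_\delta^c$, whose $\mathbb{P}_X$-mass is at most $\varepsilon_M/\delta^2$ by Markov, the a priori bound $\|\widehat{F_{t,k_S}}((\bar x^i)_{i\le M},\cdot)\|_{\mathcal{G}^t_{\mathcal{H}_{\mathcal{S}}}}\le C_0\lambda_M^{-1/2}$ (obtained by comparing the regularized empirical loss at its minimizer with its value at $0$) yields $\sup_x\|\hat\mu^M_x(t)\|_{\mathcal{H}_{\mathcal{S}}}\lesssim\lambda_M^{-1/2}$ and hence a bound of order $\varepsilon_M\,\delta^{-2}\,\kappa(c\lambda_M^{-1/2})$.

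The genuine obstacle is this last term: since $\kappa(R)$ grows like $e^{cR}$, one needs $\varepsilon_M\,e^{c\lambda_M^{-1/2}}\to 0$, which does not follow from $\varepsilon_M\to 0$ and $\lambda_M\gg M^{-1/2}$ alone. I would resolve it in one of two ways. Either exploit the quantitative $L^2$-rate in \cite[Theorem 4.4]{park2021LS} and choose the admissible decay of $\lambda_M$ slow enough --- e.g.\ of logarithmic order, still $\gg M^{-1/2}$ --- so that $e^{c\lambda_M^{-1/2}}$ grows more slowly than any polynomial and is dominated by the polynomial decay of $\varepsilon_M$; or, preferably, sharpen ingredient (ii) by showing that $\sup_x\|\widehat{F_{t,k_S}}((\bar x^i)_{i\le M},x|_{I_t})\|_{\mathcal{H}_{\mathcal{S}}}$ is bounded in probability uniformly in $M$ (using that the regression target $k_S(X,\cdot)$ is conditionally $\|\cdot\|_{\mathcal{H}_{\mathcal{S}}}$-bounded by $C_0$), which removes the blow-up and then lets one conclude by dominated convergence --- applied along an a.s.\ convergent subsequence extracted from the convergence in probability, using only the continuity of $S_{\mathcal{H}_{\mathcal{S}}}$. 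In either route one lets $M\to\infty$ with $\delta$ fixed and then, if needed, $\delta\downarrow 0$.
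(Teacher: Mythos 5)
Your proposal follows essentially the same route the paper takes: the paper does not write out a proof but defers entirely to \cite[Theorem 7]{Salvi2021HighRank}, naming as its two ingredients the universality of the signature kernel $k_S$ (which enters exactly where you use it, namely to get density of $\mathcal{G}^t_{\mathcal{H}_{\mathcal{S}}}$ in $L^2$ and hence the $L^2$-consistency of $\widehat{F_{t,k_S}}$ via \cite[Theorem 4.4]{park2021LS}) and the analytic, locally Lipschitz property of the signature map from Appendix~\ref{subsect: analytic signature} (which is your ingredient (ii)). Your reduction via the isometry induced by \eqref{eq: second order signature kernel} to an $L^1(\mathbb{P}_X)$ bound on $\|S_{\mathcal{H}_{\mathcal{S}}}(\hat\mu^M_x)-S_{\mathcal{H}_{\mathcal{S}}}(\mu_x)\|$, and the subsequent good-set/bad-set splitting, is a faithful reconstruction of how these ingredients must be combined. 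The one point where you go beyond the paper is in flagging the tension between the exponential growth of the local Lipschitz constant $\kappa(R)$ and the a priori bound $\sup_x\|\hat\mu^M_x(t)\|_{\mathcal{H}_{\mathcal{S}}}\lesssim\lambda_M^{-1/2}$ on the bad set; this is a genuine subtlety that the paper's one-line citation leaves entirely implicit, and your second proposed resolution (a uniform-in-$M$ stochastic bound on the sup-norm of the estimator, exploiting that the regression target $k_S(X,\cdot)$ has $\mathcal{H}_{\mathcal{S}}$-norm bounded by $C_0$, followed by continuity of $S_{\mathcal{H}_{\mathcal{S}}}$ along an a.s.\ convergent subsequence) is the cleaner of the two and the one most consistent with the qualitative, non-quantitative form of the statement. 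In short: same architecture as the cited proof, with an honest and correctly diagnosed account of the step that requires the most care.
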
	
\begin{proof}
	The proof of the above theorem is based on the universality of signature kernel $k_S$ (see Appendix \ref{sect: kernel learning}) and the analytical property of signature mapping (see sect. \ref{subsect: analytic signature}). See the proof of \cite[Theorem 7]{Salvi2021HighRank} for all details.
\end{proof}	
Thanks to the above theorem, we can approximate $\mathcal{D}_{\mathcal{S}}^2(\mathbb{X}, \mathbb{Y}) = \|\mu^2_{\hat X^1} - \mu^2_{\hat Y^1}\|_{\mathcal{H}^2_{\mathcal{S}}}$ arbitrarily well by computing their empirical counterparts $\|\hat \mu^2_{\hat X^1} - \hat \mu^2_{\hat Y^1}\|_{\mathcal{H}^2_{\mathcal{S}}}$ with $$\hat \mu^2_{\hat X^1} := \mathbb{E}_{\mathbb{P}_X}[k_{S_{\mathcal{H}_{\mathcal{S}}}}(t \mapsto \hat \mu_{\hat X^1_t}((\bar x^i)_{i \le M},\cdot))] $$
and $\hat \mu^2_{\hat Y^1} := \mathbb{E}_{\mathbb{Q}_Y}[k_{S_{\mathcal{H}_{\mathcal{S}}}}(t \mapsto \hat \mu_{\hat Y^1_t}((\bar y^j)_{j \le N},\cdot))] $ by using  $(\bar x^i)_{i \le M} \sim \mathbb{P}_X$ and $(\bar y^j)_{j \le N} \sim \mathbb{Q}_Y$ for large enough $M$ and $N$. Now we fix samples $(\bar x^i)_{i \le M} \sim \mathbb{P}_X$ and $(\bar y^j)_{j \le N} \sim \mathbb{Q}_Y$ and, for simplicity, write $t \mapsto \hat \mu_{\hat X^1_t}((\bar x^i)_{i \le M},\cdot) = \hat \mu_{\hat X^1}$ and $t \mapsto \hat \mu_{\hat Y^1}((\bar y^j)_{j \le N},\cdot) = \hat \mu_{\hat Y^1}$. Using the polarization expansion as  in the proof of Lemma \ref{lemma: MMD by RKHS} to $\|\hat \mu^2_{\hat X^1} - \hat \mu^2_{\hat Y^1}\|_{\mathcal{H}^2_{\mathcal{S}}}^2$, and then approximating all terms appeared in the resulting expansion by using empirical measures $\frac{1}{m}\sum_{i=1}^m \delta_{x^i}$ for $\mathbb{P}_X$ with $(x^i)_{i \le m} \sim \mathbb{P}_X$ and $\frac{1}{n}\sum_{j=1}^n \delta_{y^j}$ for $\mathbb{Q}_Y$ with $(y^j)_{j \le n} \sim \mathbb{Q}_Y$, we finally derive an unbiased empirical estimator for $\|\hat \mu^2_{\hat X^1} - \hat \mu^2_{\hat Y^1}\|_{\mathcal{H}^2_{\mathcal{S}}}^2$:
\begin{align}\label{eq: estimator for 2nd MMD}
	\widehat{\mathcal{D}}^2_{\mathcal{S}}(\mathbb{X},\mathbb{Y})^2 &= \frac{1}{m(m-1)} \sum_{\substack{i,j=1 \\ i\neq j}}^m k_{S_{\mathcal{H}_{\mathcal{S}}}}(\widetilde{x}^i,\widetilde{x}^j) - \frac{2}{mn} \sum_{i,j=1}^{m,n} k_{S_{\mathcal{H}_{\mathcal{S}}}}(\widetilde{x}^i,\widetilde{y}^j) \nonumber \\
	&\quad + \frac{1}{n(n-1)} \sum_{\substack{i,j=1 \\ i\neq j}}^n k_{S_{\mathcal{H}_{\mathcal{S}}}}(\widetilde{y}^i,\widetilde{y}^j)
\end{align}
where $\tilde x^i_t := \hat \mu_{\hat X^1_t}(x^i|_{I_t}) = \widehat{F_{t,k_S}}((\bar x^k)_{k \le M},x^i|_{I_t})$, $t \in I$ and 
$\tilde y^j_t := \hat \mu_{\hat Y^1_t}(y^j|_{I_t}) = \widehat{F_{t,k_S}}((\bar y^k)_{k \le N},y^j|_{I_t})$, $t \in I$. Since $\frac{1}{m}\sum_{i=1}^m \delta_{x^i} \to \mathbb{P}_X$, $\frac{1}{n}\sum_{j=1}^n \delta_{y^j} \to \mathbb{Q}_Y$ weakly as $m,n \to \infty$ and the empirical estimators $\widehat{F_{t,k_S}}$ are continuous, we indeed obtain that
$$
\widehat{\mathcal{D}}^2_{\mathcal{S}}(\mathbb{X},\mathbb{Y})^2 \to \|\hat \mu^2_{\hat X^1} - \hat \mu^2_{\hat Y^1}\|_{\mathcal{H}^2_{\mathcal{S}}}^2 \approx \mathcal{D}_{\mathcal{S}}^2(\mathbb{X}, \mathbb{Y})
$$
as $m,n \to \infty$. This observation together with Theorem \ref{thm: convergence of empirical estimates} immediately implies the desired convergence behaviour of the empirical $2$--nd MMD $\widehat{\mathcal{D}}^2_{\mathcal{S}}(\mathbb{X},\mathbb{Y})$ to the true metric $\mathcal{D}_{\mathcal{S}}^2(\mathbb{X}, \mathbb{Y})$:
\begin{theorem}\label{thm: convergence of empirical 2nd MMD}
	For any $\mathbb{X}, \mathbb{Y} \in \Lambda_{\text{plain}}(\mathcal{K})$, $\widehat{\mathcal{D}}^2_{\mathcal{S}}(\mathbb{X},\mathbb{Y})$ is a consistent estimator for the $2$--nd MMD, i.e.,
	$$
	\widehat{\mathcal{D}}^2_{\mathcal{S}}(\mathbb{X},\mathbb{Y}) \to \mathcal{D}_{\mathcal{S}}^2(\mathbb{X}, \mathbb{Y}) 
	$$
	in probability as $m, n \to \infty$ and $M,N \to \infty$, provided the parameters $\lambda_M$, $\lambda_N$ appeared in their formulations decay to $0$ slower than $\mathcal{O}(M^{-1/2})$ and $\mathcal{O}(N^{-1/2})$ respectively.
\end{theorem}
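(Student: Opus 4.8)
The plan is to peel off the two nested layers of sampling and then recombine them with a triangle inequality. Throughout we work in $\Lambda_{\text{plain}}(\mathcal{K})$, so that $\mathcal{F}_t$ is generated by $X|_{I_t}$ and the conditional embedding is a genuine deterministic function of $X|_{I_t}$. First I would dispose of the outer limit $M,N\to\infty$. Writing $\hat\mu^2_{\hat X^1},\hat\mu^2_{\hat Y^1}$ for the estimated second order kernel mean embeddings as in the paragraph preceding \eqref{eq: estimator for 2nd MMD} (these depend on the $M$, resp.\ $N$, outer samples), the reverse triangle inequality in $\mathcal{H}^2_{\mathcal{S}}$ gives
$$
\bigl|\, \|\hat\mu^2_{\hat X^1}-\hat\mu^2_{\hat Y^1}\|_{\mathcal{H}^2_{\mathcal{S}}} - \|\mu^2_{\hat X^1}-\mu^2_{\hat Y^1}\|_{\mathcal{H}^2_{\mathcal{S}}} \,\bigr|
\le \|\hat\mu^2_{\hat X^1}-\mu^2_{\hat X^1}\|_{\mathcal{H}^2_{\mathcal{S}}} + \|\hat\mu^2_{\hat Y^1}-\mu^2_{\hat Y^1}\|_{\mathcal{H}^2_{\mathcal{S}}},
$$
and Theorem \ref{thm: convergence of empirical estimates}, applied once to $\mathbb{X}$ and once to $\mathbb{Y}$ with $\lambda_M,\lambda_N$ of the prescribed order, sends the right-hand side to $0$ in probability as $M,N\to\infty$. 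Since $\|\mu^2_{\hat X^1}-\mu^2_{\hat Y^1}\|_{\mathcal{H}^2_{\mathcal{S}}}=\mathcal{D}^2_{\mathcal{S}}(\mathbb{X},\mathbb{Y})$ by Lemma \ref{lemma: MMD by RKHS}, this already yields $\|\hat\mu^2_{\hat X^1}-\hat\mu^2_{\hat Y^1}\|_{\mathcal{H}^2_{\mathcal{S}}}\to\mathcal{D}^2_{\mathcal{S}}(\mathbb{X},\mathbb{Y})$ in probability.

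Next I would fix the outer samples and run the inner limit $m,n\to\infty$ conditionally. Drawn independently of the outer samples, the inner samples $(x^i)_{i\le m}\sim\mathbb{P}_X$ produce, after being pushed through the fixed continuous maps $\widehat{F_{t,k_S}}((\bar x^k)_{k\le M},\cdot)$, i.i.d.\ $\mathcal{H}_{\mathcal{S}}$-valued paths $\widetilde{x}^i=(\widetilde{x}^i_t)_{t\in I}$, and likewise for the $\widetilde{y}^j$. By construction $\hat\mu^2_{\hat X^1}$ is exactly the $k_{S_{\mathcal{H}_{\mathcal{S}}}}$-kernel mean embedding of the common law of the $\widetilde{x}^i$, so — by the polarization expansion used in the proof of Lemma \ref{lemma: MMD by RKHS} — the right-hand side of \eqref{eq: estimator for 2nd MMD} is precisely the classical unbiased $U$-statistic estimator of $\|\hat\mu^2_{\hat X^1}-\hat\mu^2_{\hat Y^1}\|^2_{\mathcal{H}^2_{\mathcal{S}}}$ built from $(\widetilde{x}^i)_{i\le m}$ and $(\widetilde{y}^j)_{j\le n}$. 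Since $\mathcal{K}^{I_t}$ is compact and each $\widehat{F_{t,k_S}}((\bar x^k)_{k\le M},\cdot)$ is continuous, its image is relatively compact, and continuity of $k_{S_{\mathcal{H}_{\mathcal{S}}}}$ then bounds the kernel on the (random but now frozen) domain where the $\widetilde{x}^i,\widetilde{y}^j$ live; a second-moment estimate (the law of large numbers for $U$-statistics) therefore gives $\widehat{\mathcal{D}}^2_{\mathcal{S}}(\mathbb{X},\mathbb{Y})^2\to\|\hat\mu^2_{\hat X^1}-\hat\mu^2_{\hat Y^1}\|^2_{\mathcal{H}^2_{\mathcal{S}}}$ in probability as $m,n\to\infty$, conditionally on the outer samples. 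Taking square roots through the continuous map $z\mapsto\sqrt{z^+}$ — which coincides with $\sqrt{z}$ at the nonnegative limit and absorbs the harmless fact that a $U$-statistic for a squared quantity may be negative at finite $m,n$ — yields $\widehat{\mathcal{D}}^2_{\mathcal{S}}(\mathbb{X},\mathbb{Y})\to\|\hat\mu^2_{\hat X^1}-\hat\mu^2_{\hat Y^1}\|_{\mathcal{H}^2_{\mathcal{S}}}$ in probability.

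Finally I would glue the two steps with a standard $\varepsilon/2$ argument for iterated limits in probability: given $\varepsilon,\delta>0$, choose $M,N$ so large that $|\,\|\hat\mu^2_{\hat X^1}-\hat\mu^2_{\hat Y^1}\|_{\mathcal{H}^2_{\mathcal{S}}}-\mathcal{D}^2_{\mathcal{S}}(\mathbb{X},\mathbb{Y})|\le\varepsilon/2$ with probability $\ge 1-\delta/2$, then, conditioning on the outer samples, choose $m,n$ so large that $\widehat{\mathcal{D}}^2_{\mathcal{S}}(\mathbb{X},\mathbb{Y})$ is within $\varepsilon/2$ of $\|\hat\mu^2_{\hat X^1}-\hat\mu^2_{\hat Y^1}\|_{\mathcal{H}^2_{\mathcal{S}}}$ with conditional probability $\ge 1-\delta/2$, and integrate over the outer samples. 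The main obstacle is not any single estimate but keeping the two sources of randomness correctly ordered: the inner $U$-statistic sits on top of the outer regularized-regression estimator, so the $m,n\to\infty$ limit must be taken conditionally on $(\bar x^i)_{i\le M},(\bar y^j)_{j\le N}$ and only then the $M,N\to\infty$ limit. Upgrading this to a genuine joint limit would additionally require a bound on $k_{S_{\mathcal{H}_{\mathcal{S}}}}$ over the image of $\widehat{F_{t,k_S}}$ that is uniform in $M$, which is delicate because that image can grow like $\lambda_M^{-1}$; I would therefore state and prove the result in the iterated form that matches the construction leading to \eqref{eq: estimator for 2nd MMD}.
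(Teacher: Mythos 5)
Your proof is correct and follows essentially the same route as the paper, which establishes this theorem by combining Theorem \ref{thm: convergence of empirical estimates} (for the $M,N$ limit) with the weak convergence of the empirical measures $\frac{1}{m}\sum_i\delta_{x^i}$, $\frac{1}{n}\sum_j\delta_{y^j}$ and the continuity of $\widehat{F_{t,k_S}}$ (for the $m,n$ limit), exactly your two-layer decomposition. Your version is in fact more careful than the paper's informal argument — the explicit reverse triangle inequality, the conditional $U$-statistic law of large numbers, the $\varepsilon/2$ gluing of the iterated limits, and the remark that a uniform-in-$M$ kernel bound would be needed for a genuine joint limit are all worthwhile precisions that the paper leaves implicit.
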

	
\subsection{A signature kernel PDE Trick}\label{sect: PDE trick}
Thanks to Theorem \ref{thm: convergence of empirical 2nd MMD}, we can approximate $\mathcal{D}_{\mathcal{S}}^2(\mathbb{X}, \mathbb{Y})$ by computing its empirical counterpart $\widehat{\mathcal{D}}^2_{\mathcal{S}}(\mathbb{X},\mathbb{Y})$. Then from its expression \eqref{eq: estimator for 2nd MMD}, computing this estimator boils down to evaluating the signature kernels $k_{S_{\mathcal{H}_{\mathcal{S}}}}(\widetilde{x},\widetilde{y})$ for $\tilde x = \tilde x^i$ and $\tilde y = \tilde y^j$ for some $i,j$. By \cite[Theorem 2.5]{cass2020computing}  the signature kernel $k_{S_{\mathcal{H}_{\mathcal{S}}}}(\widetilde{x},\widetilde{y})$ is the unique solution $u(T,T)$ of the following hyperbolic PDE, which is also called the Goursat problem:
\begin{equation*}
	\frac{\partial^2 u_{\widetilde{x},\widetilde{y}}}{\partial s \partial t}  = \big\langle \frac{\partial}{\partial t}\widetilde{x}_t, \frac{\partial}{\partial s}\widetilde{y}_{s}\big\rangle_{\mathcal{H}_{\mathcal{S}}} u_{\widetilde{x},\widetilde{y}}
\end{equation*}
where we consider $\tilde x$ and $\tilde y$ as continuous paths in $\mathcal{H}_{\mathcal{S}}$ via linear interpolation (see Remark \ref{remark: continuous time signature}). Since the two derivatives in the above equation can be approximated by finite difference with time increment $\delta$, one can approximate the solution to the above Goursat problem by solving its discretized version:	
\begin{equation}\label{eq: goursat pde for higher order kernel}
	\frac{\partial^2 u_{\widetilde{x},\widetilde{y}}}{\partial s \partial t}  =\frac{1}{\delta^2} \left(\big\langle \widetilde{x}_{t-\delta},\widetilde{y}_{s-\delta}\big\rangle_{\mathcal{H}_{\mathcal{S}}} - \big\langle \widetilde{x}_{t-\delta},\widetilde{y}_{s}\big\rangle_{\mathcal{H}_{\mathcal{S}}} - \big\langle \widetilde{x}_{t},\widetilde{y}_{s-\delta}\big\rangle_{\mathcal{H}_{\mathcal{S}}} + \big\langle \widetilde{x}_{t},\widetilde{y}_{s}\big\rangle_{\mathcal{H}_{\mathcal{S}}} \right) u_{\widetilde{x},\widetilde{y}}
\end{equation}	
Now invoking that $\tilde x_t := \hat \mu_{\hat X^1_t}(x|_{I_t}) = \widehat{F_{t,k_S}}((\bar x^k)_{k \le M},x|_{I_t})$, $t \in I$ and 
$\tilde y_s := \hat \mu_{\hat Y^1_s}(y|_{I_s}) = \widehat{F_{s,k_S}}((\bar y^k)_{k \le N},y|_{I_s})$, $s \in I$, by the expression \eqref{eq: minimizer for emirical loss} for $\widehat{F_{t,k_S}}$, we can readily check that $\forall s,t \in I$
\begin{equation}\label{eq: empirical es for inner product}
	\big\langle \widetilde{x}_{t},\widetilde{y}_{s}\big\rangle_{\mathcal{H}_{\mathcal{S}}}   = \textbf{k}^x_s  (\textbf{K}^{X}_{t,t} + M \lambda_M \text{I}_M)^{-1}\textbf{K}^{X,Y}_{T,T}(\textbf{K}^{Y}_{s,s} + N \lambda_N \text{I}_N)^{-1}\textbf{k}^y_s,
\end{equation}
where $\textbf{k}^x_t \in \mathbb{R}^M$ and $\textbf{k}_t^s \in \mathbb{R}^N$ are the vectors
\begin{equation}\label{eq: empirical vector}
	[\textbf{k}^x_t]_i = k_{S_t}(\bar x^i|_{I_t}, x|_{I_t}), \quad [\textbf{k}^y_s]_j = k_{S_s}(\bar y^j|_{I_s}, y|_{I_s}), \quad i \le M, j\le N
\end{equation}
and $\textbf{K}^X_{s,s} \in \mathbb{R}^{M \times M}$, $\textbf{K}^{X,Y}_{T,T} \in \mathbb{R}^{M \times N}$ and $\textbf{K}^Y_{t,t}\in \mathbb{R}^{N \times N}$ are matrices
\begin{align}\label{eq: empirical matrix}
	&[\textbf{K}^X_{t,t}]_{i,j} = k_{S}(\bar x^i|_{I_t}, \bar x^j|_{I_t}), \quad  [\textbf{K}^{X,Y}_{T,T}]_{i,j} = k_S(\bar x^i,\bar y^j),  \nonumber\\
	&[\textbf{K}^Y_{s,s}]_{i,j} = k_S(\bar y^i|_{I_s}, \bar y^j|_{I_s}).
\end{align}
Now, by using the signature kernel PDE trick \cite[Theorem 2.5]{cass2020computing} again to these kernels which are related to the normal signature map $S$ on $\mathcal{K}^I$, namely $k_S(\bar x^i,\bar y^j)$ is the unique solution $u(T,T)$ to the Goursat problem
$$
\frac{\partial^2 u_{\bar x^i,\bar y^j}}{\partial s \partial t}  = \big\langle \frac{\partial}{\partial t}\bar x^i_t, \frac{\partial}{\partial s}\bar y^j_{s}\big\rangle_{\mathcal{H}_{\mathcal{S}}} u_{\bar x,\bar y}
$$
(and similar expressions hold for $k_{S}(\bar x^i|_{I_t}, \bar x^j|_{I_t})$ and $k_S(\bar y^i|_{I_s}, \bar y^j|_{I_s})$), we can compute the above vectors and matrices, and consequently obtain a numerical solution to $\langle \tilde x^i_t, \tilde{y}^j_s \rangle_{\mathcal{H}_{\mathcal{S}}}$ in \eqref{eq: empirical es for inner product} for all $i,j$. Then, inserting them back to the coefficients of PDE \eqref{eq: goursat pde for higher order kernel} we can compute the higher order kernel $k_{S_{\mathcal{H}_{\mathcal{S}}}}(\tilde x^i,\tilde y^j)$ and finally obtain the desired numerical result for the empirical estimator $\widehat{\mathcal{D}}^2_{\mathcal{S}}(\mathbb{X},\mathbb{Y})$ via the formula \eqref{eq: estimator for 2nd MMD} by using sample paths $\tilde x^i$ and $\tilde y^j$, $i \le m$ and $j\le n$.

\subsection{Overview of the algorithm}\label{sect: summary}
Now we summarize our algorithm for computing the estimator $\widehat{\mathcal{D}}^2_{\mathcal{S}}(\mathbb{X}, \mathbb{Y})$ (where $\mathbb{X}, \mathbb{Y} \in \Lambda_{\text{plain}}(\mathcal{K})$) into the following diagram. To ease our notation, let us assume that (in the context of Theorem \ref{thm: convergence of empirical 2nd MMD}) $M = m$, $N=n$, $\bar x^i = x^i$, $i = 1, \ldots, m$ and $\bar y^j = y^j$, $j = 1, \ldots, n$ are samples from $\mathbb{P}_X$ and $\mathbb{Q}_Y$, respectively. Also recall that $\mathbb{X}$ and $\mathbb{Y}$ are defined on the discrete time interval $I = \{0=t_0 < t_1 < \ldots < t_N= T\}$ for some integer $N \ge 1$.

\begin{algorithm}
	\caption{$\mathsf{PDESolve}$ \hfill $\mathcal{O}(N^2)$}
	\label{alg:pdesolve}
	\begin{algorithmic}[1]
		\State {\bfseries Input:} matrix $M\in\mathbb{R}^{N\times N}$ where $[M]_{p,q} = \langle x^i_{s_{p+1}} - x^i_{s_p}, y^j_{t_{q+1}} - y^j_{t_q} \rangle_{\mathbb{R}^d}$, $\mathsf{full}\in\{\mathsf{True},\mathsf{False}\}$
		\State {\bfseries Output:} full solution $u\in\mathbb{R}^{N\times N}$ with $u[p,q]=k_{S}(x|_{I_{t_p}},y|_{I_{t_q}})$ or $u[-1,-1]=k_{\mathcal{S}}(x,y)$
		\vspace{5pt}
		\State{$u[1,:]\leftarrow 1$}
		\State{$u[:,1]\leftarrow 1$}
		\For{$p$ from $1$ to $N-1$}
		\For{$q$ from $1$ to $N-1$}
		\State $u[p+1,q+1]\leftarrow f (u[p,q+1], u[p+1,q], u[p,q], M[p,q] )$
		\EndFor
		\EndFor
		\State {\bfseries if} $\mathsf{full}$ {\bfseries then return} $u$ {\bfseries else}  {\bfseries return} $u[-1,-1]$
	\end{algorithmic}
\end{algorithm}

\begin{algorithm}
	\caption{$\mathsf{FirstOrderGram}$ \hfill $\mathcal{O}(dm^2N^2)$}
	\label{alg:firstordergram}
	\begin{algorithmic}[1]
		\State {\bfseries Input:} sample paths $\{x^i\}_{i=1}^{m}\sim \mathbb{P}_X$ and $\{y^j\}_{j=1}^{n}\sim \mathbb{Q}_Y$, $\mathsf{full}\in\{\text{True},\text{False}\}$
		\State {\bfseries Output:} $G\in\mathbb{R}^{m\times n\times P\times Q}$ where $G[i,j,p,q]=k_{S}(x^i|_{I_{s_p}},y^j|_{I_{t_q}})$ or $G[:,:,-1,-1]$
		\vspace{5pt}
		\State $M[i,j,p,q] \leftarrow \langle x^i_{s_p}, y^j_{t_q}\rangle\quad \forall i\in\{1,\ldots,m\},~ j\in\{1,\ldots,n\}, p\in\{1,\ldots,P\},~ q\in\{1,\ldots,Q\}$ 
		\State $M\leftarrow M[:,:,1{:},1{:}] + M[:,:,{:}\!-\!1,{:}\!-\!1] - M[:,:,1{:},{:}\!-\!1] - M[:,:,{:}\!-\!1,1{:}]$ 
		\State $G[i,j]\leftarrow \mathsf{PDESolve}(M[i,j]),\quad \forall i\in\{1,\ldots,m\},~ j\in\{1,\ldots,n\}$ 
		\State {\bfseries if} $\mathsf{full}$ {\bfseries then return} $G$ {\bfseries else}  {\bfseries return} $G[:,:,-1,-1]$
	\end{algorithmic}
\end{algorithm}

\begin{algorithm}[H]
	\caption{$\mathsf{InnerProdPredCondKME}$ \hfill $\mathcal{O}(N^2m^3)$}
	\label{alg:innerprodpredcondKME}
	\begin{algorithmic}[1]
		\State {\bfseries Input:} three Gram matrices $G_{XX}, G_{XY}, G_{YY}$ and hyperparameter $\lambda$, where $G_{XY}[i,j,p,q]=k_{S}(x^i|_{I_{s_p}},y^j|_{I_{t_q}})$ and similar expressions for $G_{XX}$ and $G_{YY}$.
		\State {\bfseries Output:} returns an empirical estimator of $M\in\mathbb{R}^{m\times n\times N\times N}$ where $M[i,j,p,q]=\langle\widetilde{x}^i_{s_p},\widetilde{y}^j_{t_q}\rangle$
		\State $W_X[:,:,p]\leftarrow (G_{XX}[:,:,p,p]+m\lambda I)^{-1}, \ \ \forall p\in\{1,\ldots,P\}$ 
		\State $W_Y[:,:,q]\leftarrow (G_{YY}[:,:,q,q]+n\lambda I)^{-1}, \ \ \forall q\in\{1,\ldots,Q\}$ 
		\For{$p$ from $1$ to $P$}
		\For{$q$ from $1$ to $Q$}
		\State $M[:,:,p,q]\leftarrow G_{XX}[:,:,p,p]^T W_X[:,:,p] G_{XY}[:,:,-1,-1] W_Y[:,:,q] G_{YY}[:,:,q,q]$
		\EndFor
		\EndFor
	\end{algorithmic}
\end{algorithm}

\begin{algorithm}
	\caption{$\mathsf{SecondOrderGram}$ \hfill $\mathcal{O}(N^2m^3)$}
	\label{alg:secondordergram}
	\begin{algorithmic}[1]
		\State {\bfseries Input:} $G_{XX}, G_{XY}, G_{YY}$  and hyperparameter $\lambda$.
		\State {\bfseries Output:} an empirical estimator of $G^2_{X,Y}\in\mathbb{R}^{m\times n}$, where $G^2_{X,Y}[i,j]=k_{S_{\mathcal{H}_{\mathcal{S}}}}(\widetilde{x}^i,\widetilde{y}^j)$
		\vspace{5pt}
		\State $M \leftarrow\mathsf{InnerProdPredCondKME}(G_{XX},G_{XY},G_{YY},\lambda)$
		\State $M\leftarrow M[:,:,1{:},1{:}] + M[:,:,{:}\!-\!1,{:}\!-\!1] - M[:,:,1{:},{:}\!-\!1] - M[:,:,{:}\!-\!1,1{:}]$
		\State $G^2_{XY}[i,j]\leftarrow \mathsf{PDESolve}(M[i,j]),\quad \forall i\in\{1,\ldots,m\},~\forall j\in\{1,\ldots,n\}$ 
		\State {\bfseries return} $G^2_{XY}$
	\end{algorithmic}
\end{algorithm}

\begin{algorithm}
\caption{$\mathsf{SecondOrderMMD}$  \hfill $\mathcal{O}(dm^2N^2+N^2m^3)$}
\label{alg:secondordermmd}
\begin{algorithmic}[1]
	\State {\bfseries Input:} sample paths $\{x^i\}_{i=1}^{m}\sim \mathbb{P}_X$ and $\{y^j\}_{j=1}^{n}\sim \mathbb{Q}_Y$, hyperparameter $\lambda$.
	\State {\bfseries Output:} an empirical estimator of the $2^{\text{nd}}$ order MMD $\widehat{\mathcal{D}}^2_{\mathcal{S}}(\mathbb{X}, \mathbb{Y})$.
	\vspace{5pt}
	\State $G^1_{XX}\leftarrow \mathsf{FirstOrderGram}(\{x^i\}_{i=1}^{m},\{x^i\}_{i=1}^{m},\mathsf{full}=\mathrm{True})$
	\State $G^1_{XY}\leftarrow \mathsf{FirstOrderGram}(\{x^i\}_{i=1}^{m},\{y^j\}_{j=1}^{n},\mathsf{full}=\mathrm{True})$
	\State $G^1_{YY}\leftarrow \mathsf{FirstOrderGram}(\{y^j\}_{j=1}^{n},\{y^j\}_{j=1}^{n},\mathsf{full}=\mathrm{True})$
	\vspace{5pt}
	\State $G^2_{XX}\leftarrow \mathsf{SecondOrderGram}(G^1_{XX},G^1_{XX},G^1_{XX},\lambda)$
	\State $G^2_{XY}\leftarrow \mathsf{SecondOrderGram}(G^1_{XX},G^1_{XY},G^1_{YY},\lambda)$
	\State $G^2_{YY}\leftarrow \mathsf{SecondOrderGram}(G^1_{YY},G^1_{YY},G^1_{YY},\lambda)$
	\vspace{5pt}
	\State {\bfseries return} $\mathrm{avg}(G^2_{XX}) -2*\mathrm{avg}(G^2_{XY}) + \mathrm{avg}(G^2_{YY})$
\end{algorithmic}
\end{algorithm}

Finally, in view of Theorem \ref{thm: main theorem 3}, we obtain an consistent estimator $\hat K^2_{\mathcal{S}}: = \exp(-\sigma^2\widehat{\mathcal{D}}^2_{\mathcal{S}})$ for the ($2$--nd order)  Distribution Regression kernel $K^2_{\mathcal{S}}$. As a consequence, given a dataset $\mathcal{D} = \{(\mathbb{Y}^i, v_i)\}_{i=1}^M$ of input--output pairs, where $\mathbb{Y}^i \in \Lambda_{\text{plain}}(\mathcal{K})$ are financial models and $v_i$ is the value of $v(\mathbb{Y}^i) = \sup_{\tau \in \mathcal{T}_{\mathbb{Y}^i}}\mathbb{E}_{\mathbb{Q}^i}[\gamma(Y^i_\tau)]$  (in the context of mathematical finance, it is the fair price of American option on the market modelled by $\mathbb{Y}^i$), we can perform the Kernel Ridge Regression via the $2$--nd order kernel $\hat K^2_{\mathcal{S}}$. More precisely, the approximating value function $\hat v(\cdot)$ on $\Lambda_{\text{plain}}(\mathcal{K})$ admits the following form:
\begin{equation}\label{eq: kernel ridge regression}
	\hat v (\cdot) = \sum_{i=1}^M a_i \hat K^2_{\mathcal{S}}(\cdot, \mathbb{Y}^i), 
\end{equation}
where $\textbf{a} = (a_i)_{i\le M} = (\widehat{\textbf{K}}^2_{\mathcal{S}} + \lambda \text{I}_M)^{-1}\textbf{v}$, $[\widehat{\textbf{K}}^2_{\mathcal{S}}]_{ij} = \hat K^2_{\mathcal{S}}(\mathbb{Y}^i, \mathbb{Y}^j)$. Finally, to evaluate $\hat v (\cdot)$ at a new model $\mathbb{X}$, we need to compute the $M$--dimensional vector $(\hat K^2_{\mathcal{S}}(\mathbb{X}, \mathbb{Y}^i))_{i \le M}$ by using the above algorithm.

\subsection{Numerical examples}\label{sect: examples}
In this section we empirically investigate the effectiveness of the proposed signature kernel algorithm for two optimal stopping problems:
	\begin{enumerate}
		\item the pricing of American basket option with geometric put payoff;
		\item the optimal stopping of fractional Brownian motion.
	\end{enumerate}
	More precisely, treating these as distribution regression problems from probability measures on pathspace to $\mathbb{R}$, we endow the classical kernel Ridge regression (KRR) algorithm \cite{cristianini2000introduction} with the characteristic kernel $K^2_{\mathcal{S}}$ studied in previous sections and compare its performance against classical algorithms. All experiments have been run on an NVIDIA Tesla P100 GPU.

\subsubsection{American basket option pricing}

An American option gives the holder the right but not the obligation to exercise the option
associated with a non-negative payoff function $\Phi$ at any time up to maturity. An American
option can be approximated by a Bermudan option, which can be exercised only at 
specific dates $t_0 < t_1 < t_2 < ... < t_N$. If the time grid is chosen small enough, the American option is well approximated by the
Bermudan option. For $d \in \mathbb{N}$, we consider the Black-Scholes model by considering a $d$-dimensional geometric Brownian motion $(X_t)_{t\geq0}$ for the stock prices. At time $t \in [0,T]$ and for initial stock prices $X_t = x \in \mathbb{R}^d$, the price of the American option $V(t,x)$ can then be formulated as the following optimal stopping problem
\begin{equation*}
	V(t,x) =  \sup_{\tau \in [t,T]} \mathbb{E}^{\mathbb{Q}}\biggl[ e^{-r(\tau-t)} \Phi(S_{\tau}) \bigg| S_t=x \biggr]
\end{equation*}
with payoff function $\Phi : \mathbb{R}^d \to \mathbb{R}_+$, and where the supremum is taken over all stopping times. Here, we consider a geometric put payoff defined as follows
$$\Phi(x) = \left(K - \left(\prod_{i=1}^d x_i\right)^{1/d} \right)_+$$
with strike $K > 0$.

\begin{remark}
	As explained in \cite{lapeyre2021neural}, geometric put options on $d$-dimensional stocks following Black–Scholes are equivalent to one-dimensional put options on a $1$-dimensional stock following Black–Scholes with adjusted parameters. This observation provides a way to derive a "ground-truth" derivative price via the CRR binomial-tree method \cite{cox1979option}, in the sense that the price computed with this method converges to the correct price under the Black–Scholes model as the depth of the tree goes to infinity. This ground-truth answer can be then used to assess performance of alternative algorithms, as we shall do below.
\end{remark}

To perform KRR, we consider the classical supervised learning setting and we form a dataset $\{(\mathcal{X}_i, y_i)\}_{i=1}^N$ of $N = 100$ input-output pairs, where: 
\begin{itemize}
	\item $\mathcal{X}_i = \{x^{i,j} : [0,T] \to \mathbb{R}^d\}_{j=1}^{n_i}$ is a set of $n_i$  paths sampled from a $d$-dimensional Black-Scholes model, with fixed interest rate $r = 2 \%$, $d=20$ and volatility sampled uniformly at random $\sigma_i \in [0.1,0.5]$ over the time grid $\{0, T/\ell, 2T/\ell, ..., T\}$ with $\ell= 10$ as in \cite{herrera2021optimal}. For simplicity we set $n_1 = .... = n_N = n$. 
	\item $y_i$ is the price of the American option obtained via a binonal-tree method using a tree of depth $10\,000$ with strike $K=100$ and maturity $T=1$.
\end{itemize}

The dataset is randomly split into a training set of size $N_{\text{train}}=90$ and a test set of size $N_{\text{test}}=10$. We fit the KRR on the training set and evaluate the model on the test set.

\begin{remark}
	We note that here we are using sample paths over a coarse discretisation as inputs to our model, while the target labels $\{y_i\}_{i=1}^N$ are obtained using a much finer discretisation, which can be assumed to have converged to the true American option price. This means that our algorithm is learning the Bermudan option price as an approximation to the true American option price. 
\end{remark}

We benchmark our supervised learning kernel approach against the state-of-the art Longstaff and Schwartz (LS) algorithm introduced in \cite{longstaff2001valuing} by counting how many sample paths $n$ are required by both methods to produce an approximation error $\epsilon$ from the the target price obtained using the lattice method. We use the Mean Absolute Percent Error (MAPE), that is $\epsilon=N_{\text{test}}^{-1}\sum_{i=1}^{N_{\text{test}}}{|(\hat{y}_i-y_i)/y_i|}$ where $\{\hat{y}_i\}_{i=1}^{N_{\text{test}}}$ are the prices predicted by either algorithm. 
We use the code provided in \cite{herrera2021optimal} to apply the LS algorithm with basis functions given by the Taylor polynomials
up to degree two, as recommended in \cite{herrera2021optimal}.
As shown in Table \ref{table:basket_option}, the higher rank signature methods, that is KRR with kernels $K^2_{\mathcal{S}}$ and $K^1_{\mathcal{S}}$, both require significantly fewer sample paths from the models to produce a desired precision. We observe that for higher rank signature methods, the regression is fitted on $N_{\text{train}}$ base models and so depends on $n\times N_{\text{train}}$ sample paths. However, for any new prediction, the $n\times N_{\text{train}}$ training sample paths are fixed and do not need to be re-sampled; only $n$ additional samples are required. So for a total of $N_{\text{test}}$  new predictions we will have that $n(N_{\text{train}} + N_{\text{test}}) \approx nN_{\text{test}}$ when $N_{\text{train}} << N_{\text{test}}$.

\begin{table}[h]
	\caption{Geometric put option on Black–Scholes. Number of samples $n$ to produce a MAPE $\epsilon$ on the test set. }\label{table:basket_option}
	\begin{center}
		\begin{tabular}{l c c c } 
			\toprule
			& $\epsilon=10\%$ &$\epsilon=5\%$ & $\epsilon=2.5\%$\\ 
			\midrule
			Longstaff and Schwartz  &  8\,000 & 16\,000 &60\,000\\
			Higher rank signature (rank 1) &  50 & 250 & 1\,000 \\
			Higher rank signature (rank 2) & 50 & 200 & 500\\
			
	\bottomrule	
			
		\end{tabular}%
		
	\end{center}
\end{table}

\subsubsection{Stopped fractional Brownian motion}

To showcase the flexibility of our approach, we consider a second, this time non-Markovian, example of stopped fractional Brownian motion following \cite{Bayer2021stopping}. More precisely, we consider a one dimensional fractional Brownian motion $X^H$ with Hurst exponent $H \in [0,1)$ and tackle the optimal stopping problem where the payoff is given by the underlying process itself. 

To perform this optimal stopping task via kernel Ridge regression, as done in the previous example, we form a dataset $\{(\mathcal{X}_i, y_i)\}_{i=1}^N$ of input-output pairs, where: 
\begin{itemize}
	\item $\mathcal{X}_i = \{x^{i,j} : [0,1] \to \mathbb{R}^2\}_{j=1}^{n_i}$ is a set of $n_i$  paths sampled from a time-augmented fractional Brownian motion with Hurst exponent $H_i = (i+1)/40$ for $i=1,\ldots, 39$ over the time grid $\{0, 1/\ell, 2/\ell, ..., 1\}$ with $\ell = 10$. For simplicity we set $n_1 = .... = n_N = n = 500$;
	\item $\{y_i\}_{i=1}^{N}$ are the values obtained in \cite{becker2019deep}, with $\ell=100$.
\end{itemize}
We take $H = i/10$ for $i=1,\ldots, 10$ as a test set and all the others as training set. As a mean of assessing the performance of our model, in the table below we report the predictions obtained using the higher rank signature methods (first two columns) and compare them with the lower and upper bound estimates on the optimal value obtained in \cite{becker2019deep} (last two columns). As it can be observed, our predictions fall close to the target interval.

\begin{table}[h]
	\caption{Optimal stopping of fractional Brownian motion.}
	\begin{center}
		\begin{tabular}{l c c c c} 
			\toprule
			\multirowcell{2}[0pt][l]{} \multirowcell{2}[0pt][l]{H} &  \multicolumn{2}{c}{Higher rank signatures}  & \multicolumn{2}{c}{\cite{becker2019deep}}\\  
			& rank $1$ & rank $2$ &  lower bound & upper bound  \\ \midrule
			
			0.1  & 1.085  & \textbf{1.061} &  1.048 & 1.049\\
			
			0.2 & \textbf{0.684} &  0.689 & 0.658 & 0.659 \\
			0.3 & 0.363 &  \textbf{0.376} & 0.369 & 0.380 \\
			0.4 & 0.137 & \textbf{0.174}  & 0.155 & 0.158 \\
			0.5 & 0.070 & \textbf{0.065} &  0.000 & 0.005 \\
			0.6 & 0.177 & \textbf{0.143} &  0.115 & 0.118 \\
			0.7 & \textbf{0.189} & 0.229  & 0.206 & 0.207 \\
			0.8 & \textbf{0.278} & \textbf{0.276}  & 0.276 & 0.278 \\
			0.9 & 0.324 & \textbf{0.330} & 0.336 & 0.339 \\ 
			1.0 & 0.375 & \textbf{0.390}  & 0.395 & 0.395 \\
			\bottomrule
		\end{tabular}%
		\label{table:experiment}
	\end{center}
\end{table}

\section{Summary and Future Work}\label{sect: summary and future work}
In this section let us briefly summarize the main results obtained in the present paper and the essential ideas hidden behind them.
	\begin{itemize}
		\item The signature kernel regression approach can be used for calculating the values of pricing functionals of path--dependent derivatives which are weakly continuous. However, since the value functions of Optimal Stopping Problems are in general not continuous for the weak topology, this classical approach based on the normal expected signature cannot remain valid for the numerical computation of the prices for American type options.
		\item To overcome this discontinuity issue, D. Aldous introduced the notion of extended weak topology (also called the rank $1$ adapted topology in this paper) which is the weak topology induced by the prediction process $\hat X_t = \mathbb{P}[X \in \cdot| \mathcal{F}_t]$, $t \in I$ of the triple $(X, \mathbb{F}, \mathbb{P})$. Moreover, Aldous proved that if all filtrations are natural, i.e., generated by the processes themselves, then the value functions of OSPs (attached to continuous bounded payoffs) are continuous with respect to the extended weak topology.
		\item The rank $2$ signature $S^2$ can be viewed as the signature of the measure--valued path $\hat X_t$, $t \in I$. For a prediction process $\hat X$, $S^2(\hat X)$ takes values in a Hilbert space $\mathcal{H}^2$ which is embedded in the tensor algebra of $\mathcal{H}^1$, where the latter is the target space of normal signatures $S$ for Euclidean--valued paths. Using the techniques from kernel learning, we showed that  the $\mathcal{H}^2$--norm distances between expected rank $2$ signatures induce a reproducing kernel $K^2_{\mathcal{S}}(\cdot, \cdot)$ whose associated RKHS is dense in the space of bounded and extended weakly continuous functions on the set of filtered processes with natural filtrations. Hence, the value functions of OSPs can be approximated by linear combinations of kernel $K^2_{\mathcal{S}}(\cdot, \cdot)$ arbitrarily well, just like approximating continuous functions by polynomials. 
		\item One main contribution of this paper is that we exploit the CKME (Conditional Kernel Mean Embedding) approach to give an empirical estimation for the kernel $K^2_{\mathcal{S}}(\cdot, \cdot)$ so that one can find numerical solutions to the prices of American options by performing a kernel regression. Furthermore, thanks to the intrinsic dynamics satisfying by the rank $2$ signatures and the inner product structure of $\mathcal{H}^2$, we showed that our numerical scheme for the computation of kernel $K^2_{\mathcal{S}}(\cdot, \cdot)$ essentially only involves with solving a family of $2$--dimensional linear hyperbolic PDEs (the Goursat problem), whose coefficients consist of  Euclidean inner product of $\mathbb{R}^d$--valued vectors. This signature PDE trick helps us to reduce the complexity of this infinite dimensional regression problem significantly.
	\end{itemize}
	For the relevant future work \footnote{We appreciate the comments and questions from anonymous referees which motivate us for studying these topics mentioned here.} we can ask the following interesting questions:
	\begin{itemize}
		\item How the choices of base models $(\mathbb{Y}^i)_{i \le M}$ used in kernel regression affect the performance of the algorithm. 
		\item Applying the approach presented in this paper to more complicated exotic options of American type; for instance, the American moving average options as in \cite{Bernhart2011movingaverage}, whose value functions depend on the history of the paths rather than the spot value only. As mentioned at the end of the introduction, in principle our procedure works for pricing of such options provided we could show that their value functions are bounded and continuous with respect to the extended weak topology or adapted Wasserstein topology.
		\item As we have shown in Appendix \ref{subsect: discretize}, given a model on continuous time, then theoretically the value functions on discretized markets converge to the counterpart of the original continuous time model for any time--discretization sequences with mesh size tending to $0$. It is interesting to investigate how the time--discretization scheme affects the corresponding convergence rate in our algorithm. We expect to get a quantitative estimates by using the tools developed in \cite{Bartl2021Wasserstein}.
		\item All the theoretical results presented in this paper are based on the Stone--Weierstrass theorem, which requires the target functions to be bounded and continuous. However, many reward functions in stochastic optimal control  problems are unbounded. As a first step to generalize our approach to unbounded cases, we may expect to show that the (higher rank) expected signatures induce a RKHS which is dense in $L^p$ space for those reward functions of growth order $p \ge 1$, as in the recent paper \cite{Bartl2021Wasserstein}. 
\end{itemize}

\appendix

\section{Basic Properties of Signatures}\label{appendix: signature}

\subsection{Hilbert space structure on tensor algebra}\label{subsect: hilbert space on tensor algebra}
Let $(\mathcal{H}, \langle \cdot, \cdot \rangle_{\mathcal{H}})$ be a separable Hilbert space, and we fix an orthonormal basis $(e_i)_{i \in \mathbb{N}}$ for $\mathcal{H}$. For each $n \ge 1$ we consider the $n$--th tensor product of $\mathcal{H}$, namely $\mathcal{H}^{\otimes n} = \text{span}(h_1 \otimes \ldots \otimes h_n: h_i \in \mathcal{H})$. We endow $\mathcal{H}^{\otimes n}$ with an inner product $\langle \cdot,\cdot \rangle_{\mathcal{H}^{\otimes n}}$ such that 
$$
\langle h_1 \otimes \ldots \otimes h_n,  h^\prime_1 \otimes \ldots \otimes h^\prime_n \rangle_{\mathcal{H}^{\otimes n}} := \prod_{i=1}^n \langle h_i, h^\prime_i \rangle_{\mathcal{H}}.
$$
Let $\mathcal{H}^{\hat \otimes n}$ be the completion of $\mathcal{H}^{\otimes}$ with respect to $\langle \cdot,\cdot \rangle_{\mathcal{H}^{\otimes n}}$, which is a separable Hilbert space with $\{e_{i_1} \otimes \ldots \otimes e_{i_n}: i_1,\ldots, i_n \in \mathbb{N}\}$ being an orthonormal basis. Now we define the following spaces:
\begin{itemize}
	\item $T(\mathcal{H}) = \bigoplus_{n=0}^\infty \mathcal{H}^{\otimes n}$,
	\item $\textbf{T}(\mathcal{H}) = \bigoplus_{n=0}^\infty \mathcal{H}^{\hat \otimes n}$,
	\item $T((\mathcal{H})) = \prod_{n=0}^\infty \mathcal{H}^{\otimes n}$, 
	\item $\textbf{T}((\mathcal{H})) = \prod_{n=0}^\infty \mathcal{H}^{\hat \otimes n}$.
\end{itemize}
Clearly we have $T((\mathcal{H}))$ is the dual of $T(\mathcal{H})$ and $\textbf{T}((\mathcal{H}))$ is the dual of $\textbf{T}(\mathcal{H})$. We are interested in the following subset of $\textbf{T}((\mathcal{H}))$: for $\textbf{h} = \sum_{n=0}^\infty \textbf{h}_n$ with $\textbf{h}_n \in \mathcal{H}^{\hat \otimes n}$, let 
\begin{equation}\label{eq: Hilbert space in tensor algebra}
	\tilde{\mathcal{H}} := \Big\{\textbf{h} \in  \textbf{T}((\mathcal{H})): \sum_{n=0}^\infty \langle\textbf{h}_n, \textbf{h}_n\rangle_{\mathcal{H}^{\hat \otimes n}} < \infty \Big\},
\end{equation}
and $\langle \cdot, \cdot \rangle_{\tilde{\mathcal{H}}}$ be the inner product on $\tilde{\mathcal{H}}$ such that $\langle\textbf{h}, \textbf{h}^\prime \rangle_{\tilde{\mathcal{H}}} = \sum_{n=0}^\infty \langle\textbf{h}_n, \textbf{h}^\prime_n\rangle_{\mathcal{H}^{\hat \otimes n}}$. It is easy to see that $(\tilde{\mathcal{H}}, \langle\cdot, \cdot \rangle_{\tilde{\mathcal{H}}})$ is a separable Hilbert space.\\
In the present paper we will mainly consider the following special Hilbert spaces constructed as in \eqref{eq: Hilbert space in tensor algebra}:
\begin{itemize}
	\item $\mathcal{H}^0 := \mathbb{R}^d$,
	\item $\mathcal{H}^1 := \widetilde{\mathbb{R} \oplus \mathcal{H}^0} \subset \textbf{T}((\mathbb{R}^{d+1}))$,
	\item $\mathcal{H}^2 := \widetilde{\mathbb{R} \oplus \mathcal{H}^1} \subset \textbf{T}((\mathbb{R} \oplus \mathcal{H}^1))$,
	\item $\mathcal{H}^r := \widetilde{\mathbb{R} \oplus \mathcal{H}^{r-1}} \subset \textbf{T}((\mathbb{R} \oplus \mathcal{H}^{r-1}))$ for $r \ge 1$,
\end{itemize}
where $\mathbb{R} \oplus \mathcal{H}^{r}$ is equipped with the natural Hilbert space structure.\\
For a given Hilbert space $\mathcal{H}$, we use $\exp_{\otimes, \mathcal{H}}$ to denote the tensor exponential map from $\mathcal{H}$ to $T((\mathcal{H}))$. i.e.,
$$
\exp_{\otimes, \mathcal{H}}(h) := \sum_{n=0}^\infty \frac{h^{\otimes n}}{n!}.
$$
Clearly, thanks to the factorial decay, we have $\exp_{\otimes, \mathcal{H}}(h) \in \tilde{\mathcal{H}}$ for all $h \in \mathcal{H}$. Using the above notations, we set $\exp_{r} = \exp_{\otimes, \mathbb{R}\oplus \mathcal{H}^r}$ for every $r \ge 0$ such that for any $t \in \mathbb{R}$ and $h \in \mathcal{H}^r$, 
$$
\exp_{r}(t,h) = \sum_{n=0}^\infty \frac{(t,h)^{\otimes n}}{n!} \in \mathcal{H}^{r+1}.
$$
\begin{remark}
	For any given function $\phi: \mathbb{N} \cup \{0\} \to (0,\infty)$ one can actually define a $\phi$--inner product on the tensor algebra $\textbf{T}(\mathcal{H})$, namely 
	$$
	\langle \mathbf{h}, \mathbf{h}^\prime \rangle_{\phi} := \sum_{n=0}^\infty \phi(n) \langle \mathbf{h}_n, \mathbf{h}^\prime_n \rangle_{\mathcal{H}^{\hat \otimes n}}.
	$$
	In the present paper we only consider the classical case that $\phi \equiv 1$ (as in \cite{chevyrev2018signature}). For more examples of the weights $\phi$ and their applications we refer readers to \cite{Cass2021general}.
\end{remark}

\subsection{Algebraic properties of signatures}\label{subsect: algebra of higher rank sig}
First let us recall the definition of (time--augmented) signature for discrete--time path.

\begin{definition}\label{def: signature}
	Let $I = \{0 = t_0 < t_1 < \ldots < t_N = T\}$ and $x: I \to \mathcal{H}$ be a discrete--time path taking values in a Hilbert space $\mathcal{H}$. Then the time--augmented signature of $x$ is defined as
	$$
	S_{\mathcal{H}}(x) := \prod_{i=0}^N \exp_{\otimes, \mathbb{R} \oplus \mathcal{H}} (\Delta t_i, x_{t_{i-1},t_i}) \in \widetilde{\mathbb{R} \oplus \mathcal{H}}
	$$
	where $t_{-1} := 0$, $x_{t_{-1}} := 0$, $\Delta t_i = t_{i} - t_{i-1}$, $x_{t_{i-1},t_i} = x_{t_i} - x_{t_{i-1}}$ and $\widetilde{\mathbb{R} \oplus \mathcal{H}} \subset \textbf{T}((\mathbb{R} \oplus \mathcal{H}))$ is the Hilbert space defined as in Appendix \ref{subsect: hilbert space on tensor algebra}.
\end{definition}
\begin{remark}\label{remark: continuous time signature}
	The notion of (time--augmented) signature can be extended to continuous $\mathcal{H}$--valued paths defined on the continuous time interval $[0,T]$ with bounded variation. More precisely, for $x \in C^{1\text{--var}}([0,T],\mathcal{H})$, its signature is the collection of all iterated integrals
	\begin{equation}\label{eq: signature for path on continuous time}
		S_{\mathcal{H}}(x) = \sum_{n=0}^\infty S^n_{\mathcal{H}}(x), \quad   S^n_{\mathcal{H}}(x) := \int_{0<u_1<\ldots<u_n<T} d(u_1,x_{u_1}) \otimes \ldots \otimes d(u_n,x_{u_n}),
	\end{equation}
	where $S^0_{\mathcal{H}} \equiv 1$ and the integrals are given in the sense of Riemann--Stieltjes, see \cite[Sect. 2.2.2]{Lyons2007rough}. One can easily check that  the signature (see Definition \ref{def: signature}) of discrete--time path $x$ on $I$ actually coincides with the one given in terms of equation \eqref{eq: signature for path on continuous time} by extending $x$ to piecewise linear path $\tilde{x}$ on $[0,T]$ via classical linear interpolation.
\end{remark}
We also recall that there is a bilinear map defined on the graded vector space $T(\bar{\mathcal{H}})$ into itself, $\bar{\mathcal{H}} := \mathbb{R} \oplus \mathcal{H}$, which is denoted by $\shuffle$ and called the shuffle product. A key property of signature mapping (of discrete--time paths) is that as a finite product of tensor exponential, $S_{\mathcal{H}}(x)$ takes value in the character group inside the dual space $T((\bar{\mathcal{H}}))$, and the group property is translated into the following algebraic relation:
\begin{equation}\label{eq: shuffle product relation}
	\langle \textbf{h}_1 \shuffle \textbf{h}_2, S_{\mathcal{H}}(x) \rangle   =  \langle \textbf{h}_1,  S_{\mathcal{H}}(x)\rangle\langle  \textbf{h}_2,S_{\mathcal{H}}(x)  \rangle
\end{equation}
for all $\textbf{h}_1, \textbf{h}_2 \in T(\bar{\mathcal{H}})$, and $\langle \cdot,\cdot \rangle$ denotes the dual pairing of $T(\bar{\mathcal{H}})$ and $T((\bar{\mathcal{H}}))$. All above results can be found in \cite[Chapters 2,3]{Reutenauer1993Lie} as $T(\bar{\mathcal{H}})$ corresponds to the free Lie algebra over the alphabet set consisting of any Hamel basis of the vector space $\bar{\mathcal{H}}$. Of course signatures of continuous bounded variation paths also satisfy the relation \eqref{eq: shuffle product relation}, see e.g. \cite{Lyons2007rough}.\\
By \cite[Sect. 5.3]{chevyrev2018signature} we know that the time--augmented signature $S_{\mathcal{H}}$ is injective on the set of discrete--time $\mathcal{H}$--valued paths. Combining this with the relation \eqref{eq: shuffle product relation} we can conclude that
\begin{equation}\label{eq: point separating algebra}
	\{\langle \textbf{h}, S_{\mathcal{H}}(\cdot) \rangle:     \textbf{h} \in T(\bar{\mathcal{H}}) \} \subset \mathbb{R} ^{(\mathcal{H})^I} \text{ is a point--separating algebra}.
\end{equation}

\subsubsection{Analytic properties of signatures}\label{subsect: analytic signature}  
For $I = \{0=t_0 <t_1 < \ldots <t_N = T\}$, we now equip the set of discrete--time $\mathcal{H}$--valued paths (indexed by $I$) with the total variation norm $\|\cdot\|_{1\text{--var}}$ with $\|x\|_{1\text{--var}}:= \sum_{i=0}^N\|x_{t_i} - x_{t_{i-1}}\|_{\mathcal{H}}$. A well--known result in rough path theory, see \cite[Theorem 3.1.3]{Lyons2002control}, tells us that the (time--augmented) signature map $S_{\mathcal{H}}$ is locally Lipschitz continuous with respect to $\|\cdot\|_{1\text{--var}}$, namely for every $M>0$, there exists a constant $C = C(M,T)$ such that for all $x,y \in (\mathcal{H})^I$ with $\|x\|_{1\text{--var}}, \|y\|_{1\text{--var}} \le M$, 
\begin{equation}\label{eq: continuity of signature}
	\|S_{\mathcal{H}}(x) - S_{\mathcal{H}}(y)\|_{\widetilde{\mathbb{R}\oplus \mathcal{H}}} \le C(M,T)\|x - y\|_{1\text{--var}}.
\end{equation}

\section{Basics on Universal Feature Maps and Vector--valued RKHS}\label{sect: kernel learning}
Let $\mathcal{H}$ denote a Hilbert space and $\mathcal{Z}$ denote a topological space. 
\begin{definition}\label{def: universal features}(\cite[Definition 2.2]{chevyrev2018signature})
	A mapping $\Phi: \mathcal{Z} \mapsto \mathcal{H}$ is called a universal feature map if it satisfies one of the following equivalent conditions:
	\begin{enumerate}
		\item The set $\{\langle h, \Phi(\cdot) \rangle_{\mathcal{H}}: h \in \mathcal{H} \}$ is dense in $C_b(\mathcal{Z})$ with respect to the strict topology.
		\item The RKHS induced by the kernel $k_\Phi(\cdot,\cdot)$ is dense in $C_b(\mathcal{Z})$ with respect to the strict topology, where $k_\Phi(z,z^\prime) := \langle \Phi(z), \Phi(z^\prime) \rangle_{\mathcal{H}}$ for $z. z^\prime \in \mathcal{Z}$.
		\item For finite Borel measures $\mu$ and $\nu$ on $\mathcal{Z}$, $\mu = \nu$ iff $\int \Phi(z) \mu(d z) = \int \Phi(z) \nu(d z)$.
	\end{enumerate}
\end{definition}
In the above definition, the ``universality'' of $\Phi$ is taken with respect to the strict topology on the space $C_b(\mathcal{Z})$ of continuous bounded real--valued functions. This notion can be naturally extended to vector--valued (continuous bounded) functions equipped with the $L^2$--topology.
\begin{definition}\label{def: vector valued RKHS}(\cite[Definition 2.10]{park2020measure})
	An $\mathcal{H}$--valued RKHS on $\mathcal{Z}$ is a Hilbert space $\mathcal{G}_{\mathcal{H}}$ such that
	\begin{enumerate}
		\item every $g \in \mathcal{G}_{\mathcal{H}}$ is a function from $\mathcal{Z}$ to $\mathcal{H}$,
		\item for every $z \in \mathcal{Z}$ the evaluation map $\text{ev}_z(g) = g(z)$ is a bounded linear map from $\mathcal{G}_{\mathcal{H}}$ to $\mathcal{H}$.
	\end{enumerate}
\end{definition}
As for the scalar case, every $\mathcal{H}$--valued RKHS on $\mathcal{Z}$ admits a (positive--definite) $\mathcal{H}$--kernel $\Gamma: \mathcal{Z} \times \mathcal{Z} \to L(\mathcal{H})$ in the sense of \cite[Definition 2.11]{park2020measure} and conversely every $\mathcal{H}$--kernel $\Gamma$ determines a $\mathcal{H}$--valued RKHS. 

\begin{definition}\label{def: universal kernel}\cite[Definition 4.3]{park2020measure}
	A $\mathcal{H}$--kernel $\Gamma$ with $\mathcal{H}$--valued RKHS $\mathcal{G}_{\mathcal{H}}$ is universal if $\mathcal{G}_{\mathcal{H}}$ is a subset of $C_0(\mathcal{Z},\mathcal{H})$ (the space of continuous functions vanishing at infinity) and it is dense in $L^2(\mathcal{Z},\nu;\mathcal{H})$ for any probability measure $\nu$.
\end{definition}
Suppose there exits a scalar kernel $k_{\mathcal{Z}}$ on $\mathcal{Z}$, then 
\begin{equation}\label{eq: a nice vector valued kernel}
	\Gamma := k_{\mathcal{Z}}(\cdot,\cdot)\text{Id}_{\mathcal{H}}
\end{equation}
is an $\mathcal{H}$--kernel. In this case $\Gamma$ is universal in the sense of Definition \ref{def: universal kernel} if $k_{\mathcal{Z}}(\cdot,\cdot)$ is a universal scalar kernel in the sense of Definition \ref{def: universal features}, see \cite[Example 14]{Carmeli2010Mercer}. \\
Note that thanks to \eqref{eq: universality of signature} the signature kernel $k_S$ is a universal scalar kernel on pathspace, and consequently the kernel $k_{S}(\cdot,\cdot)\text{Id}_{\mathcal{H}}$ is universal in the sense of Definition \ref{def: universal kernel}. This fact was used in Section \ref{sect: estimation of CKME}.
\section{Optimal Stopping Problem for Processes with General Filtration}\label{sect: higher rank adapted topologies}
In this section we introduce higher rank signatures and show their relation with higher rank adapted topologies as well as their applications in solving OSP for filtered processes with general filtration. Throughout this sequel, all filtered processes are defined on a given discrete time interval $I = \{0 = t_0 < t_1 < \ldots < t_N = T\}$ and take values in a compact subset $\mathcal{K} \subset \mathbb{R}^d$.\\
Recall that (cf. Defintion \ref{def: higher rank prediction processes}) for $\mathbb{X} \in \mathcal{F}\mathcal{P}_I(\mathcal{K})$, for any $r \ge 1$ we can define its rank $r$ prediction process $\hat X^r$ recursively by setting
$$
\hat X^r_t := \mathbb{P}(\hat X^{r-1} \in \cdot| \mathcal{F}_t), 
\quad t \in I.
$$
Then the convergence in the rank $r$ adapted topology is given in terms of the weak convergence of the law of rank $r$ prediction processes. Thanks to such a hierarchical structure of prediction processes, we can apply a recursive way to construct rank $r$ signatures for rank $r$ prediction processes accordingly: Let $S^1 = S$ be the signature map given in Definition \ref{def: signature rank 0} which serves as the rank $1$ signature of $\hat X^0 := X$, then suppose that we have obtained the rank $r$ signature map $S^{r}$ for the rank $r-1$ prediction process $\hat X^{r-1}$ for some $r \ge 1$, then the rank $r+1$ signature map for the rank $r$ prediction process $\hat X^r$ will be defined as:
$$
S^{r+1}(\hat X^r) = S_{\mathcal{H}^r} ( t \mapsto \mathbb{E}[S^r(\hat X^{r-1})| \mathcal{F}_t] )
$$
where $\mathcal{H}^r$ denotes the codomain of $S^{r}$ and $S_{\mathcal{H}^r}$ is the signature map for $\mathcal{H}^r$-- valued path, see Appendix \ref{appendix: signature}. Note that since the signature map $S_{\mathcal{H}^k}$ is continuous for every $k \ge 1$ and $X = \hat X^0$ takes values in compact set $\mathcal{K}$, every conditional expectation $\mathbb{E}[S^r(\hat X^{r-1})| \mathcal{F}_t]$ is a well--defined $\mathcal{H}^r$-- valued Bochner integral.\\
Having the notion of rank $r$ signatures at hand, we can generalize Theorems \ref{thm: main theorem 1} and  \ref{thm: main theorem 2} to rank $r$ adapted topology for any $r \ge 0$.
\begin{theorem}(\cite[Theorem 4]{bonnier2020adapted})\label{thm: general MMD}
	For any $r \ge 0$ we have
	$$
	\mathbb{X} \sim _{r}\mathbb{Y} \iff \mathcal{L}(\hat X^{r}) = \mathcal{L}(\hat Y^{r}) \iff \mathbb{E}_{\mathbb{P}}[S^{r+1}(\hat X^{r})] = \mathbb{E}_{\mathbb{Q}}[S^{r+1}(\hat Y^{r})].
	$$
	Moreover, the rank $r+1$ Signature MMD 
	$$
	\mathcal{D}^{r+1}_{\mathcal{S}}(\mathbb{X}, \mathbb{Y}) := \| \mathbb{E}_{\mathbb{P}}[S^{r+1}(\hat X^r)] - \mathbb{E}_{\mathbb{Q}}[S^{r+1}(\hat Y^r)] \|_{\mathcal{H}^{r+1}}
	$$
	metrizes the rank $r$ adapted topology $\hat \tau_r$.
\end{theorem}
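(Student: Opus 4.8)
We argue by induction on $r$, and it is convenient to carry along the stronger inductive hypothesis

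\textbf{(IH$_r$):} $S^{r+1}$ is a universal feature map (Definition~\ref{def: universal features}) on the path space $(\mathcal{M}^r)^I$,

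where $\mathcal{M}^0:=\mathcal{K}$ and $\mathcal{M}^{k}:=\mathcal{P}((\mathcal{M}^{k-1})^I)$ is the state space of rank $k$ prediction processes, each equipped with the weak topology. A preliminary observation is that every $\mathcal{M}^k$ is compact and metrizable: this holds for $k=0$, and if $\mathcal{M}^{k-1}$ is compact metrizable then so is $(\mathcal{M}^{k-1})^I$ and hence, by Prokhorov's theorem, so is $\mathcal{P}((\mathcal{M}^{k-1})^I)$. The base case (IH$_0$) is the content of Theorems~\ref{thm: main theorem 1} and~\ref{thm: main theorem 2}: the time--augmented signature $S=S^1$ is injective on discrete--time $\mathbb{R}^d$--valued paths, so by the shuffle identity~\eqref{eq: shuffle product relation} the set $\{\langle\mathbf{h},S(\cdot)\rangle:\mathbf{h}\in T(\mathbb{R}\oplus\mathbb{R}^d)\}$ is a point--separating subalgebra of $C(\mathcal{K}^I)$ containing the constants, see~\eqref{eq: point separating algebra}, and Stone--Weierstrass on the compact set $\mathcal{K}^I$ gives density in the uniform ($=$ strict) topology.

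For the inductive step, assume (IH$_{r-1}$), i.e.\ $S^r$ is a universal feature map on $(\mathcal{M}^{r-1})^I$. Let $\beta^r:\mathcal{M}^r\to\mathcal{H}^r$, $\beta^r(\nu):=\int S^r(w)\,\nu(dw)$, be the associated kernel mean embedding; by the recursive definition of higher rank signatures, $S^{r+1}=S_{\mathcal{H}^r}\circ(\beta^r)_{*}$ on $(\mathcal{M}^r)^I$, where $(\beta^r)_{*}$ acts coordinatewise. Since $S^r$ is continuous and bounded on the compact set $(\mathcal{M}^{r-1})^I$ (continuity of the signature~\eqref{eq: continuity of signature}), the Bochner integral $\beta^r$ is weak--to--norm continuous, and condition (3) of Definition~\ref{def: universal features} for $S^r$ says precisely that $\beta^r$ is injective on $\mathcal{M}^r=\mathcal{P}((\mathcal{M}^{r-1})^I)$. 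A continuous injection of a compact space into a Hausdorff space is a homeomorphism onto its image, so $(\beta^r)_{*}$ is a homeomorphism of $(\mathcal{M}^r)^I$ onto the compact set $\mathcal{I}^r:=(\beta^r(\mathcal{M}^r))^I\subset(\mathcal{H}^r)^I$. Now $S_{\mathcal{H}^r}$ is injective on $(\mathcal{H}^r)^I$ and, by~\eqref{eq: shuffle product relation}--\eqref{eq: point separating algebra}, $\{\langle\mathbf{h},S_{\mathcal{H}^r}(\cdot)\rangle:\mathbf{h}\in T(\mathbb{R}\oplus\mathcal{H}^r)\}$ is a point--separating algebra of continuous functions on $\mathcal{I}^r$ containing the constants; transporting it along the homeomorphism $(\beta^r)_{*}$ shows that $\{\langle\mathbf{h},S^{r+1}(\cdot)\rangle:\mathbf{h}\in T(\mathbb{R}\oplus\mathcal{H}^r)\}$ is a point--separating subalgebra of $C((\mathcal{M}^r)^I)$ containing the constants. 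Stone--Weierstrass on the compact space $(\mathcal{M}^r)^I$ then gives density in the uniform ($=$ strict) topology, which is (IH$_r$).

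It remains to extract the two assertions of the theorem from (IH$_r$). For the characterization, $\mathbb{E}_{\mathbb{P}}[S^{r+1}(\hat X^r)]=\int S^{r+1}\,d\mathcal{L}(\hat X^r)$ and likewise for $\mathbb{Y}$, with $\mathcal{L}(\hat X^r),\mathcal{L}(\hat Y^r)\in\mathcal{P}((\mathcal{M}^r)^I)$; so condition (3) of Definition~\ref{def: universal features} applied to $S^{r+1}$ yields $\mathcal{L}(\hat X^r)=\mathcal{L}(\hat Y^r)\iff\mathbb{E}_{\mathbb{P}}[S^{r+1}(\hat X^r)]=\mathbb{E}_{\mathbb{Q}}[S^{r+1}(\hat Y^r)]$, while $\mathbb{X}\sim_r\mathbb{Y}\iff\mathcal{L}(\hat X^r)=\mathcal{L}(\hat Y^r)$ is Definition~\ref{def: higher rank prediction processes}. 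For the metrization, $\mathcal{D}^{r+1}_{\mathcal{S}}(\mathbb{X},\mathbb{Y})$ is the maximum mean discrepancy on $\mathcal{P}((\mathcal{M}^r)^I)$ of the bounded, continuous, characteristic kernel $(\hat x,\hat y)\mapsto\langle S^{r+1}(\hat x),S^{r+1}(\hat y)\rangle_{\mathcal{H}^{r+1}}$, and---by the same argument used for Theorem~\ref{thm: main theorem 2} (the forward implication is continuity of the kernel; the converse uses weak compactness of $\mathcal{P}$ over the compact space $(\mathcal{M}^r)^I$ together with the characteristic property)---such an MMD generates the topology of weak convergence of $\mathcal{L}(\hat X^r)$, which is $\hat\tau_r$ by Definition~\ref{def: higher rank prediction processes}. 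The crux of the proof is the inductive step: one must check that the state spaces $\mathcal{M}^r$ stay compact and metrizable and, above all, that the kernel mean embedding $\beta^r$ is a homeomorphism onto its image, so that the point--separating--algebra/Stone--Weierstrass machinery genuinely transports from level $r-1$ to level $r$; formulating the inductive hypothesis as the a priori stronger statement ``universal feature map on the whole path space $(\mathcal{M}^r)^I$'', rather than merely ``characterizes the laws of rank $r$ prediction processes'', is what makes the recursion close.
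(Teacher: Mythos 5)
Your induction is correct, but note that the paper does not actually prove this statement itself: Theorem~\ref{thm: general MMD} is quoted from \cite[Theorem 4]{bonnier2020adapted}, and the paper only proves the $r=0$ and $r=1$ instances (Theorems~\ref{thm: main theorem 1} and~\ref{thm: main theorem 2}) in Appendix~\ref{appendix: Proof}, with Remark~\ref{remark: a discussion on general filtration cases} merely gesturing at the recursion. What you have written is precisely the missing induction, and it runs on the same engine as the paper's own arguments for the low ranks: injectivity of the time--augmented signature plus the shuffle identity~\eqref{eq: shuffle product relation} give a point--separating subalgebra, and Stone--Weierstrass on a compact path space upgrades this to universality. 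The genuinely new ingredient you supply --- and the one the paper glosses over --- is the observation that the kernel mean embedding $\beta^r$ is a continuous injection of the compact space $\mathcal{M}^r$ into $\mathcal{H}^r$, hence a homeomorphism onto its image, which is exactly what lets the point--separation property transport from rank $r-1$ to rank $r$ and makes the recursion close; formulating the inductive hypothesis as full universality of $S^{r+1}$ on $(\mathcal{M}^r)^I$ rather than as the characterization of laws is the right strengthening. Two steps deserve one more line each if this is to be fully self--contained: the weak--to--norm continuity of $\beta^r$ (which follows from total boundedness of $S^r((\mathcal{M}^{r-1})^I)$ in $\mathcal{H}^r$ and a partition--of--unity approximation of $S^r$ by functions of the form $\sum_i \phi_i(\cdot)h_i$ with $\phi_i$ continuous and scalar), and the converse direction of the metrization claim, where the clean statement is that $\mathcal{L}(\hat X^r)\mapsto \mathbb{E}_{\mathbb{P}}[S^{r+1}(\hat X^r)]$ is itself a continuous injection of the compact space $\mathcal{P}((\mathcal{M}^r)^I)$ into $\mathcal{H}^{r+1}$, hence a homeomorphism onto its norm--compact image. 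Neither is a gap, only a compression.
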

As we have mentioned in Remark \ref{remark: counterexample of OSP for general filtrations}, in the contrast to the case of filtered processes with natural filtration, the value functions in OSP for general filtered processes are only continuous for higher rank adapted topologies. 
\begin{theorem}\label{thm: equivalence of adapted wasserstein metric and higher rank MMD}
	Let $I = \{0 = t_0 < t_1 < \ldots < t_N =T\}$ and $\mathcal{K} \subset \mathbb{R}^d$ be a compact subset. Recall that $\Lambda_{\text{plain}}(\mathcal{K}) \subset \mathcal{F}\mathcal{P}_I(\mathcal{K})$ denotes the subset of all filtered processes with natural filtration. 
	\begin{enumerate}
		\item Restricted to $\Lambda_{\text{plain}}(\mathcal{K})$, the adapted Wasserstein metric $\mathcal{A}\mathcal{W}_p$ and the rank $2$ Signature MMD $\mathcal{D}^2_{\mathcal{S}}$ generated the same topology $\hat \tau_1$, for any $p \in [1,\infty)$.
		\item On the whole space $\mathcal{F}\mathcal{P}_I(\mathcal{K})$ the adapted Wasserstein metric $\mathcal{A}\mathcal{W}_p$ and the rank $N+1$ Signature MMD $\mathcal{D}^{N+1}_{\mathcal{S}}$ generate the same topology $\hat \tau_N$, for any $p \in [1,\infty)$.
		\item The value function $v$ of Optimal Stopping Problem  defined on the whole space $\mathcal{F}\mathcal{P}_I(\mathcal{K})$ is continuous for the rank $N+1$ Signature MMD $\mathcal{D}^{N+1}_{\mathcal{S}}$.
	\end{enumerate}
\end{theorem}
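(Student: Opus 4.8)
The plan is to obtain all three parts by combining results already recorded in the paper with the corresponding statements for the bicausal Wasserstein distance from the literature on adapted topologies; no fresh analysis is needed beyond matching the rank conventions carefully. The unifying observation is elementary: if two (pseudo)metrics both metrize one and the same topology, then they generate the same topology, and a function continuous for that topology is continuous for either metric.

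For part~(1), I would argue as follows. Theorem~\ref{thm: main theorem 2}(2) already tells us that $\mathcal{D}^2_{\mathcal{S}}$ metrizes the rank~$1$ adapted topology $\hat\tau_1$ on $\Lambda_{\text{plain}}(\mathcal{K})$. The ``all adapted topologies are equal'' theorem of \cite{backhoff2020all} shows that, restricted to processes with their natural filtration, the topology induced by $\mathcal{A}\mathcal{W}_p$ coincides with $\hat\tau_1$ for every $p\in[1,\infty)$ (compactness of $\mathcal{K}$ makes all $p$-moments finite, so the choice of $p$ is immaterial). Hence both $\mathcal{A}\mathcal{W}_p$ and $\mathcal{D}^2_{\mathcal{S}}$ metrize $\hat\tau_1$ on $\Lambda_{\text{plain}}(\mathcal{K})$, which is exactly~(1).

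Part~(2) is the same strategy one level up the hierarchy. By Theorem~\ref{thm: general MMD} with $r=N$, the rank~$N+1$ Signature MMD $\mathcal{D}^{N+1}_{\mathcal{S}}$ metrizes $\hat\tau_N$ on the whole space $\mathcal{F}\mathcal{P}_I(\mathcal{K})$. For the counterpart statement about $\mathcal{A}\mathcal{W}_p$ I would cite \cite{Bartl2021Wasserstein}: on the set of filtered processes indexed by the $(N{+}1)$-point grid $I$ and valued in the compact set $\mathcal{K}$, the adapted (bicausal) Wasserstein distance induces precisely the rank~$N$ adapted topology $\hat\tau_N$; this is the general-filtration analogue of the $\Lambda_{\text{plain}}$ result, and it is exactly why rank~$N$ rather than rank~$1$ is the right level here, as recalled in Remark~\ref{remark: counterexample of OSP for general filtrations}. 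Since $\mathcal{A}\mathcal{W}_p$ and $\mathcal{D}^{N+1}_{\mathcal{S}}$ both metrize $\hat\tau_N$, they generate the same topology, giving~(2). Part~(3) then drops out immediately: \cite[Theorem~6.1]{Bartl2021Wasserstein}, recalled in Remark~\ref{remark: counterexample of OSP for general filtrations}, states that the value function $v$ of an OSP with bounded continuous payoff is $\hat\tau_N$-continuous on $\mathcal{F}\mathcal{P}_I(\mathcal{K})$; by~(2) the metric $\mathcal{D}^{N+1}_{\mathcal{S}}$ metrizes $\hat\tau_N$, so $\mathcal{D}^{N+1}_{\mathcal{S}}(\mathbb{X}^n,\mathbb{X})\to 0$ forces $v(\mathbb{X}^n)\to v(\mathbb{X})$.

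The main obstacle is not analytic but bookkeeping: the sources involved (Aldous, Hoover--Keisler, \cite{backhoff2020all}, \cite{Bartl2021Wasserstein}, \cite{bonnier2020adapted}) use slightly different indexing for ``adapted distributions'' and their associated topologies, and one must verify that, for processes on a grid with $N+1$ time points and general filtrations, the rank featuring in the $\mathcal{A}\mathcal{W}_p$ characterization is exactly the $N$ appearing in $\hat\tau_N$ and $\mathcal{D}^{N+1}_{\mathcal{S}}$. A secondary point to handle cleanly is that $\mathcal{A}\mathcal{W}_p$ and $\mathcal{D}^{N+1}_{\mathcal{S}}$ are only pseudometrics on $\mathcal{F}\mathcal{P}_I(\mathcal{K})$, both vanishing precisely on pairs $\mathbb{X}\sim_N\mathbb{Y}$, so the assertion ``generate the same topology'' is most transparently stated after passing to the common quotient $\mathcal{F}\mathcal{P}_I(\mathcal{K})/\!\sim_N$, on which both become genuine metrics and all the identifications above are literal.
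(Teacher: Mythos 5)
Your proposal is correct and follows essentially the same route as the paper: all three parts are obtained by pairing the Signature MMD metrization results (Theorem \ref{thm: main theorem 2} and Theorem \ref{thm: general MMD}) with the literature results of \cite{backhoff2020all} and \cite{Bartl2021Wasserstein} identifying the $\mathcal{A}\mathcal{W}_p$-topology with $\hat\tau_1$ (natural filtrations) and $\hat\tau_N$ (general filtrations), then invoking the elementary fact that two metrics metrizing the same topology generate the same topology. The only cosmetic differences are which specific result of \cite{Bartl2021Wasserstein} is cited for the $\hat\tau_N$-continuity of $v$ and your (welcome) explicit remark that both distances are pseudometrics vanishing exactly on $\sim_N$-equivalent pairs.
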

\begin{proof}
	The first statement follows from \cite[Theorem 1.2]{backhoff2020all}, which proved that the topology induced by adapted Wasserstein metrics $\mathcal{A}\mathcal{W}_p$ is  same as Aldous' extended weak topology $\hat \tau_1$ when filtrations are natural. The second claim follows from \cite[Lemmas 4.7, 4.10]{Bartl2021Wasserstein}, which implicitly showed that the topology for general filtered processes generated by $\mathcal{A}\mathcal{W}_p$ is same as the rank $N$ adapted topology $\hat \tau_N$ provided the discrete interval $I$ contains $N+1$ points and the state space $\mathcal{K}$ is compact, together with \cite[Theorem 4]{bonnier2020adapted} (which showed that the rank $N+1$ Signature MMD $\mathcal{D}^{N+1}_{\mathcal{S}}$ induces the topology $\hat \tau_N$). The last assertion follows from \cite[Example 4.5]{Bartl2021Wasserstein} (which proved that $v$ is continuous for the rank $N$ adapted topology $\hat \tau_N$) and the second claim above (the rank $N+1$ Signature MMD $\mathcal{D}^{N+1}_{\mathcal{S}}$ generate $\hat \tau_N$ when we have a compact state space).
\end{proof}
Thanks to the above results, our kernel learning approach (Theorem \ref{thm: main theorem 3}) remains valid for solving OSP for filtered processes with general filtration: one only need to replace the rank $2$ Signature MMD $\mathcal{D}_{\mathcal{S}}^2$ by the rank $N+1$ Signautre MMD $\mathcal{D}_{\mathcal{S}}^{r+1}$, provided the time interval $I$ contains $N+1$ points.
\begin{corollary}\label{cor: kernel learning of OSP for general case}
	Let $I = \{0 = t_0 < t_1 < \ldots < t_N = T\}$ and $\sigma > 0$ be fixed. We define a kernel $K^{N+1}_{\mathcal{S}}: \mathcal{F}\mathcal{P}_I(\mathcal{K}) \times \mathcal{F}\mathcal{P}_I(\mathcal{K}) \to \mathbb{R}$ via
	$$
	K^{N+1}_{\mathcal{S}}(\mathbb{X}, \mathbb{Y}) := \exp(-\sigma^2 \mathcal{D}_{\mathcal{S}}^{N+1}(\mathbb{X},\mathbb{Y})^2).
	$$
	Then the RKHS generated by $K^{N+1}_{\mathcal{S}}$ is dense in the space $C_b(\mathcal{F}\mathcal{P}_I(\mathcal{K}), \hat \tau_N)$ with respect to the uniform topology\footnote{In the contrast to the noncompactness of $\Lambda_{\text{plain}}(\mathcal{K})$, the set $\mathcal{F}\mathcal{P}_I(\mathcal{K})$ is compact for $\hat \tau_N$ for any $N$, see \cite[Theorem 1.7]{Bartl2021Wasserstein}}. In particular, if $v(\cdot)$ denotes the cost function in OSP, then for any $\varepsilon > 0$ and $\mathbb{X} \in \Lambda_{\text{plain}}(\mathcal{K})$, there exist an $M \in \mathbb{N}$, $\mathbb{Y}^i \in \Lambda_{\text{plain}}(\mathcal{K})$ and scalars $a_i \in \mathbb{R}$, $i=1,\ldots,M$ such that 
	$$
	\Big|v(\mathbb{X}) - \sum_{i=1}^M a_i \exp(-\sigma^2 \mathcal{D}^{N+1}_{\mathcal{S}}(\mathbb{X}, \mathbb{Y}^i)^2)\Big| \le \varepsilon.
	$$
\end{corollary}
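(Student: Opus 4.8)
The plan is to re-run the argument behind Theorem~\ref{thm: main theorem 3}, replacing the rank~$2$ signature $S^2$ by the rank~$N+1$ signature $S^{N+1}$ and letting the whole space $\mathcal{F}\mathcal{P}_I(\mathcal{K})$ play the role previously played by $\Lambda_{\text{plain}}(\mathcal{K})$. First I would assemble three ingredients. By Theorem~\ref{thm: general MMD}, $\mathcal{D}^{N+1}_{\mathcal{S}}$ metrizes the rank~$N$ adapted topology $\hat\tau_N$ on $\mathcal{F}\mathcal{P}_I(\mathcal{K})$, with $\mathcal{D}^{N+1}_{\mathcal{S}}(\mathbb{X},\mathbb{Y})=\|\Phi^{N+1}(\mathbb{X})-\Phi^{N+1}(\mathbb{Y})\|_{\mathcal{H}^{N+1}}$ for the feature map $\Phi^{N+1}:\mathbb{X}\mapsto\mathbb{E}_{\mathbb{P}}[S^{N+1}(\hat X^N)]$, which by the same theorem is injective modulo $\sim_N$. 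By \cite[Theorem~1.7]{Bartl2021Wasserstein}, $(\mathcal{F}\mathcal{P}_I(\mathcal{K}),\hat\tau_N)$ is compact. By Theorem~\ref{thm: equivalence of adapted wasserstein metric and higher rank MMD}(3), the value function $v$ is continuous for $\mathcal{D}^{N+1}_{\mathcal{S}}$, hence $\hat\tau_N$-continuous; it is bounded since $\gamma$ is bounded, so $v\in C_b(\mathcal{F}\mathcal{P}_I(\mathcal{K}),\hat\tau_N)$.

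Next I would reduce the density claim to the universality of a Gaussian kernel on a compact subset of a Hilbert space. As $\hat\tau_N$ does not separate $\sim_N$-equivalent processes, $C_b(\mathcal{F}\mathcal{P}_I(\mathcal{K}),\hat\tau_N)$ is canonically identified with $C\big(\mathcal{F}\mathcal{P}_I(\mathcal{K})/{\sim_N}\big)$, and $K^{N+1}_{\mathcal{S}}$ factors through $\mathcal{F}\mathcal{P}_I(\mathcal{K})/{\sim_N}$. By the first ingredient, $\Phi^{N+1}$ descends to an isometric embedding of the compact metric space $\big(\mathcal{F}\mathcal{P}_I(\mathcal{K})/{\sim_N},\mathcal{D}^{N+1}_{\mathcal{S}}\big)$ into $\mathcal{H}^{N+1}$, with compact image $\mathcal{C}$; and $K^{N+1}_{\mathcal{S}}$ is the pullback along $\Phi^{N+1}$ of the Gaussian kernel $K_\sigma(h,h')=\exp(-\sigma^2\|h-h'\|^2_{\mathcal{H}^{N+1}})$ on $\mathcal{C}$, which is positive definite by Schoenberg's theorem (the squared Hilbert distance being conditionally negative definite). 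Precomposition with $\Phi^{N+1}$ identifies $\mathrm{RKHS}(K_\sigma)$ with $\mathrm{RKHS}(K^{N+1}_{\mathcal{S}})$ and $C(\mathcal{C})$ with $C\big(\mathcal{F}\mathcal{P}_I(\mathcal{K})/{\sim_N}\big)$, both sup-norm isometrically, so it remains to show $\mathrm{RKHS}(K_\sigma)$ is uniformly dense in $C(\mathcal{C})$. I would do this as in the proof of Theorem~\ref{thm: main theorem 3}: expand $K_\sigma(h,h')=\exp(-\sigma^2\|h\|^2)\exp(-\sigma^2\|h'\|^2)\sum_{n\ge0}\tfrac{(2\sigma^2)^n}{n!}\langle h,h'\rangle^n$; deduce that $\mathrm{RKHS}(K_\sigma)$ contains $\exp(-\sigma^2\|\cdot\|^2)$ times every polynomial in the linear functionals $h\mapsto\langle h,h'\rangle$; observe that those polynomials form a point-separating unital subalgebra of $C(\mathcal{C})$ (point-separating precisely because $\Phi^{N+1}$ is injective), hence are uniformly dense by Stone--Weierstrass; and conclude by multiplying through by the fixed nowhere-vanishing continuous factor $\exp(-\sigma^2\|\cdot\|^2)$. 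Because $\mathcal{F}\mathcal{P}_I(\mathcal{K})$ is $\hat\tau_N$-compact, strict and uniform topologies on $C_b(\mathcal{F}\mathcal{P}_I(\mathcal{K}),\hat\tau_N)$ agree, so --- unlike for the non-compact $\Lambda_{\text{plain}}(\mathcal{K})$ in Theorem~\ref{thm: main theorem 3} --- no strict-topology detour is needed.

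For the ``in particular'' assertion I would feed $v\in C_b(\mathcal{F}\mathcal{P}_I(\mathcal{K}),\hat\tau_N)$ into the density just proved: for any $\varepsilon>0$ there are finitely many $\mathbb{Z}^i\in\mathcal{F}\mathcal{P}_I(\mathcal{K})$ and scalars $a_i$ with $\sup_{\mathbb{W}}\big|v(\mathbb{W})-\sum_i a_i\exp(-\sigma^2\mathcal{D}^{N+1}_{\mathcal{S}}(\mathbb{W},\mathbb{Z}^i)^2)\big|\le\varepsilon/2$. To take the centres in $\Lambda_{\text{plain}}(\mathcal{K})$ as stated, I would replace each $\mathbb{Z}^i$ by a $\hat\tau_N$-close $\mathbb{Y}^i\in\Lambda_{\text{plain}}(\mathcal{K})$ --- using the $\hat\tau_N$-density of $\Lambda_{\text{plain}}(\mathcal{K})$ in $\mathcal{F}\mathcal{P}_I(\mathcal{K})$, obtained by realising a general filtration as the natural filtration of a mildly enriched path --- and control the perturbation via the joint $\hat\tau_N$-continuity of $K^{N+1}_{\mathcal{S}}$ on the compact product; evaluating at $\mathbb{X}$ then gives the claimed estimate, in fact uniformly over all of $\mathcal{F}\mathcal{P}_I(\mathcal{K})$.

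The step I expect to be the main obstacle is the universality of $K_\sigma$ on compact subsets of the infinite-dimensional Hilbert space $\mathcal{H}^{N+1}$ (rather than of $\mathbb{R}^d$): one has to verify both that $K_\sigma$ remains positive definite there and that its RKHS is sup-norm dense in $C(\mathcal{C})$, for which the series-expansion-plus-Stone--Weierstrass route above is the clean argument, since the finite-dimensional RBF universality theorems do not apply verbatim; this is exactly the technical heart already present in the proof of Theorem~\ref{thm: main theorem 3}. A lesser point is the final descent to centres in $\Lambda_{\text{plain}}(\mathcal{K})$, which rests on the $\hat\tau_N$-density of $\Lambda_{\text{plain}}(\mathcal{K})$ mentioned above.
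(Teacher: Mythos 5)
Your proposal is correct and follows essentially the same route as the paper, which obtains the corollary by rerunning the proof of Theorem \ref{thm: main theorem 3} with $S^{N+1}$ in place of $S^2$ on the $\hat\tau_N$-compact space $\mathcal{F}\mathcal{P}_I(\mathcal{K})$ (so that, as you note, the strict topology reduces to the uniform one); the only difference is that you inline the Taylor-expansion-plus-Stone--Weierstrass argument for universality of the Gaussian kernel on a compact subset of $\mathcal{H}^{N+1}$, where the paper instead cites \cite[Thm.\ 2.2]{christmann2010universal} together with the normalised feature map $\alpha\Phi$. You are in fact more careful than the paper on one point: the placement of the centres $\mathbb{Y}^i$ in $\Lambda_{\text{plain}}(\mathcal{K})$ genuinely requires the $\hat\tau_N$-density of $\Lambda_{\text{plain}}(\mathcal{K})$ in $\mathcal{F}\mathcal{P}_I(\mathcal{K})$ (available from \cite{Bartl2021Wasserstein}), a step the paper passes over in silence.
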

Now, invoking Remark \ref{remark: generalization of CKME to higher order}, the above observation allows us to generalize our algorithm in Section \ref{sect: Regression algorithm} to all filtered processes with general filtration, as long as their filtrations are generated by some observable processes. More precisely, suppose $\mathbb{X} = (\Omega, \mathcal{F}, \mathbb{F}, \mathbb{P}, X)$ is an element in $\mathcal{F}\mathcal{P}_I(\mathcal{K})$ such that the filtration $\mathbb{F}$ is generated by some stochastic process $\xi = (\xi_t)_{t \in I}$, then we can construct an empirical estimator $\widehat{F_{t,K_S}}((\bar \xi^i)_{i \le M}, \cdot)$ for $\mathbb{E}_{\mathbb{P}}[k_S(X, \cdot)| \sigma(\xi_s, s\le t)]$ by sampling $M$ realizations $(\bar \xi^i)_{i \le M}$ from the law of $\xi$. Using this estimator we can construct the corresponding empirical distance $\widehat{\mathcal{D}_{\mathcal{S}}^{N+1}}$ for $\mathcal{D}^{N+1}_{\mathcal{S}}$ which in turn allows us to perform Distribution Regression for value functions of OSP defined on $\mathcal{F}\mathcal{P}_I(\mathcal{K})$ as before.
\section{Space--Time Discretization}\label{subsect: discretize}
In this Appendix we briefly discuss the convergence of the value functions in OSP from discrete time markets to continuous time market. The result obtained here may confirm, at least theoretically, that one can approximate the value functions in OSP for continuous time models via space--time discretization.\\ 
	Fix a time interval $[0,T]$, we consider the value function of Optimal Stopping Problem for  filtered processes defined on $[0,T]$ with continuous trajectories related to their natural filtration:
	$$
	v(\mathbb{X}) = \sup_{\tau \in \mathcal{T}_{\mathbb{X}}} \mathbb{E}_{\mathbb{P}}[\gamma(X_\tau)].
	$$
	Here we do not put any other additional assumptions on $\mathbb{X}$: $\mathbb{X}$ can be continuous semimartingales, fractional Brownian motions, and whatsoever. \\
	Now we discretize the market, both in space and in time, in the following simple way: for a partition $\mathcal{P}_n = \{0= t^n_0 < t^n_1 < \ldots < t^n_{N_n} = T\}$, define
	$$
	X^n := \sum_{k=0}^{N_n-1} (X^n_{t^k_n} \wedge n) \vee (-n)1_{[t^n_k,t^n_{k+1})} + (X^n_T \wedge n) \vee (-n)1_{\{T\}},
	$$
	and endow it with its natural filtration $\mathcal{F}^n_t = \sigma(X^n_s:s \le t)$. As a  simple consequence of \cite[Theorem 5]{Coquet2007converge} we can show that $v(\mathbb{X}^n) \to v(\mathbb{X})$ for $\mathbb{X}^n = (\Omega, \mathcal{F}^n_T, \mathbb{F}^n = (\mathcal{F}^n_t), X^n,\mathbb{P})$ as long as the mesh size of $\mathcal{P}_n$ tends to $0$.
	\begin{lemma}\label{lemma: convergence of discrete market to continuous time market}
		For any continuous time filtered process $\mathbb{X}$ with continuous trajectories and natural filtration , it holds that $v(\mathbb{X}^n) \to v(\mathbb{X})$ along any sequence of partitions $(\mathcal{P}_n)_{n \ge 1}$ with $|\mathcal{P}_n| \to 0$.
	\end{lemma}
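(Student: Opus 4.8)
\emph{Proof strategy.} The plan is to reformulate both sides as expected values of Snell envelopes and then apply the stability result \cite[Theorem 5]{Coquet2007converge}. Set $H_t := \gamma(X_t)$ and $H^n_t := \gamma(X^n_t)$, write $S$ for the Snell envelope of $H$ in $\mathbb{F}$ and $S^n$ for that of $H^n$ in $\mathbb{F}^n$, so that $v(\mathbb{X}) = \mathbb{E}[S_0]$ and $v(\mathbb{X}^n) = \mathbb{E}[S^n_0]$, with $|S_0|, |S^n_0| \le \|\gamma\|_\infty$. One half of the claim is elementary: since $\mathcal{F}^n_t \subseteq \mathcal{F}_t$ for every $t$, any $\mathbb{F}^n$-stopping time $\tau^n$ (which may be taken valued in $\mathcal{P}_n$) is also an $\mathbb{F}$-stopping time and $X^n_{\tau^n} = \Pi_n(X_{\tau^n})$ with $\Pi_n$ the truncation to $[-n,n]^d$, whence $v(\mathbb{X}^n) \le v(\mathbb{X}) + 2\|\gamma\|_\infty\, \mathbb{P}\big(\sup_{t\le T}\|X_t\| > n\big)$ and so $\limsup_n v(\mathbb{X}^n) \le v(\mathbb{X})$ because $\sup_{t\le T}\|X_t\|$ is a.s.\ finite. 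The substance of the lemma is therefore the reverse inequality, which I would obtain from \cite[Theorem 5]{Coquet2007converge}; this is where both $|\mathcal{P}_n| \to 0$ and the convergence of filtrations enter.

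First I would verify the convergence of the obstacles. Almost every trajectory of $X$ is uniformly continuous on $[0,T]$, so $t \mapsto X_{t^n_k}$ for $t \in [t^n_k, t^n_{k+1})$ converges to $X$ uniformly on $[0,T]$ as $|\mathcal{P}_n| \to 0$; moreover $R := \sup_{t \le T}\|X_t\|$ is a.s.\ finite, so for $n \ge R$ the truncation $\Pi_n$ is inactive along that path and hence $\sup_{t \le T}\|X^n_t - X_t\| \to 0$ a.s. Continuity and boundedness of $\gamma$ upgrade this to $\sup_{t\le T}|H^n_t - H_t| \to 0$ in probability (and in $L^1$), which is the convergence-of-obstacles hypothesis of \cite[Theorem 5]{Coquet2007converge}.

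The second, and decisive, input is the weak convergence of filtrations $\mathbb{F}^n \to \mathbb{F}$. This is the delicate point, and precisely the phenomenon that motivates the adapted topologies of Section \ref{sect: preliminaries}: convergence of the processes $X^n \to X$ does not in itself force convergence of the filtrations they generate. Here it nonetheless holds because $X^n$ is a deterministic contraction of $X$: one has $\mathcal{F}^n_t = \sigma\big( \Pi_n(X_{t^n_k}) : t^n_k \le t \big) \subseteq \mathcal{F}_t$, while as $n \to \infty$ the partition points become dense in $[0,T]$ and $\Pi_n \to \mathrm{id}$, so the generators of $\mathcal{F}^n_t$ exhaust $\mathcal{F}_t$. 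Invoking the standard Coquet--M\'emin--S\l omi\'nski criterion, it then suffices to check $\mathbb{E}[\xi \mid \mathcal{F}^n_t] \to \mathbb{E}[\xi \mid \mathcal{F}_t]$ for $\xi$ in a measure-determining family, e.g.\ $\xi = f(X_{s_1}, \ldots, X_{s_m})$ with $f$ bounded continuous; this follows from path continuity of $X$ together with a routine closability / martingale-convergence argument. With both inputs in place, \cite[Theorem 5]{Coquet2007converge} gives $S^n_0 \to S_0$ in probability, and since $|S^n_0| \le \|\gamma\|_\infty$ dominated convergence yields $\mathbb{E}[S^n_0] \to \mathbb{E}[S_0]$, i.e.\ $v(\mathbb{X}^n) \to v(\mathbb{X})$.

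The step I expect to be the main obstacle is the filtration-convergence argument: one must confirm that the concrete truncate-and-sample scheme above meets the precise hypotheses of \cite{Coquet2007converge} (in particular that continuity of paths, with no further regularity, is enough) and that their optimal-stopping stability theorem applies to a general --- possibly non-Markovian and non-semimartingale --- continuous $X$. The remaining steps are either elementary (the $\limsup$ bound) or routine verifications (the obstacle convergence, the dominated-convergence passage).
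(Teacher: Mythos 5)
Your proof follows essentially the same route as the paper's: both reduce the lemma to the stability theorem \cite[Theorem 5]{Coquet2007converge} after observing that continuity of the trajectories gives $\sup_{t\le T}\|X^n_t - X_t\|\to 0$ almost surely (the truncation being eventually inactive along each path). The difference is in which hypotheses of that theorem you verify. The paper checks convergence of the processes together with Aldous' criterion \cite[(1)]{Coquet2007converge} and leans on the filtrations being natural; you instead check convergence of the obstacles $\sup_{t\le T}|H^n_t-H_t|\to 0$ and weak convergence of the natural filtrations $\mathbb{F}^n\to\mathbb{F}$, which you rightly single out as the delicate hypothesis (the paper passes over it in silence). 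Your filtration step is only sketched, and one caveat is worth recording: the $\sigma$-fields $\mathcal{F}^n_t$ need not be nested in $n$ unless the partitions are refining, so a plain martingale-convergence argument along increasing $\sigma$-fields does not suffice by itself; what saves you is the inclusion $\mathcal{F}^n_t\subseteq\mathcal{F}_t$ combined with $X^n\to X$ in probability, which is exactly the Coquet--M\'emin--S\l omi\'nski criterion for weak convergence of generated filtrations. Your additional elementary half, $\limsup_n v(\mathbb{X}^n)\le v(\mathbb{X})$ via $\mathcal{F}^n_t\subseteq\mathcal{F}_t$ and the cost of truncation, is correct and a useful sanity check, though it becomes redundant once the cited theorem applies. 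There is no genuine gap beyond what the paper itself delegates to the reference.
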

	\begin{proof}
		Let $\mathbb{X}$ be a given continuous filtered process with the natural filtration. We fix a sequence of partitions of $[0,T]$ such that their mesh size $|\mathcal{P}_n| \to 0$ as $n \to \infty$. We denote $\mathcal{P}_n := \{0=t^n_0 < t^n_1<\ldots <t^n_{N_n} = T\}$.\\
		Now for every $n$ we define the localized discrete market $\mathbb{X}^n = (\Omega, \mathcal{F}^n_T, \mathbb{F}^n = (\mathcal{F}^n_t)_{t \in [0,T]}, X^n, \mathbb{P})$ with
		$$
		X^n = \sum_{k=0}^{N_n - 1}(X^n_{t^n_k} \wedge n)\vee (-n)1_{[t^n_k,t^n_{k+1})} + (X^n_{T} \wedge n)\vee (-n)1_{\{T\}}
		$$
		and $\mathcal{F}^n_t = \sigma(X^n_s: s\le t)$, $t \in [0, T]$ the natural filtration of $X^n$. Furthermore, let $\mathcal{T}$ be the set of all $\mathbb{F}$--stopping times.\\
		Since $X$ is continuous, we indeed have $\sup_{t \in [0,T]}|X^n_t - X_t| \to 0$ almost surely as $n \to \infty$, which further implies that 
		$$
		\lim_{\delta \to 0}\limsup_{n \to \infty} \sup_{\tau,\sigma \in \mathcal{T}, \sigma \le \tau \le \sigma + \delta} |X^n_\sigma - X^n_\tau| = 0
		$$
		almost surely, and therefore the Aldous' condition \cite[(1)]{Coquet2007converge} is satisfied for $(\mathbb{X}^n)_{n \in \mathbb{N}}$. Hence, by \cite[Theorem 5]{Coquet2007converge} we have $v(\mathbb{X}^n) \to v(\mathbb{X})$ as $n \to \infty$. In other words, for any $\varepsilon >0$ there exists an $N = N(\varepsilon)$ such that for all $n \ge N$ it holds that $$
		|v(\mathbb{X}^n) - v(\mathbb{X})| \le \varepsilon.
		$$
	\end{proof}
	Certainly the above result is purely theoretical and not quantitative. In practice, the choice of time--discretization may influence the computations cost and convergence speed crucially. It will be interesting to give a quantitative estimates for the convergence rate of value functions in terms of the mesh sizes of time--partitions and we leave it for a future work.

\section{Proofs}\label{appendix: Proof}
\begin{proof}[Proof of Theorem \ref{thm: main theorem 1}]
	Let us start with the case $r=0$. Since $X$ and $Y$ are (discrete) paths taking values in $\mathcal{K}$, the signature map $S$ is continuous (see e.g. \eqref{eq: continuity of signature}, the $\mathcal{H}^1$--valued Bochner integrals $\mathbb{E}_{\mathbb{P}}[S(X)]$ and $\mathbb{E}_{\mathbb{Q}}[S(Y)]$ are well--defined. Then using the algebraic property of signature $S$, by \eqref{eq: point separating algebra} we know that the class $\{\langle \ell, S(x)\rangle_{\mathcal{H}^1}: \ell \in T(\mathbb{R}^{d+1})\}$ is a point--separating algebra in $C(\mathcal{K}^I,\mathbb{R})$. So by a standard Stone--Weierstrass argument we obtain that 
	\begin{align*}
		\mathbb{P}_X = \mathbb{Q}_Y &\iff \mathbb{E}_{\mathbb{P}}[\langle \ell, S(X) \rangle_{\mathcal{H}^1}] = \mathbb{E}_{\mathbb{Q}}[\langle \ell, S(Y) \rangle_{\mathcal{H}^1}], \quad \forall \ell \in T(\mathbb{R}^{d+1})\\
		&\iff \mathbb{E}_{\mathbb{P}}[S(X)] = \mathbb{E}_{\mathbb{Q}}[S(Y)].
	\end{align*}
	For the case $r = 1$, we first apply the above result to every $\hat X^1_t = \mathbb{P}(X \in \cdot|\mathcal{F}_t)$, $t \in I$ to see that $\hat X^1 \mapsto (\mathbb{E}_{\mathbb{P}}[S(X)|\mathcal{F}_t])_{t \in I}$ is injective, that is, we can consider the $\mathcal{H}^1$--valued path $(\mathbb{E}_{\mathbb{P}}[S(X)|\mathcal{F}_t])_{t \in I}$ instead of the measure--valued path $\hat X^1$. Then, using \eqref{eq: point separating algebra} to signatures of paths evolving in the space $\mathcal{H}^1$ (i.e., $(\mathbb{E}_{\mathbb{P}}[S(X)|\mathcal{F}_t])_{t \in I}$) and the Stone--Weierstrass theorem again one can obtain that
	\begin{align*}
		\mathbb{X} \sim_1 \mathbb{Y} &\iff \mathcal{L}(\hat X^1) = \mathcal{L}(\hat Y^1) \\
		&\iff \mathbb{E}_{\mathbb{P}}[\langle \ell, S^2(\hat X^1) \rangle_{\mathcal{H}^2}] = \mathbb{E}_{\mathbb{Q}}[\langle \ell, S^2(\hat Y^1) \rangle_{\mathcal{H}^2}], \quad \forall \ell \in T(\mathbb{R}\oplus \mathcal{H}^1)\\
		&\iff \mathbb{E}_{\mathbb{P}}[S^2(\hat X^1)] = \mathbb{E}_{\mathbb{Q}}[S^2(\hat Y^1)],
	\end{align*}
	as $S^2(\hat X^1) = S_{\mathcal{H}^1}(t \mapsto \mathbb{E}_{\mathbb{P}}[S(X)|\mathcal{F}_t])$.
\end{proof}

\begin{proof}[Proof of Theorem \ref{thm: main theorem 2}]
	Since $\mathcal{K}$ is compact, the discrete path space $\mathcal{K}^I$ is compact, and therefore by \eqref{eq: point separating algebra} and the Stone--Weierstrass theorem every continuous function $f: \mathcal{K}^I \to \mathbb{R}$ can be uniformly approximated by $\langle \ell, S(x)\rangle_{\mathcal{H}^1}$ for $\ell \in T(\mathbb{R}^{d+1})$, and therefore
	\begin{align*}
		\mathbb{P}_{X^n} \to \mathbb{P}_X \text{ weakly} &\iff \mathbb{E}_{\mathbb{P}^n}[f(X^n)] \to \mathbb{E}_{\mathbb{P}}[f(X)] \quad \forall f \in C(\mathcal{K}^I, \mathbb{R}) \\
		&\iff \mathbb{E}_{\mathbb{P}^n}[\langle \ell, S(X^n) \rangle_{\mathcal{H}^1}] \to \mathbb{E}_{\mathbb{P}}[\langle \ell, S(X) \rangle_{\mathcal{H}^1}], \quad \forall \ell \in T(\mathbb{R}^{d+1})\\
		&\iff \mathcal{D}^1_{\mathcal{S}}(\mathbb{X}^n,\mathbb{X}) =  \|\mathbb{E}_{\mathbb{P}^n}[S(X^n)] \to \mathbb{E}_{\mathbb{P}}[S(X)]\|_{\mathcal{H}^1} \to 0.
	\end{align*}
	Similarly, since $\mathcal{P}(\mathcal{K}^I)$ is weakly compact and so is the product  space $\mathcal{P}(\mathcal{K}^I)^I$, as well as $\hat X^1 \in \mathcal{P}(\mathcal{K}^I)^I$, using \eqref{eq: point separating algebra} to $S^2$ as in the proof of Theorem \ref{thm: main theorem 1} we can again use the Stone--Weierstrass theorem to obtain
	\begin{align*}
		\mathcal{L}(\hat X^{n,1}) \to \mathcal{L}(\hat X) &\iff \mathbb{E}_{\mathbb{P}^n}[f(\hat X^{1,n})] \to \mathbb{E}_{\mathbb{P}}[f(\hat X^1)], \quad \forall f \in C(\mathcal{P}(\mathcal{K}^I)^I, \mathbb{R}) \\
		&\iff \mathbb{E}_{\mathbb{P}^n}[\langle \ell, S^2(X^{1,n}) \rangle_{\mathcal{H}^2}] \to \mathbb{E}_{\mathbb{P}}[\langle \ell, S^2(\hat X^1) \rangle_{\mathcal{H}^2}], \forall \ell \in T(\mathbb{R}\oplus \mathcal{H}^1)\\
		&\iff \mathcal{D}^2_{\mathcal{S}}(\mathbb{X}^n,\mathbb{X}) =  \|\mathbb{E}_{\mathbb{P}^n}[S^2(\hat X^{1,n})] \to \mathbb{E}_{\mathbb{P}}[S^2(\hat X^1)]\|_{\mathcal{H}^2} \to 0.
	\end{align*}
\end{proof}

\begin{proof}[Proof of Theorem \ref{thm: main theorem 3}]
	\begin{enumerate}
		\item By \cite[Thm. 2.2]{christmann2010universal} if $\mathfrak{K}$ is a compact metric space and $H$ is a separable Hilbert space such that there exists a continuous and injective map $\rho : \mathfrak{K} \to H$, then for any real analytic function $f:\mathbb{R} \to \mathbb{R}$  with strictly positive Taylor expansion coefficients such that its Taylor series converges globally,  the kernel $k: \mathfrak{K} \times \mathfrak{K} \to \mathbb{R}$ given by 
		\begin{equation*}
			k(z,z') = f(\langle \rho(z), \rho(z^\prime) \rangle_H)
		\end{equation*}
		is universal in the sense that its RKHS is dense in the space of continuous functions from $\mathfrak{K}$ to $\mathbb{R}$. \\
		Now, since $\mathcal{K}$ is compact, by \cite[Theorem 5.1, Lemma 4.7]{Bartl2021Wasserstein} we know that for any $r \ge 0$ the topological space $(\mathcal{F}\mathcal{P}_I(\mathcal{K}), \hat \tau_{r})$ is compact. Then, since $\mathbb{X} \mapsto \mathbb{E}_{\mathbb{P}}[S^2(\hat X)]$ is injective (see Theorem \ref{thm: main theorem 1}), the above result applied to $\mathfrak{K} = (\mathcal{F}\mathcal{P}_I(\mathcal{K}), \hat \tau_{1})$, $H = \mathcal{H}^{2}$ and $\rho = S^{2}$ yields that the RKHS induced by $K_f(\mathbb{X}, \mathbb{Y}) = f(\langle \mathbb{E}_{\mathbb{P}}[S^{2}(\hat X)], 
		\mathbb{E}_{\mathbb{Q}}[S^{2}(\hat Y)] \rangle_{\mathcal{H}^{2}})$ is dense in $C_b(\mathcal{F}\mathcal{P}_I(\mathcal{K}), \hat \tau_{1})$ for any $f$ satisfying the above mentioned conditions.
		
		\item Now let us consider the value function $v(\cdot)$ of OSP restricted on $\Lambda_{\text{plain}}(\mathcal{K})$, i.e., we only consider stopping times for the natural filtration. Without loss of generality we take $\sigma =1$. First of all, by \cite[Theorem 1.1]{backhoff2020all} we know that $v|_{\Lambda_{\text{plain}}(\mathcal{K})}$ is continuous for the rank $1$ adapted topology $\hat \tau_1$ on $\Lambda_{\text{plain}}(\mathcal{K})$, that is, $v|_{\Lambda_{\text{plain}}(\mathcal{K})}$ is an element in $C_b(\Lambda_{\text{plain}}(\mathcal{K}),\hat \tau_1)$. Now we take $f(x) = \exp(x)$, which is an analytic function on $\mathbb{R}$ with strictly positive Taylor expansion coefficients. In view of the result obtained in the last step we know that the RKHS induced by $K_{\exp}(\mathbb{X},\mathbb{Y}) = \exp \circ \langle \mathbb{E}_{\mathbb{P}}[S^2(\hat X^1)], \mathbb{E}_{\mathbb{Q}}[S^2(\hat Y^1)] \rangle_{\mathcal{H}_{2}}$ is dense in $C_b((\Lambda_{\text{plain}}(\mathcal{K}),\hat \tau_1), \mathbb{R})$ with respect to the strict topology. Here we only need to note that we are using Giles' Theorem (see \cite[Theorem 2.6]{chevyrev2018signature}) on the non--compact set $(\Lambda_{\text{plain}}(\mathcal{K}),\hat \tau_1)$ instead of the classical Stone--Weierstrass theoreom on compact sets. \\
		Furthermore, let $\alpha(\mathbb{X}) := K_{\exp}(\mathbb{X},\mathbb{X})^{-\frac{1}{2}}$ for $\mathbb{X} \in \Lambda_{\text{plain}}(\mathcal{K})$. In view of the proof of \cite[Theorem 4.55]{christmann2008SVM} we see that if $\Phi$ is the canonical feature map for $K_{\exp}$, then $\alpha \Phi$ is the canonical feature map for the kernel $K^2_{\mathcal{S}}(\mathbb{X}, \mathbb{Y}) := \exp(-\mathcal{D}^2_{\mathcal{S}}(\mathbb{X},\mathbb{Y})^2) = \exp(-\|\mathbb{E}_{\mathbb{P}}[S^2(\hat X^1)] - \mathbb{E}_{\mathbb{Q}}[S^2(\hat Y^1)]\|_{\mathcal{H}^2}^2)$. Here we note that since $(\Lambda_{\text{plain}}(\mathcal{K}),\hat \tau_1) \subset (\mathcal{F}\mathcal{P}_I(\mathcal{K}), \hat \tau_{1})$ and the latter set is compact, the continuity of $\alpha$ on $(\mathcal{F}\mathcal{P}_I(\mathcal{K}), \hat \tau_{1})$ implies that $c:=\sup_{\mathbb{X} \in \Lambda_{\text{plain}}(\mathcal{K})} |\alpha(\mathbb{X})| < \infty$. Since the RKHS induced by $K_{\exp}(\mathbb{X},\mathbb{Y}) = \exp \circ \langle \mathbb{E}_{\mathbb{P}}[S^2(\hat X^1)], \mathbb{E}_{\mathbb{Q}}[S^2(\hat Y^1)] \rangle_{\mathcal{H}_{2}}$ is dense in $C_b(\Lambda_{\text{plain}}(\mathcal{K}),\hat \tau_1)$ with respect to the strict topology, for each $g \in C_b((\Lambda_{\text{plain}}(\mathcal{K}),\hat \tau_1)$, each function $\phi$ vanishing at infinity and each $\varepsilon > 0$ there exists an $\ell \in \mathcal{H}_2$  such that 
		$$
		\sup_{\mathbb{X} \in \Lambda_{\text{plain}}(\mathcal{K})}\Big|\Big(\langle \ell, \Phi(\mathbb{X})\rangle_{\mathcal{H}_2} - \frac{g}{\alpha}\Big)\phi(\mathbb{X})\Big| < \varepsilon/c,
		$$
		due to the definition of the strict topology. This in turn implies that
		\begin{align*}
			\sup_{\mathbb{X} \in \Lambda_{\text{plain}}(\mathcal{K})}\Big|\Big(\langle \ell, \alpha(\mathbb{X})\Phi(\mathbb{X})\rangle_{\mathcal{H}_2} - &g(\mathbb{X})\Big)\phi(\mathbb{X})\Big| \le \sup_{\mathbb{X} \in \Lambda_{\text{plain}}(\mathcal{K})} |\alpha(\mathbb{X})| \\
			&\times \sup_{\mathbb{X} \in \Lambda_{\text{plain}}(\mathcal{K})}\Big|\Big(\langle \ell, \Phi(\mathbb{X})\rangle_{\mathcal{H}_2} - \frac{g}{\alpha}\Big)\phi(\mathbb{X})\Big| \\
			&\quad \quad \quad \quad \quad \le c \frac{\varepsilon}{c} = \varepsilon.
		\end{align*}
		Hence, $\alpha \Phi$ is a universal feature map for the strict topology and our claim follows immediately from the fact that $\alpha \Phi$ induces the kernel $\exp(-\mathcal{D}^2_{\mathcal{S}}(\cdot,\cdot)^2)$ together with \cite[Proposition E.3]{chevyrev2018signature}.
		
	\end{enumerate}
\end{proof}

\begin{proof}[Proof of Lemma \ref{lemma: MMD by RKHS}]
	\textit{Step 1}: Thanks to the reproducing property that 
	$$
	\langle k_{S_{\mathcal{H}_{\mathcal{S}}}}(\hat x, \cdot), k_{S_{\mathcal{H}_{\mathcal{S}}}}(\hat y,\cdot) \rangle_{\mathcal{H}_{\mathcal{S}}} = \langle S_{\mathcal{H}_{\mathcal{S}}}(\hat x), S_{\mathcal{H}_{\mathcal{S}}}( \hat y) \rangle_{\widetilde{\mathbb{R} \oplus \mathcal{H}_{\mathcal{S}}}},
	$$
	by the same argument as we deduced the equation for $\mathcal{D}^1_{\mathcal{S}}$, we obtain that
	\begin{align*}
		\|\mu^2_{\hat X} - \mu^2_{\hat Y}\|_{\mathcal{H}_{\mathcal{S}}^2}^2  &= \mathbb{E}_{X,X^\prime}[\langle S_{\mathcal{H}_{\mathcal{S}}}(\mu_{\hat X^1}), S_{\mathcal{H}_{\mathcal{S}}}(\mu_{\hat X^{\prime,1}}) \rangle_{\widetilde{\mathbb{R} \oplus \mathcal{H}_{\mathcal{S}}}}] \\
		& \quad + \mathbb{E}_{Y,Y^\prime}[\langle S_{\mathcal{H}_{\mathcal{S}}}(\mu_{\hat Y^1}), S_{\mathcal{H}_{\mathcal{S}}}(\mu_{\hat Y^{\prime,1}}) \rangle_{\widetilde{\mathbb{R} \oplus \mathcal{H}_{\mathcal{S}}}}] \\
		&\quad -2\mathbb{E}_{X,Y}[\langle S_{\mathcal{H}_{\mathcal{S}}}(\mu_{\hat X^1}), S_{\mathcal{H}_{\mathcal{S}}}(\mu_{\hat Y^1}) \rangle_{\widetilde{\mathbb{R} \oplus \mathcal{H}_{\mathcal{S}}}}],
	\end{align*}
	where $\mathbb{E}_{X,X^\prime}$ denotes the expectation taken with respect to the product measure $\mathbb{P} \otimes \mathbb{P}$ defined on the filtered probability space $(\Omega \times \Omega, \mathbb{F} \otimes \mathbb{F})$ with $X$ and $X^\prime$ being the first and second coordinate process respectively, and the notations $\mathbb{E}_{Y,Y^\prime}$, $\mathbb{E}_{X,Y}$ and $\mu_{\hat X^{\prime,1}}$ are then self--explanatory.\\
	On the other hand, using exactly the same calculation, we also have 
	\begin{align*}
		\mathcal{D}_{\mathcal{S}}^2(\mathbb{X},  \mathbb{Y})^2 &= 
		\mathbb{E}_{X,X^\prime}[\langle S^2(\hat X^1), S^2(\hat X^{\prime,1})\rangle_{\mathcal{H}^2}] \\
		&\quad + \mathbb{E}_{Y,Y^\prime}[\langle S^2(\hat Y^1), S^2(\hat Y^{\prime,1}) \rangle_{\mathcal{H}^2}]\\
		& \quad -2\mathbb{E}_{X,Y}[\langle S^2(\hat X^1), S^2(\hat Y^1)\rangle_{\mathcal{H}^2}].
	\end{align*}
	Hence, it suffices to prove that
	$$\mathbb{E}_{X,X^\prime}[\langle S_{\mathcal{H}_{\mathcal{S}}}(\mu_{\hat X^1}), S_{\mathcal{H}_{\mathcal{S}}}(\mu_{\hat X^{\prime,1}}) \rangle_{\widetilde{\mathbb{R} \oplus \mathcal{H}_{\mathcal{S}}}}] = \mathbb{E}_{X,X^\prime}[\langle S^2(\hat X^1), S^2(\hat X^{\prime,1})\rangle_{\mathcal{H}^2}],
	$$
	$\mathbb{E}_{Y,Y^\prime}[\langle S_{\mathcal{H}_{\mathcal{S}}}(\mu_{\hat Y^1}), S_{\mathcal{H}_{\mathcal{S}}}(\mu_{\hat Y^{\prime,1}}) \rangle_{\widetilde{\mathbb{R} \oplus \mathcal{H}_{\mathcal{S}}}}] = \mathbb{E}_{Y,Y^\prime}[\langle S^2(\hat Y^1), S^2(\hat Y^{\prime,1}) \rangle_{\mathcal{H}^2}]$ and
	$$
	\mathbb{E}_{X,Y}[\langle S_{\mathcal{H}_{\mathcal{S}}}(\mu_{\hat X^1}), S_{\mathcal{H}_{\mathcal{S}}}(\mu_{\hat Y^1}) \rangle_{\widetilde{\mathbb{R} \oplus \mathcal{H}_{\mathcal{S}}}}] = \mathbb{E}_{X,Y}[\langle S^2(\hat X^1), S^2(\hat Y^1)\rangle_{\mathcal{H}^2}].
	$$
	\textit{Step 2}: Let us verify that $$\mathbb{E}_{X,Y}[\langle S_{\mathcal{H}_{\mathcal{S}}}(\mu_{\hat X^1}), S_{\mathcal{H}_{\mathcal{S}}}(\mu_{\hat Y^1}) \rangle_{\widetilde{\mathbb{R} \oplus \mathcal{H}_{\mathcal{S}}}}] = \mathbb{E}_{X,Y}[\langle S^2(\hat X^1), S^2(\hat Y^1)\rangle_{\mathcal{H}^2}].
	$$
	Then the other two equations can be shown analogously. 
	To do this, we first note that the signatures of paths on the discrete time interval $I$ (see Definition \ref{def: signature rank 0}) is equal to  the continuous path signatures (given by iterated integrals) of their \textit{linear interpolation}  on $[0,T]$, see Remark \ref{remark: continuous time signature}. From the latter  perspective and \cite[Theorem 2.5]{cass2020computing} we know that for any $\omega, \omega^\prime$, the inner product of two signatures $\langle S_{\mathcal{H}_{\mathcal{S}}}(\mu_{\hat X^1})(\omega), S_{\mathcal{H}_{\mathcal{S}}}(\mu_{\hat Y^1})(\omega^\prime) \rangle_{\widetilde{\mathbb{R} \oplus \mathcal{H}_{\mathcal{S}}}}$ is the unique solution $u(T,T)$ to the Goursat type PDE (with a given boundary condition)
	\begin{equation}\label{eq: first Goursat pde}
		\frac{\partial^2 u}{\partial t\partial s}(t,s) = \langle (1,\frac{\partial}{\partial t}\mu_{\hat X^1_t}(\omega)), (1,\frac{\partial}{\partial s}\mu_{\hat Y^1_s}(\omega^\prime)) \rangle_{\mathbb{R} \oplus \mathcal{H}_{\mathcal{S}}}u(t,s).
	\end{equation}
	On the other hand, recalling the construction of $S^2$ (see Definition \ref{def: rank 2 sig}), namely 
	$$
	S^2(\hat X^1) = S_{\mathcal{H}^1}( t \mapsto \mathbb{E}_{\mathbb{P}}[S(X)|\mathcal{F}_t]),
	$$
	using the same argument we also have $\langle S^2(\hat X^1)(\omega), S^2(\hat Y^1)(\omega^\prime)\rangle_{\mathcal{H}^2}$ is the unique solution $w(T,T)$ to the Goursat PDE (with the same boundary condition as above)
	\begin{equation}\label{eq; second Goursat pde}
		\frac{\partial^2 w}{\partial t\partial s}(t,s) = \langle (1,\frac{\partial}{\partial t}\mathbb{E}_{\mathbb{P}}[S(X)|\mathcal{F}_t](\omega)), (1,\frac{\partial}{\partial s}\mathbb{E}_{\mathbb{Q}}[S(Y)|\mathcal{G}_s](\omega^\prime)) \rangle_{\mathbb{R} \oplus \mathcal{H}^1}w(t,s).
	\end{equation}
	Since these paths are piecewise linear on fixed time interval, that is, for all $t \in (t_{i-1},t_i)$ it holds that
	$$
	\frac{\partial}{\partial t}\mu_{\hat X^1_t}(\omega) = \frac{\mu_{\hat X^1_{t_i}}(\omega) - \mu_{\hat X^1_{t_{i-1}}}(\omega)}{t_i - t_{i-1}}
	$$
	and similarly for $\frac{\partial}{\partial t}\mathbb{E}_{\mathbb{P}}[S(X)|\mathcal{F}_t](\omega)$. Based on these observations, we see that if we can prove that for all $s,t \in I$
	\begin{equation}\label{eq: two inner products are same}
		\langle \mu_{\hat X^1_t}(\omega),      \mu_{\hat Y^1_s}(\omega^\prime) \rangle_{\mathcal{H}_{\mathcal{S}}} = \langle \mathbb{E}_{\mathbb{P}}[S(X)|\mathcal{F}_t](\omega), \mathbb{E}_{\mathbb{Q}}[S(Y)|\mathcal{G}_s](\omega^\prime) \rangle_{\mathcal{H}^1},
	\end{equation}
	then $u$ in \eqref{eq: first Goursat pde} and $w$ in \eqref{eq; second Goursat pde} solve the same PDE, and so by the uniqueness of the solution to Goursat type PDE we can conclude that
	$$
	\langle S^2(\hat X^1)(\omega), S^2(\hat Y^1)(\omega^\prime)\rangle_{\mathcal{H}^2} = \langle S_{\mathcal{H}_{\mathcal{S}}}(\mu_{\hat X^1})(\omega), S_{\mathcal{H}_{\mathcal{S}}}(\mu_{\hat Y^1})(\omega^\prime) \rangle_{\widetilde{\mathbb{R} \oplus \mathcal{H}_{\mathcal{S}}}}
	$$
	which in turn implies that their expectations are same, and finishes our proof.\\
	\textit{Step 3}: Finally, let us check \eqref{eq: two inner products are same}. Since $$\mu_{\hat X^1_t}(\omega) = \mathbb{E}_{\mathbb{P}}[k_S(X, \cdot)|\mathcal{F}_t](\omega) = \int k_S(x, \cdot) \hat X^1_t(\omega)(dx)
	$$ 
	and $\mu_{\hat Y^1_s}(\omega^\prime) = \mathbb{E}_{\mathbb{Q}}[k_S(Y, \cdot)|\mathcal{G}_s](\omega^\prime) = \int k_S(y, \cdot) \hat Y^1_s(\omega^\prime)(dy)$,
	we indeed obtain that 
	$$
	\langle \mu_{\hat X^1_t}(\omega),      \mu_{\hat Y^1_s}(\omega^\prime) \rangle_{\mathcal{H}_{\mathcal{S}}} = \int \langle k_S(x, \cdot), k_S(y, \cdot)\rangle_{\mathcal{H}_{\mathcal{S}}} \hat X^1_t(\omega)(dx) \otimes \hat Y^1_s(\omega^\prime)(dy).
	$$
	However, in view of \eqref{eq: reproducing property} the above equation is equal to
	$$
	\int \langle S(x), S(y)\rangle_{\mathcal{H}^1} \hat X^1_t(\omega)(dx) \otimes \hat Y^1_s(\omega^\prime)(dy),
	$$
	which is nothing else but $\langle \mathbb{E}_{\mathbb{P}}[S(X)|\mathcal{F}_t](\omega), \mathbb{E}_{\mathbb{Q}}[S(Y)|\mathcal{G}_s](\omega^\prime) \rangle_{\mathcal{H}^1}$ (recall that $\hat X^1_t = \mathbb{P}(X \in \cdot|\mathcal{F}_t)$ and $\hat Y^1_s = \mathbb{Q}(Y \in \cdot|\mathcal{G}_s)$).
\end{proof}

\begin{proof}[Proof of Proposition \ref{prop: main result}]
	\begin{enumerate}
		\item  Using Theorem \ref{thm: main theorem 3} we deduce that there exist $\mathbb{Y}^{i} \in \Lambda_{\text{plain}}(\mathcal{K})$, $i \le M$, scalars $a_i, i \le M$ such that
		$$
		\Big|v(\mathbb{X}) - \sum_{i=1}^M a_i K^2_{\mathcal{S}}(\mathbb{X},\mathbb{Y}^{i})\Big|\le \varepsilon.
		$$
		where $K^2_{\mathcal{S}}(\cdot,\cdot) = \exp(-\sigma^2 \mathcal{D}^{2,n}_{\mathcal{S}}(\cdot,\cdot))$. In Section \ref{sect: estimation of CKME} we have constructed a consistent empirical estimator $\widehat{\mathcal{D}}^2_{\mathcal{S}}(\mathbb{X},\mathbb{Y})$ for the $2$--nd MMD $\mathcal{D}^2_{\mathcal{S}}(\mathbb{X},\mathbb{Y})$ out of empirical observations $x^i \sim \mathbb{P}_X$, $i\le m$ and $y^j \sim \mathbb{Q}_{Y}$, $j \le n$ so that $ \widehat{\mathcal{D}}^2_{\mathcal{S}}(\mathbb{X},\mathbb{Y}) \to \mathcal{D}^2_{\mathcal{S}}(\mathbb{X},\mathbb{Y})$ in probability as $m,n \to \infty$, see Theorem \ref{thm: convergence of empirical 2nd MMD}. As a consequence, we obtain a consistent empirical estimator $\widehat{K}^2_{\mathcal{S}}(\mathbb{X},\mathbb{Y}) = \exp(-\sigma^2\widehat{\mathcal{D}}^2_{\mathcal{S}}(\mathbb{X},\mathbb{Y}))$ for the kernel  $K^2_{\mathcal{S}}(\mathbb{X},\mathbb{Y})$ and $\hat{v}(\mathbb{X}) := \sum_{i=1}^M a_i \widehat{K}^2_{\mathcal{S}}(\mathbb{X},\mathbb{Y}^{i})$ can approximate the value $v(\mathbb{X})$ arbitrarily well. Note that the linear regression coefficients $a_i$ can be computed via Kernel Ridge Regression, see Section \ref{sect: summary}.
		\item In view of the algorithm developed in Section \ref{sect: Regression algorithm}, to compute $v(\mathbb{X})$ one only needs to solve a family of linear PDEs of the form 
		$$
		\frac{\partial^2 u_{x,y}}{\partial s \partial t}  = \big\langle \frac{\partial}{\partial t}x_t, \frac{\partial}{\partial s}y_{s}\big\rangle_{\mathbb{R}^d} u_{x,y}
		$$
		for paths $x,y$ taking values in $\mathbb{R}^d$. This completes the proof.
	\end{enumerate}
\end{proof}

\bibliography{references}{}
\bibliographystyle{amsalpha}

\end{document}